\newtheorem{theorem}{Theorem}[subsection]
\newtheorem{proposition}[theorem]{Proposition}
\newtheorem{remark}[theorem]{Remark}
\newtheorem{corollary}[theorem]{Corollary}
\newtheorem{definition}[theorem]{Definition}
\newtheorem{claim}[theorem]{Claim}
\numberwithin{equation}{subsection}
\newcommand{\gr}{\mathrm{gr}}
\newcommand{\yigengrem}[1]{\begin{color}{red}{#1}\end{color}}
\def\ul#1{\underline{#1}}
\def\DRW#1#2#3{W_{#1}\Omega^{#2}_{#3}}
\def\DRWlog#1#2#3{W_{#1}\Omega^{#2}_{#3,\log}}
\def\DRWlognc#1#2#3{\nu_{#1,#3}^{#2}}
\def\Spec{\mathrm{Spec}}
\def\Hom{\mathrm{Hom}}
\def\Supp{\mathrm{Supp}}
\def\cO{\mathcal{O}}
\def\bN{\mathbb{N}}
\def\Fp{\mathbb{F}_p}
\def\Fq{\mathbb{F}_q}
\def\Z{\mathbb{Z}}
\def\Zp1{\mathbb{Z}/p\mathbb{Z}}
\def\Zpm{\Z/p^m\Z}
\def\Zlm{\Z/\ell^m\Z}
\def\D#1{D_{\underline{#1}}}
\def\nlam{(n_{\lambda})_{\lambda\in \Lambda}}
\def\gr{\text{gr}}
\def\Fil{\mathrm{Fil}}
\def\cHom{\mathscr{H}\mathrm{om}}
\def\cRF{W_m\Omega^d_{X|D,\log}}
\def\cAF{W_m\Omega^d_{X,\log}}
\def\et{\text{\'et}}
\def\nis{\text{Nis}}
\def\cF{\mathcal{F}}
\def\m{\mathfrak{m}}
\def\cRM#1{\mathcal{K}^M_{#1, X|D}}
\def\cRFr#1{W_m\Omega^{#1}_{X|D,\log}}
\def\pinP{P\in \mathcal{P}}
\begin{document}

\title[higher ideles and class field theory]{higher ideles and class field theory}

\author{Moritz Kerz}
\address{Fakult\"at f\"ur Mathematik, Universit\"at Regensburg, 93040 Regensburg, Germany}
\email{moritz.kerz@mathematik.uni-regensburg.de}

\author{Yigeng Zhao}
\address{Fakult\"at f\"ur Mathematik, Universit\"at Regensburg, 93040 Regensburg, Germany}
\email{yigeng.zhao@mathematik.uni-regensburg.de}

\thanks{The authors are supported by the DFG through CRC 1085 \emph{Higher Invariants} (Universit\"at Regensburg)}

\begin{abstract}
	We use higher ideles and duality theorems to develop a universal approach to higher dimensional class field theory.
\begin{flushright}
 {\emph{Dedicated to Professor Shuji Saito\\
 on the occasion of his 60th birthday}}
\end{flushright}
\end{abstract}

\keywords{higher idele, class field theory, wild ramification, \'etale duality, Milnor K-group}
	\subjclass[2010]{11G45(primary), 14F20, 14F35, 11R37, 14G17 (secondary)}
\maketitle

\tableofcontents

\setcounter{page}{1}
\setcounter{section}{0}
\pagenumbering{arabic}
\section*{Introduction}

In higher dimensional class field theory one tries to describe the abelian fundamental
group of a scheme $X$ of arithmetic interest in terms of idelic or cycle theoretic data on
$X$. More precisely, assume that $X$ is regular and connected and fix a modulus data,
i.e.~an effective divisor $D$ on $X$. Let $\pi_1^{\rm ab}(X,D)$ be the abelian fundamental
group classifying \'etale coverings with ramification bounded by $D$. One defines an idele
class group $C(X,D)$ which is a quotient of the idele group
\[
I(U\subset X)\coloneqq \bigoplus_{\pinP}K^M_{d(P)}(k(P))
\]
by a modulus subgroup depending on $D$ and certain reciprocity relations. Here $\pinP$
runs through some set of chains of prime ideals and $k(P)$ is a generalized form of
henselian local
residue field at the chain $P$, see Subsection~\ref{subsec:higherideles} and~\cite{kerzideles}.

One then constructs a residue map
\[
\rho: C(X,D) \to \pi_1^{\rm ab}(X,D)
\]
which we show to be an isomorphism after tensoring with $\mathbb Z/n\mathbb Z$ ($n>0$) in
the following situations:

\begin{itemize}
	\item[(i)] $X$ is a smooth proper variety over a finite field, recovering (with simpler
	proof) the main result of~\cite{katosaito} for varieties, see Section~\ref{sec:cftvarieties}.
	\item[(ii)] $X$ is an (equal characteristic) complete regular local ring with finite residue field, recovering in case
	$\dim(X)=2$ results of~\cite{saitoCFT2dim}, recovering in case $n$ is invertible on
	$X$ results of~\cite{satoladiccft} and completing our understanding in case $X$ is of
	equal characteristic $p$ and $n$ is a power of $p$, see
	Section~\ref{sec:cftlocalrings}.
	\item[(iii)] $X$ is a smooth proper scheme over an (equal characteristic) complete discrete valuation ring with
	finite residue field, recovering results of Bloch and Saito, see~\cite{saito85cft}, for $\dim(X)=2$ and results
	of~\cite{forre15} for $n$ invertible on $X$ and completing our understanding in case $X$
	is of characteristic $p$ and $n$ is a power of $p$, see Section~\ref{sec:cftschemeslocal}.
\end{itemize}

Here is an outline of our universal strategy to all three cases of the reciprocity
isomorphism $\rho$ in higher dimensional class
field theory listed above:

\smallskip

{\em Step 1}: Show that $C(X,D)$ is isomorphic to a Nisnevich cohomology group of relative
Milnor $K$-sheaf $\mathcal K^M_{X,D}$, for example in case (i) above one has an
isomorphism
\[
C(X,D) \cong H^d(X_{Nis}, \mathcal K^M_{d,X|D}),
\]
where $d=\dim(X)$.

\smallskip

{\em Step 2}: Show that the Nisnevich cohomology of the relative Milnor $K$-sheaf with
finite coefficients is
isomorphic to a certain analogous \'etale cohomology group, for example in case (i) and
for $n=p^m$ a power of the characteristic $p$ of the base field one has an isomorphism
\[
H^d(X_{\nis}, \mathcal K^M_{d,X|D}/n) \cong H^d(X_{\et} , W_m\Omega^d_{X|D,\log} )
\]
where $ W_m\Omega^d_{X|D,\log}$ is a relative de Rham-Witt sheaf. This isomorphism is
established by comparing coniveau spectral sequences and observing that based on
cohomological dimension arguments there is just one
additional potentially non-vanishing row in the spectral sequence in the \'etale situation, which however
disappears at the end by known cases of the Kato conjecture.

\smallskip

{\em Step 3}: Arithmetic duality tells us that the \'etale cohomology group from Step~2 is
isomorphic to an abelian \'etale fundamental group, for example in the special case as in
Step~2 the pro-finite group
$
\lim_D H^d(X_{\et} , W_m\Omega^d_{X|D,\log} )
$,
where $D$ runs through all effective divisors with a fixed support $X\setminus U$, is
Pontryagin dual to the (discrete) cohomology group $H^1(U_\et , \mathbb Z/n\mathbb Z)$.

\section{Higher ideles and Milnor $K$-sheaves}
\subsection{Higher ideles}\label{subsec:higherideles}
Let $X$ be an integral noetherian scheme with a dimension function $d$. Recall that a dimension function on a scheme $X$ is a set theoretic function $d\colon X \to \Z$ such that
\begin{itemize}
\item[(i)] for all $x\in X$, $d(x)\geq 0$;
\item[(ii)] for $x, y \in X$ with $y \in \overline{\{x\}}$ of codimension one, $d(x)=d(y)+1$, where $\overline{\{x\}}$ denotes the closure of $\{x\}$ in $X$.
\end{itemize}

We also denote $d=d(\eta)$, where $\eta$ is the generic point of $X$. Let $d_m$ be the minimal of the integers $d(x)$ for $x\in X$. For an effective Weil divisor $D$ of $X$, we denote $U=X\setminus D$.

\begin{definition}
\begin{itemize}
\item[(i)] A chain on $X$ is a sequence of points $P=(p_0,p_1,\cdots,p_s)$ of $X$ such that
\[ \overline{\{p_0\}} \subset  \overline{\{p_1\}} \subset \cdots \subset  \overline{\{p_s\}};\]
\item[(ii)] A Parshin chain on $X$ is a chain $P=(p_0,p_1,\cdots,p_s)$ on $X$ such that $d(p_i)=i+d_m$, for $0\leq i \leq s$;
\item[(iii)] A Parshin chain on the pair $(U\subset X)$ is a Parshin chain $P=(p_0,p_1,\cdots,p_s)$ on $X$ such that $p_i\in D$ for $0\leq i<s$ and such that $p_s\in U$.
\item[(iv)] The dimension $d(P)$ of a chain $P=(p_0,p_1,\cdots,p_s)$ is defined to be $d(p_s)$;
\item[(v)] A Q-chain on $(U\subset X)$ is defined as a chain $P=(p_0,\cdots,p_{s-2}, p_s)$ on $X$ for $1\leq s \leq d$, such that $d(p_i)=i+d_m$ for $i\in \{ 0,1,\cdots, s-2,s\}$, $p_i\in D$ for $0\leq i \leq s-2$ and $p_s\in U$.
\end{itemize}
\end{definition}
 We also recall the definition of Milnor $K$-theory.
 \begin{definition}
 \begin{itemize}
 \item[(i)] For a commutative unital ring $R$, the Milnor $K$-ring $K^M_{\bullet}(R)$ of $R$ is the graded ring $T(R^{\times})/I$, where $I$ is the ideal of the tensor algebra $T(R^\times)$ over $R^{\times}$ generated by elements $a\otimes (1-a)$ with $a,1-a\in R^\times$. The image of $a_1\otimes \cdots \otimes a_r$ in $K^M_r(R)$ is denoted by $\{a_1,\cdots,a_r\}$.
 \item[(ii)] If $R$ is a discrete valuation ring with quotient field $K$ and maximal ideal $\m \subset R$ we define $K^M_r(K,n)\subset K^M_r(K)$ be the subgroup generated by $\{1+\m^n, K^\times,\cdots,K^\times \}$ for an integer $n\geq 0$.
 \end{itemize}
 \end{definition}
\begin{definition}Let $P=(p_0,\cdots, p_s)$ be a chain on $X$.
\begin{itemize}
\item[(i)] We define the ring $\cO_{X,P}^h$, which is a finite product of henselian local rings, as follows: If $s=0$ set $\cO_{X,P}^h=\cO_{X,p_0}^h$. If $s>0$ assume that $\cO_{X,P'}^h$ has been defined for chains of the form $P'=(p_0,\cdots, p_{s-1})$. Denote $R=\cO_{X,P'}^h$, let  $T$ be the finite set of prime ideals of $R$ lying over $p_s$. Then we define
\[ \cO_{X,P}^h\coloneqq\prod_{\mathfrak{p}\in T}R_{\mathfrak{p}}^h;\]
\item[(ii)] For a chain $P=(p_0,\cdots, p_s)$ on $X$ we let $k(P)$ be the finite product of the residue fields of  $\cO_{X,P}^h$. If $s\geq 1$ each of these residue fields has a natural discrete valuation such that the product of their rings of integers is equal to the normalization of $\cO_{X,P'}^h/p_s$, where $P'=(p_0,\cdots,p_{s-1})$.
\end{itemize}
\end{definition}

Let $\mathcal{P}$ be the set of Parshin chains on the pair $(U\subset X)$, and let $\mathcal{Q}$ be the set of $Q$-chains on $(U\subset X)$ . For a Parshin chain $P=(p_0,\cdots, p_{d-d_m})\in \mathcal{P}$ of dimension $d$ we denote $D(P)$ the multiplicity of the prime divisor $\overline{\{p_{d-d_m-1}\}}$ in $D$.
\begin{definition}
\begin{itemize}
\item[(i)] The idele class group of $(U\subset X)$ is defined as
\[ I(U\subset X)\coloneqq \bigoplus_{\pinP}K^M_{d(P)}(k(P)), \]
and endow this group with the topology generated by the open subgroups
\[ \bigoplus_{\substack{\pinP\\d(P)=d}}K^M_d(k(P), D(P)) \subset I(U\subset X), \]
where $D$ runs through all effective Weil divisors with support $X\setminus U$;
\item[(ii)] The idele group of $X$ relative to the fixed effective divisor $D$ with complement $U$ is defined as
\[ I(X,D)\coloneqq\mathrm{Coker}(\bigoplus_{\substack{\pinP\\d(P)=d}}K^M_d(k(P), D(P)) \to I(U\subset X)); \]
\item[(iii)]The idele class group $C(U\subset X)$ is
\[ C(U\subset X)\coloneqq\mathrm{Coker}(\bigoplus_{P\in \mathcal{Q}}K^M_{d(P)}(k(P))\xrightarrow{Q} I(U\subset X)), \]
where $Q$ is defined to be the sum of all $Q^{P'\to P}$ for $P'=(p_0,\cdots,p_{s-2},p)\in \mathcal{Q}$ and $P=(p_0,\cdots, p_{s-2},p_{s-1},p_s) \in \mathcal{P}$:
\begin{itemize}
\item if $p_{s-1}\in D$, then $Q^{P'\to P}$ is the natural map $K^M_{d(P')}(k(P'))\to K^M_{d(P)}(k(P)) $ induced on Milnor $K$-groups by the ring homomorphism $k(P')\to k(P)$;
\item if $p_{s-1}\in U$, then $Q^{P'\to P}$ is the residue symbol $K^M_{d(P')}(k(P'))\to K^M_{d(P'')}(k(P'')) $ where $P''=(p_0,\cdots,p_{s-1})$.
\end{itemize}

\item[(iv)] The idele class group $C(X,D)$ of $X$ relative to the effective divisor $D$ is defined as
\[ C(X,D)\coloneqq\mathrm{Coker}(\bigoplus_{P\in \mathcal{Q}}K^M_{d(P)}(k(P))\xrightarrow{Q} I(X,D)). \]
\end{itemize}
\end{definition}
\subsection{Milnor $K$-sheaves}
Let $X$ be an integral scheme. Recall the Milnor $K$-sheaf $\mathcal{K}^M_{*}$ is defined as the Nisnevich sheafification of the presheaf on affine scheme $\Spec(A)$ given as follow: \[ A \mapsto K^M_{\bullet}(A)=\bigoplus_{i\in \mathbb{N}} \underbrace{(A^{\times}\otimes_{\Z}\cdots\otimes_{\Z}A^{\times})}_{\text{$i$ times}}/I,\]
where $I$ is the two-sided ideal of the tensor algebra generated by the elements $a\otimes(1-a)$ with $a,1-a\in A^{\times}$.  This sheaf is closely related to a $p$-primary sheaf if $X$ is of characteristic $p\geq 0$, so-called logarithmic de Rham-Witt sheaf $\DRWlog{m}{r}{X}$ on the small Nisnevich (resp. \'etale) site, which is a subsheaf of $\DRW{m}{r}{X}$ (cf. \cite{illusiederham}) Nisnevich (resp. \'etale) locally generated by $d\log[x_1]_m\wedge\cdots\wedge d\log[x_r]_m$ with $x_i\in \mathcal{O}_X^{\times}$ for all $i$, $d\log[x]_m\coloneqq \frac{d[x]_m}{[x]_m}$ and $[x]_m$ is the Teichm\"uller representative of $x$ in $W_m\mathcal{O}_X$.

These notations can be generalized to a relative situation with respect to a divisor. Let $i\colon D\hookrightarrow X$ be an effective divisor with its complement $j\colon U\coloneqq X\setminus D\hookrightarrow X$.
\begin{definition}
Let  $r\in \mathbb{N}$. We define
\begin{itemize}
\item[(i)](\cite[Definition 2.4]{ruellingsaito}) the relative Milnor $K$-sheaf $\mathcal{K}^M_{r,X|D}$ on the small Nisnevich (resp. \'etale) site is defined to be the   subsheaf of $j_*\mathcal{K}^M_{r,U}$ Nisnevich  (resp. \'etale) locally generated by $\{x_1,\cdots, x_r\}$ with $x_1\in \ker(\mathcal{O}_X^{\times}\to \mathcal{O}_D^{\times})$ and $x_i\in \mathcal{O}_U^{\times}$ for all $i$. Note that if $X$ is a regular scheme over a field, then $\mathcal{K}^M_{r,X|D} \subset \mathcal{K}^M_{r,X} $ by the known Gersten conjecture \cite{kerzgersten} (see also \cite[Corollary 2.9]{ruellingsaito}).
\item[(ii)] (\cite[Definition 1.1.1]{jszduality}) in the case that $X$ is of characteristic $p\geq 0$, the relative logarithmic de Rham-Witt sheaf $\DRWlog{m}{r}{X|D} $  on the small Nisnevich (resp. \'etale) site is the subsheaf of $j_*\DRWlog{m}{r}{U}$ Nisnevich (resp. \'etale) locally generated by $d\log[x_1]_m\wedge\cdots\wedge d\log[x_r]_m$ with $x_1\in \ker(\mathcal{O}_X^{\times}\to \mathcal{O}_D^{\times})$ and $x_i\in \mathcal{O}_U^{\times}$ for all $i$. Similar to the relative Milnor $K$-group, we also have $\DRWlog{m}{r}{X|D}\subset \DRWlog{m}{r}{X} $ in the case that $X$ is a regular scheme.
\end{itemize}
 \end{definition}
 We will show relations between them in a local case, and then we may use these results in different settings. In the following, we fix the notation as follows:  Let $R$ be a henselian regular local ring of characteristic $p>0$ with the residue field $k$. We assume that $k$ is finite. Let $D$ be an effective divisor such that $C:=$ Supp($D$) is a simple normal crossing divisor on $X\coloneqq \Spec(R)$. Let $\{ D_{\lambda}\}_{\lambda\in \Lambda}$ be the (regular) irreducible components of $D$, and let $i_{\lambda}: D_{\lambda} \hookrightarrow X$ be the natural map.

 \begin{theorem}\label{relative.blochkato.general}
  		The $d\log$ map induces an isomorphism of Nisnevich sheaves on $X_{\nis}$
  		\begin{align*}
  		d\log[-]: \mathcal{K}^M_{r,X|D}/(p^m\mathcal{K}^M_{r,X}\cap \mathcal{K}^M_{r,X|D} ) &\xrightarrow{\cong} W_m\Omega^r_{X|D,\log}\\
  		\{x_1,\dots,x_r\} &\mapsto d\log [x_1]_m \wedge \cdots \wedge d\log [x_r]_m. \notag
  		\end{align*}
  	\end{theorem}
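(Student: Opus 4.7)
The plan is to deduce the claim from the classical (absolute) Bloch--Kato--Gabber isomorphism $\mathcal{K}^M_{r,X}/p^m \isom W_m\Omega^r_{X,\log}$ together with the two Gersten-type inclusions $\mathcal{K}^M_{r,X|D} \hookrightarrow \mathcal{K}^M_{r,X}$ and $W_m\Omega^r_{X|D,\log} \hookrightarrow W_m\Omega^r_{X,\log}$ that were recalled right after the definitions of the relative sheaves; both inclusions rely essentially on the regularity of $X$.

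Well-definedness and surjectivity of the proposed map are formal. The classical $d\log$ symbol $\mathcal{K}^M_{r,U} \to W_m\Omega^r_{U,\log}$ (for which the Steinberg relation $d\log[a]_m \wedge d\log[1-a]_m = 0$ is well known) yields, after applying $j_*$, a map $j_*\mathcal{K}^M_{r,U} \to j_*W_m\Omega^r_{U,\log}$. By construction it sends the Nisnevich-local generators $\{x_1,\ldots,x_r\}$ of $\mathcal{K}^M_{r,X|D}$ (those with $x_1 \in \ker(\mathcal{O}_X^\times \to \mathcal{O}_D^\times)$ and $x_i \in \mathcal{O}_U^\times$) precisely onto the generators $d\log[x_1]_m \wedge \cdots \wedge d\log[x_r]_m$ of $W_m\Omega^r_{X|D,\log}$, which both defines the map in question and yields its surjectivity.

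For injectivity I would exploit the commutative square
\[
\begin{CD}
\mathcal{K}^M_{r,X|D} @>d\log>> W_m\Omega^r_{X|D,\log} \\
@VVV @VVV \\
\mathcal{K}^M_{r,X} @>d\log>> W_m\Omega^r_{X,\log}
\end{CD}
\]
whose vertical arrows are injective by the two Gersten-type inclusions already recalled. The absolute Bloch--Kato--Gabber isomorphism identifies the kernel of the bottom $d\log$ with $p^m \mathcal{K}^M_{r,X}$; injectivity of the right-hand vertical then forces the kernel of the top $d\log$ to be exactly $p^m\mathcal{K}^M_{r,X} \cap \mathcal{K}^M_{r,X|D}$, which is precisely the subsheaf by which we are quotienting in the statement. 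Passing to the quotient and combining with surjectivity gives the desired isomorphism. The genuine obstacle here is external to this proof: it lies in the Gersten-type inclusion $W_m\Omega^r_{X|D,\log} \hookrightarrow W_m\Omega^r_{X,\log}$, the logarithmic de Rham--Witt analogue of the corresponding statement for Milnor $K$-theory; granted this inclusion and the absolute Bloch--Kato--Gabber isomorphism, the argument reduces to the diagram chase above.
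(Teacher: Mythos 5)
Your argument is correct and is essentially identical to the paper's own proof: both reduce the claim to the commutative square comparing the relative and absolute sheaves, use the absolute Bloch--Gabber--Kato isomorphism $\mathcal{K}^M_{r,X}/p^m \cong W_m\Omega^r_{X,\log}$ (which at the sheaf level on a regular scheme requires the Gersten resolutions of \cite{kerzgersten} and \cite{grossuwa}, a point the paper makes explicit), and conclude injectivity from the inclusion $W_m\Omega^r_{X|D,\log}\hookrightarrow W_m\Omega^r_{X,\log}$ together with surjectivity onto the local generators. You correctly identify that the real content is deferred to that inclusion of logarithmic de Rham--Witt sheaves, which the paper likewise takes as given from the regularity of $X$.
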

  	\begin{proof}
  		The assertion follows directly by the following commutative diagram
  		\[ \xymatrix{
  			\mathcal{K}^M_{r,X|D}/(p^m\mathcal{K}^M_{r,X}\cap \mathcal{K}^M_{r,X|D} ) \ar@{^(->}[r] \ar@{->>}[d]_{d\log}  &\mathcal{K}^M_{r,X}/p^m \ar[d]^{\cong}_{d\log}\\
  			\DRWlog{m}{r}{X|D} \ar@{^(->}[r] & 	\DRWlog{m}{r}{X},
  		}\]
  	where the right vertical map is an isomorphism by Bloch-Gabber-Kato theorem \cite{blochkato} and Gersten resolutions of $\epsilon_*\mathcal{K}^M_{r,X}$ and $\epsilon_*\DRWlog{m}{r}{X}$ from \cite{kerzgersten} and \cite{grossuwa}, here $\epsilon\colon X_{\nis} \to X_{\mathrm{Zar}}$ is the canonical map.
  	\end{proof}
  In order to study the structure of the relative logarithmic de Rham-Witt sheaves, we introduce some notions here.  We endow $\bN^{\Lambda}$ with a semi-order by
   \[ \underline n:=\nlam \geq \underline {n'}:=(n^{\prime}_{\lambda})_{\lambda\in\Lambda} \; \text{if} \; n_{\lambda} \geq n^{\prime}_{\lambda} \;\text{for all} \; \lambda\in\Lambda.\]
    For $\underline n=\nlam \in \bN^{\Lambda}$ let
   $$
   \D n=\sum\limits_{\lambda\in\Lambda}n_{\lambda}D_{\lambda}
   $$
   be the associated divisor. For $\nu \in \Lambda$ we set
   $ \delta_{\nu}=(0, \dots, 1, \dots, 0 ) \in \bN^{\Lambda}  $, where $1$ is on the $\nu$th place, and we define the following Nisnevich sheaves for $r\geq 1$
   \[  \gr^{\ul n, \nu}\mathcal{K}^M_{r,X}\coloneqq \mathcal{K}^M_{r,X|D_{\ul n}} /\mathcal{K}^M_{r,X|D_{\ul n+\delta_{\nu}}};\]
 \[  \gr^{\ul n, \nu}\DRWlog{m}{r}{X} \coloneqq
 \DRWlog{m}{r}{X|D_{\ul n}}/\DRWlog{m}{r}{X|D_{\ul n+\delta_{\nu}}}.
 \]
 \begin{proposition}{\cite[Proposition 2.10]{ruellingsaito}}\label{grad.MK}
 	Let $\underline n=\nlam \in \bN^{\Lambda}$, and let $\nu \in \Lambda, r\geq 1$. Assume $n_{\nu}=0$ and set
 		\[ D_{\nu, \ul n}:=\sum_{\lambda \in \Lambda\setminus \{\nu\}} n_{\lambda}(D_{\lambda} \cap D_{\nu}).  \]
 		Then there is a natural isomorphism of Nisnevich sheaves
 		\[ \normalfont \text{gr}^{\ul n, \nu} \mathcal{K}^M_{r,X}  \xrightarrow{\cong} i_{\nu,*}\mathcal{K}^M_{r,D_{\nu}|D_{\nu, \ul n}}.\]
 \end{proposition}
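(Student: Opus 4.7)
The plan is to construct the restriction-to-$D_\nu$ map explicitly and then verify that it is an isomorphism of Nisnevich sheaves by a local lifting argument (for surjectivity) and a Gersten-type reduction (for injectivity).

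Working Nisnevich locally at a point of $D_\nu$, pick a regular system of parameters $\{t_\lambda\}$ compatible with $C$, so that $D_\nu = V(t_\nu)$, and, using $n_\nu = 0$, the ideal $I_{D_{\ul n}}$ is the monomial ideal generated by $\prod_{\lambda\ne\nu} t_\lambda^{n_\lambda}$. The surjection $\mathcal{O}_X \twoheadrightarrow \mathcal{O}_{D_\nu}$ sends $I_{D_{\ul n}}$ onto $I_{D_{\nu,\ul n}}$. Hence, for a generating symbol $\{x_1,\dots,x_r\}$ of $\mathcal{K}^M_{r,X|D_{\ul n}}$, with $x_1 \in 1 + I_{D_{\ul n}}$ and $x_i \in \mathcal{O}_U^\times$, the restrictions $(\bar x_1,\dots,\bar x_r)$ define a generating symbol of $\mathcal{K}^M_{r,D_\nu|D_{\nu,\ul n}}$. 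If moreover $x_1 \in 1 + I_{D_{\ul n + \delta_\nu}} = 1 + t_\nu I_{D_{\ul n}}$, then $\bar x_1 = 1$ and the restriction vanishes, so we obtain a well-defined sheaf map $\gr^{\ul n,\nu}\mathcal{K}^M_{r,X} \to i_{\nu,*}\mathcal{K}^M_{r,D_\nu|D_{\nu,\ul n}}$, which is clearly natural.

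Surjectivity should follow by lifting generators: any $y_1 \in 1 + I_{D_{\nu,\ul n}}$ is lifted to $x_1 \in 1 + I_{D_{\ul n}}$ via the monomial description (one chooses any lift of the coefficient in $\bar a \in \mathcal{O}_{D_\nu}$ to $a \in \mathcal{O}_X$), and each remaining unit $y_i$ on $D_\nu \setminus D_{\nu,\ul n}$ can be lifted, after clearing denominators by monomials in the $t_\lambda$ with $n_\lambda \ge 1$, to a unit on $X \setminus D$ in a Nisnevich neighborhood; this uses that $R$ is henselian local to ensure a chosen lift remains a unit.

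Injectivity is where I expect the main obstacle. A local $\xi \in \mathcal{K}^M_{r,X|D_{\ul n}}$ whose restriction vanishes on $D_\nu$ need not obviously lie in $\mathcal{K}^M_{r,X|D_{\ul n + \delta_\nu}}$, because the vanishing may be witnessed by Steinberg relations $\{a,1-a\}=0$ on $D_\nu$ which do not lift naively to $X$. My strategy is to exploit the Gersten conjecture for Milnor $K$-theory \cite{kerzgersten} together with its relative variant to reduce injectivity to the generic point of $D_\nu$, where both sides become graded pieces of the modulus filtration on the Milnor $K$-group of a discrete valuation ring and have a classical description. A complementary hands-on approach is to lift every Steinberg relation on $D_\nu$ to one on $X$ modulo $\mathcal{K}^M_{r,X|D_{\ul n + \delta_\nu}}$: given a lift $\tilde a$ of $\bar a$ one has $\{\tilde a, 1 - \tilde a\} = 0$ in $\mathcal{K}^M_r(\mathcal{O}_X)$, and the discrepancy between $1 - \tilde a$ and any other chosen lift of $1 - \bar a$ lies in $t_\nu \cdot I_{D_{\ul n}}$, so it contributes to the subsheaf $\mathcal{K}^M_{r,X|D_{\ul n + \delta_\nu}}$ which is being killed. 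The combinatorial bookkeeping of these error terms is the delicate part.
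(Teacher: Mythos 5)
The paper does not actually prove this statement: its ``proof'' is a one-line deferral to \cite[Proposition 2.10]{ruellingsaito}, asserting that the argument there works verbatim in the henselian regular local setting. So the real question is whether your sketch would reconstitute that argument, and it does not yet. Your construction of the restriction map and the surjectivity argument are essentially right (modulo being careful that the ``unit'' entries of a symbol for $\mathcal{K}^M_{r,D_\nu|D_{\nu,\ul n}}$ live on $D_\nu\setminus|D_{\nu,\ul n}|$, so their lifts are of the form $\bar u\prod_{\lambda\neq\nu}\bar t_\lambda^{m_\lambda}$ with $\bar u$ a unit, which lift since the local ring is local). But injectivity is the entire content of the proposition, and you explicitly leave it as a plan. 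Of your two strategies, the first is circular: reducing to the generic point of $D_\nu$ requires knowing that the graded quotient sheaf $\gr^{\ul n,\nu}\mathcal{K}^M_{r,X}$ itself injects into its value at that generic point, which is a Gersten-type statement for the quotient that does not follow formally from \cite{kerzgersten} applied to $\mathcal{K}^M_{r,X}$ or to the relative sheaves.

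The second strategy is the right one in spirit --- it amounts to constructing a well-defined inverse map by lifting symbols and controlling the dependence on the lifts --- but your error-term analysis is incorrect as stated. Two lifts of the \emph{first} entry $\bar x_1\in 1+I_{D_{\nu,\ul n}}$ do differ by an element of $t_\nu A\cap I_{D_{\ul n}}=I_{D_{\ul n+\delta_\nu}}$ (using that $t_\nu$ and $\prod_{\lambda\neq\nu}t_\lambda^{n_\lambda}$ form a regular sequence), so that entry is fine. But two lifts of a \emph{unit} entry differ multiplicatively by an element of $1+t_\nu A$ only, not $1+t_\nu I_{D_{\ul n}}$, and to absorb the resulting symbol $\{1+fa,\,1+t_\nu b,\dots\}$ (with $f=\prod_{\lambda\neq\nu}t_\lambda^{n_\lambda}$) into $\mathcal{K}^M_{r,X|D_{\ul n+\delta_\nu}}$ you need the Bloch--Kato-type identity $\{1+ab,b\}=-\{1+ab,-a\}$ and its consequences for products of ideals. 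The same identities are what let you lift Steinberg relations from $D_\nu$. This symbol calculus is precisely the content of the lemmas preceding \cite[Proposition 2.10]{ruellingsaito}; it is a real argument, not routine bookkeeping, and as written your proposal neither carries it out nor correctly isolates where the lift-discrepancies land.
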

\begin{proof}
	The argument in \cite{ruellingsaito} works verbatim for our case.
\end{proof}
\begin{theorem}\label{relative.blochkato}
	If $D$ is reduced, then  $d\log$ induces an isomorphism of Nisnevich sheaves
	\begin{align*}
	d\log[-]: \mathcal{K}^M_{r,X|D}/p^m &\xrightarrow{\cong} W_m\Omega^r_{X|D,\log}\\
	\{x_1,\dots,x_r\} &\mapsto d\log [x_1]_m \wedge \cdots \wedge d\log [x_r]_m. \notag
	\end{align*}
\end{theorem}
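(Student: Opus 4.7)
The plan is to reduce to Theorem \ref{relative.blochkato.general} by checking that, when $D$ is reduced, the natural surjection
\[
\mathcal{K}^M_{r,X|D}/p^m \twoheadrightarrow \mathcal{K}^M_{r,X|D}\big/\bigl(p^m\mathcal{K}^M_{r,X}\cap\mathcal{K}^M_{r,X|D}\bigr)
\]
is an isomorphism, equivalently that the natural map $\mathcal{K}^M_{r,X|D}/p^m\to \mathcal{K}^M_{r,X}/p^m$ is injective. Setting $Q\coloneqq \mathcal{K}^M_{r,X}/\mathcal{K}^M_{r,X|D}$ and applying the snake lemma to multiplication by $p^m$ on the short exact sequence $0\to \mathcal{K}^M_{r,X|D}\to \mathcal{K}^M_{r,X}\to Q\to 0$, the kernel of this map is a quotient of $Q[p^m]$. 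Hence it suffices to show that $Q$ has no $p$-torsion.

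The argument then uses Proposition \ref{grad.MK} to build a finite filtration of $Q$ whose graded pieces are easier to analyze. Since $D$ is reduced, $D=D_{\ul 1}$ with $\ul 1=\sum_{\lambda}\delta_\lambda$. Choose an enumeration $\Lambda=\{\lambda_1,\dots,\lambda_s\}$ and set $\ul n_i=\ul 1-\sum_{j\le i}\delta_{\lambda_j}$, yielding a chain
\[
\mathcal{K}^M_{r,X|D_{\ul n_0}}\subset \mathcal{K}^M_{r,X|D_{\ul n_1}}\subset\cdots\subset \mathcal{K}^M_{r,X|D_{\ul n_s}}=\mathcal{K}^M_{r,X}
\]
whose $i$-th graded piece is, by Proposition \ref{grad.MK}, the Nisnevich sheaf $i_{\lambda_{i+1},*}\mathcal{K}^M_{r,D_{\lambda_{i+1}}|D_{\lambda_{i+1},\ul n_{i+1}}}$ (note that the $\lambda_{i+1}$-component of $\ul n_{i+1}$ is zero by construction). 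Since the class of $p$-torsion free abelian sheaves is closed under extensions, it is enough to show each graded piece is $p$-torsion free.

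Each component $D_\lambda$ is a regular integral scheme over $\Fp$, so by Kerz's proof of the Gersten conjecture for Milnor $K$-theory on regular schemes containing a field, the sheaf $\mathcal{K}^M_{r,D_\lambda}$ embeds into the constant sheaf $K^M_r(k(D_\lambda))$ at the generic point. Izhboldin's theorem guarantees that $K^M_r(k(D_\lambda))$ is $p$-torsion free, whence $\mathcal{K}^M_{r,D_\lambda}$ and its subsheaves $\mathcal{K}^M_{r,D_\lambda|D_{\lambda,\ul n}}$ are $p$-torsion free as well. The main nontrivial input is Izhboldin's theorem; the remainder is a routine diagram chase assembling the results already established in the excerpt.
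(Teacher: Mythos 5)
Your proof is correct and follows essentially the same route as the paper: both reduce to the injectivity of $\mathcal{K}^M_{r,X|D}/p^m \to \mathcal{K}^M_{r,X}/p^m$, apply the snake lemma to reduce to the $p$-torsion-freeness of $\mathcal{K}^M_{r,X}/\mathcal{K}^M_{r,X|D}$, and treat the graded pieces of the filtration from Proposition~\ref{grad.MK} via the Gersten resolution together with the $p$-torsion-freeness of Milnor $K$-groups of characteristic-$p$ fields (Izhboldin's theorem, which the paper cites through Geisser--Levine). The only differences are presentational: you spell out the filtration explicitly and phrase the first step as a reduction to Theorem~\ref{relative.blochkato.general} rather than redrawing the $d\log$ diagram.
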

\begin{proof}
	By the commutative diagram
	\[ \xymatrix{
		\mathcal{K}^M_{r,X|D}/p^m \ar[r] \ar@{->>}[d]_{d\log}  &\mathcal{K}^M_{r,X}/p^m \ar[d]^{\cong}_{d\log}\\
		\DRWlog{m}{r}{X|D} \ar@{^(->}[r] & 	\DRWlog{m}{r}{X}
	}\]
	it is enough to show that $	\mathcal{K}^M_{r,X|D}/p^m \hookrightarrow \mathcal{K}^M_{r,X}/p^m $.
	On the other hand, we have the following commutative diagram:
	\[ \xymatrix {
		0 \ar[r] & 	\mathcal{K}^M_{r,X|D}\ar[r]\ar[d]^{p^m} & \mathcal{K}^M_{r,X} \ar[r]\ar[d]^{p^m} & \mathcal{K}^M_{r,X}/\mathcal{K}^M_{r,X|D}\ar[r]\ar[d]^{p^m} & 0 \\
		0 \ar[r] & 	\mathcal{K}^M_{r,X|D}\ar[r] & \mathcal{K}^M_{r,X} \ar[r] & \mathcal{K}^M_{r,X}/\mathcal{K}^M_{r,X|D}\ar[r] & 0.
	}\]
	Combining the fact \cite[Theorem 8.1]{geisserlevine} and the Gersten resolution \cite{kerzgersten}, we know that $ \mathcal{K}^M_{r,X}$ is $p$-torsion free. Therefore the middle vertical map is injective, so is the first vertical map.
	By the snake lemme, it is sufficient to check that the third vertical map	$p^m: \mathcal{K}^M_{r,X}/\mathcal{K}^M_{r,X|D} \to \mathcal{K}^M_{r,X}/\mathcal{K}^M_{r,X|D}$ is injective. This follows from the above Proposition \ref{grad.MK}, by noting that $\mathcal{K}^M_{r,X}/\mathcal{K}^M_{r,X|D} $ is a successive extension of sheaves $\gr^{\ul n, \nu} \mathcal{K}^M_{r,X} $ and the map $p^m: i_{\nu,*}\mathcal{K}^M_{r,D_{\nu}|D_{\nu, \ul n}} \to i_{\nu,*}\mathcal{K}^M_{r,D_{\nu}|D_{\nu, \ul n}}$ is injective (similar to the injectivity of the first vertical map in above diagram). We remark that the assumption in Proposition \ref{grad.MK} is satisfied, since $D$ is reduced.
\end{proof}

 \begin{proposition}{\cite[Proposition 1.1.9]{jszduality}}\label{gr.log.form}
Let $X,D$ be as above. Then we have
 \begin{itemize}
 \item[(i)] $W_m\Omega_{X,\log}^d=W_m\Omega_{X|D_{\mathrm{red}},\log}^d$;
 \item[(ii)] for $\ul{n}\geq \ul{1}$, the quotient
 $ \normalfont \gr^{\underline{n}, \nu}\DRWlog{m}{r}{X}$ is a coherent $\cO_{D_{\nu}}^{p^e}$-module, for some $e>>0$.
 \end{itemize}
 \end{proposition}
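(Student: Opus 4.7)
My plan is to treat the two statements separately, reducing (i) to a Milnor $K$-theoretic divisibility question handled by iterating Proposition~\ref{grad.MK}, and treating (ii) by appeal to an explicit description of the wild graded pieces of the de Rham--Witt modulus filtration combined with the interaction of Frobenius with this filtration.

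For (i), the Bloch--Gabber--Kato isomorphism $d\log\colon \mathcal{K}^M_{d,X}/p^m \xrightarrow{\cong} W_m\Omega^d_{X,\log}$ together with Theorem~\ref{relative.blochkato} applied to the reduced divisor $D_{\mathrm{red}}$ reduces the equality of sheaves to the surjectivity of $\mathcal{K}^M_{d,X|D_{\mathrm{red}}}/p^m \to \mathcal{K}^M_{d,X}/p^m$, equivalently to the $p^m$-divisibility of the quotient $Q := \mathcal{K}^M_{d,X}/\mathcal{K}^M_{d,X|D_{\mathrm{red}}}$. Iterating Proposition~\ref{grad.MK} over the poset $\{0,1\}^{\Lambda}$, $Q$ carries a finite filtration whose graded pieces are of the form $i_{\nu,*}\mathcal{K}^M_{d, D_\nu | D_{\nu, \underline{n}}}$ with $n_\nu = 0$. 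Each such piece embeds into $\mathcal{K}^M_{d, D_\nu}$, which is uniquely $p^m$-divisible: it is $p$-torsion free by \cite[Theorem~8.1]{geisserlevine} and the Bloch--Gabber--Kato isomorphism gives $\mathcal{K}^M_{d, D_\nu}/p^m \cong W_m\Omega^d_{D_\nu,\log}=0$ since $\dim D_\nu = d-1$. A second iteration of Proposition~\ref{grad.MK} on $D_\nu$ shows that the ambient quotient $\mathcal{K}^M_{d, D_\nu}/\mathcal{K}^M_{d, D_\nu | D_{\nu, \underline{n}}}$ is $p$-torsion free, as an iterated extension of subsheaves of Milnor $K$-sheaves on strata of dimension $\leq d-2$. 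A short diagram chase then forces $\mathcal{K}^M_{d, D_\nu | D_{\nu, \underline{n}}}$ itself to be $p^m$-divisible, so $Q$ is $p^m$-divisible and (i) follows.

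For (ii) the strategy is different, since Proposition~\ref{grad.MK} does not address the wild regime $n_\nu \geq 1$. Here I plan to invoke the explicit description from~\cite{jszduality} of the modulus filtration on the (non-log) sheaf $W_m\Omega^r_X$, whose graded pieces are coherent $\mathcal{O}_{D_\nu}$-modules built from K\"ahler differentials on $D_\nu$ and the Verschiebung operator. The log subsheaf $W_m\Omega^r_{X|D_{\underline n},\log}$ can then be cut out as the $(1-F)$-kernel inside $W_m\Omega^r_{X|D_{\underline n}}$ on the \'etale site. Passing to the associated graded and tracking how the Frobenius $F$ shifts the modulus weights, one finds that after $e$ iterations (with $e$ depending on $m$ and $\underline n$) the $\mathcal{O}_{D_\nu}$-action on the non-log graded piece is forced to factor through the $p^e$-th power map. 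Consequently the graded piece $\gr^{\underline n,\nu}W_m\Omega^r_{X,\log}$ inherits a natural $\mathcal{O}_{D_\nu}^{p^e}$-module structure, and coherence is transported from the $\mathcal{O}_{D_\nu}$-coherent non-log side.

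The hard part is (ii): one must keep track precisely of how Frobenius acts on the modulus filtration to determine the correct exponent $e$, and verify that the $(1-F)$-fixed points genuinely produce a coherent $\mathcal{O}_{D_\nu}^{p^e}$-submodule of the non-log graded piece rather than merely an abelian subgroup. Once the explicit comparison between the wild graded pieces of $W_m\Omega^r_{X,\log}$ and Frobenius-twisted differential forms on $D_\nu$ is in place, both the coherence and the module structure are formal consequences.
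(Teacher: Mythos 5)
Your argument for part (i) is correct and is in substance the paper's own: the paper also filters $\mathcal{K}^M_{d,X}/\mathcal{K}^M_{d,X|D_{\mathrm{red}}}$ by Proposition~\ref{grad.MK} and kills each graded piece mod $p^m$ by the dimension vanishing $W_m\Omega^d_{D_\nu,\log}=0$, except that it invokes Theorem~\ref{relative.blochkato} on $D_\nu$ to get $i_{\nu,*}\mathcal{K}^M_{d,D_\nu|D_{\nu,\ul n}}/p^m\cong i_{\nu,*}\DRWlog{m}{d}{D_\nu|D_{\nu,\ul n}}=0$ in one step, whereas you re-run the torsion-freeness mechanism (Geisser--Levine plus Gersten, closure of $p$-torsion-freeness under the extensions produced by a second application of Proposition~\ref{grad.MK}, then the snake lemma) that is already the engine inside the proof of Theorem~\ref{relative.blochkato}. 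That part is fine and complete.

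Part (ii) is where your proposal has a genuine gap. Note first that the paper does not prove (ii) at all: it cites \cite[Proposition 1.1.9]{jszduality} and only argues that the computation there (done for henselizations of local rings of smooth schemes, ultimately by explicit symbol calculations in the style of \cite[(4.7),(4.8)]{blochkato}) goes through verbatim for a henselian regular local ring. Your proposed route is different -- you want to realize $\DRWlog{m}{r}{X|D_{\ul n}}$ as $\ker(1-F)$ inside $\FDRW{m}{r}{\ul n}$ on the \'etale site and then pass to graded pieces -- and the two steps you defer are exactly the mathematical content. Concretely: (a) even granting the relative Artin--Schreier-type sequence of \cite[Theorem 1.2.1]{jszduality}, taking associated graded does not commute with taking $(1-F)$-kernels; one needs a snake-lemma analysis with control of the cokernel of $1-F$ on the sub- and quotient objects, and this is not addressed; (b) the assertion that tracking how $F$ shifts modulus weights "forces" the $\cO_{D_\nu}$-action on the log graded piece to factor through $p^e$-th powers is precisely the statement to be proved, and no mechanism is given for why a subgroup cut out by $1-F$ (an additive, non-$\cO$-linear operator) is an $\cO_{D_\nu}^{p^e}$-submodule rather than merely an abelian subgroup -- a worry you yourself raise but do not resolve. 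In the cited source this is handled instead by exhibiting explicit coherent presentations of the graded pieces by differential forms on $D_\nu$ (where the relations such as $d\log(1+\pi^na)\mapsto$ terms in $a^p$ make the $\cO^{p^e}$-structure visible directly), so as written your part (ii) is a plan rather than a proof.
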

\begin{proof}
	In the case that $d=1$ (i.e., $R$ is a discrete valuation ring), the assertions have been given in \cite[(4.7),(4.8)]{blochkato}.
	For general $d$, in \cite{jszduality}, the graded pieces have been studied in the case that $R$ is the henselization of a local ring of a smooth scheme over $k$. But note that the argument also works in our setting. We only need to show (i).  By Theorem \ref{relative.blochkato}, we see that, for $\ul{n}<\ul{1}$, \[ \gr^{\ul n, \nu} \mathcal{K}^M_{d,X} /p^m \cong i_{\nu,*}\mathcal{K}^M_{d,D_{\nu}|D_{\nu, \ul n}}/p^m = i_{\nu,*}\DRWlog{m}{d}{D_{\nu}|D_{\nu, \ul n}}=0,\]
	where the vanishing is by dimension.
\end{proof}

\section{Class field theory for proper varieties over finite fields}\label{sec:cftvarieties}
\bigskip
In this section we reprove the main results of the class field theory of smooth proper
varieties over finite fields with ramification along divisors $D$, which originally are due to Kato-Saito \cite{katosaito}.

Let $X$ be a smooth proper variety of dimension $d$ over a finite field $k$,  let $D$ be an effective divisor such that $C:=$ Supp($D$) is a simple normal crossing divisor on $X$, and let $j: U:=X-C \hookrightarrow X$ be the complement of $C$. Let $\{ D_{\lambda}\}_{\lambda\in \Lambda}$ be the (smooth) irreducible components of $D$, and let $i_{\lambda}: D_{\lambda} \hookrightarrow X$ be the natural map. We use the dimension function $d(x)=\mathrm{dim}(\overline{\{x\}})$ for $x\in X$. We also denote $X_r\coloneqq \{ x\in X|\  d(x)=r\}$ the set of points of dimension $r$ of $X$ and $X^r\coloneqq X_{d-r}$ the set of points of codimension $r$ of $X$.

\subsection{Idele class groups}  The $K$-theoretic class group $H^d(X_{\nis}, \mathcal{K}_{d,X|D}^M)$ is introduced by Kato-Saito in \cite{katosaito}, and they also give an idelic description of the dual of this class group.  In \cite{kerzideles}, we give a direct description of this class group, and prove the following theorem.
\begin{theorem}{(\cite[Theorem 8.4]{kerzideles})}\label{higheridele}
There exists a unique isomorphism
\[ \rho_{X,D}\colon C(X,D) \cong H^d(X_{\nis}, \mathcal{K}^M_{d, X|D})\]
such that the following triangle commutes
\[ \xymatrix{
&\bigoplus_{x\in X_0}\Z \ar[dl]_{\imath} \ar[dr]^-{\imath_{\nis}}&\\
C(X,D)\ar[rr]^-{\rho_{X,D}}&& H^d(X_{\nis}, \mathcal{K}^M_{d, X|D}),
}\]
where $\imath$ is the obvious map, and $\imath_{\nis}$ is the map from \cite[Theorem 2.5]{katosaito}.
\end{theorem}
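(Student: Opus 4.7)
The plan is to identify both $C(X,D)$ and $H^d(X_{\nis},\mathcal{K}^M_{d,X|D})$ as quotients of the same free abelian group $\bigoplus_{x\in X_0}\Z$, and then match the defining relations on the two sides. Uniqueness of $\rho_{X,D}$ comes essentially for free: the map $\imath$ is surjective, because for any Parshin chain $P=(p_0,\dots,p_d)$ of maximal dimension and any symbol $\{a_1,\dots,a_d\}\in K^M_d(k(P))$, iterated application of the $Q$-chain relations (taking successive residues along the flag $p_d,p_{d-1},\dots$) rewrites its class in $C(X,D)$ as an integral combination of closed-point classes. Any two maps $\rho_{X,D}$ compatible with $\imath$ and $\imath_{\nis}$ must therefore agree.

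To get at the right hand side, I would use the Gersten resolution of $\mathcal{K}^M_{d,X|D}$ on $X_{\nis}$. Combining the Gersten conjecture for Milnor K-theory \cite{kerzgersten} with its relative version used in \cite{ruellingsaito}, this sheaf admits a flasque resolution of length $d$ whose top global term is $\bigoplus_{x\in X_0}\Z$. The resulting complex presents $H^d(X_{\nis},\mathcal{K}^M_{d,X|D})$ as a cokernel at the right end and realizes the Kato--Saito map $\imath_{\nis}$ as its edge map, in particular as a surjection.

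For the construction of $\rho_{X,D}$ I would first define a map out of the idele group $I(U\subset X)$: for each Parshin chain $P=(p_0,\dots,p_s)$ and each symbol in $K^M_{d(P)}(k(P))$, use the henselian local structure of $\cO^h_{X,P}$ to produce a class in the local cohomology $H^{d(P)}_{\{p_0\}}(X_{\nis},\mathcal{K}^M_{d,X|D})$, and then push forward to $H^d$ via the coniveau filtration (for $d(P)<d$ the resulting class is eventually killed by a Gersten boundary). Descent to $C(X,D)$ requires two compatibilities: (a) vanishing on the modulus subgroup $K^M_d(k(P),D(P))$, which should come from the definition of $\mathcal{K}^M_{d,X|D}$ as the subsheaf generated by symbols with $x_1\in\ker(\cO_X^\times\to\cO_D^\times)$; and (b) vanishing on the $Q$-chain relations, which amounts to matching the $Q$-differentials term-by-term with the Gersten boundaries at each Parshin chain.

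Bijectivity then follows in two steps. Surjectivity is automatic from the common surjection from $\bigoplus_{x\in X_0}\Z$ onto the two groups. For injectivity I would compare the presentations of the two groups stratum by stratum, reducing to a local calculation at each link of a Parshin chain. I expect step (b) above to be the principal obstacle, and indeed the arithmetic heart of the theorem: matching the $Q$-chain relations with Gersten differentials at every codimension in the henselian setting is exactly where the definition of $k(P)$ as a finite product of henselian (rather than Zariski) local residue fields earns its keep, and where a careful analysis of symbols in Milnor K-theory of higher-dimensional henselian local rings, resting on \cite{kerzgersten}, is indispensable.
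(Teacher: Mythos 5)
The paper does not actually prove Theorem~\ref{higheridele}: it is imported verbatim from \cite[Theorem 8.4]{kerzideles}, so the only meaningful comparison is with the argument in that reference. At the level of architecture your outline does agree with it: both groups are exhibited as quotients of $\bigoplus_{x\in X_0}\Z$ --- the right-hand side via the degeneration of the coniveau/Gersten resolution of $\mathcal{K}^M_{d,X|D}$, which is exactly how the sequence~(\ref{nis.ladic.exact}) arises and which gives the surjectivity of $\imath_{\nis}$ for free --- and $\rho_{X,D}$ is assembled from local cohomology classes attached to Parshin chains, with the $Q$-chain relations matched against the Gersten differentials.

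Two points in your write-up are genuine gaps rather than mere deferrals. First, your mechanism for the surjectivity of $\imath$ does not work: the $Q$-chain relations are reciprocity relations (the sum over \emph{all} completions of a given $Q$-chain vanishes), not rewriting rules, so ``taking successive residues along the flag'' does not reduce a symbol at a maximal Parshin chain to closed points. Already for $d=1$ the unique $Q$-chain is $(\eta)$ and its relation says that a principal idele is trivial; to express a local symbol at $p\in D$ by closed points one must first replace it by a global element using weak approximation modulo the modulus subgroup $K^M_d(k(P),D(P))$. The clean route is the reverse of yours: deduce surjectivity of $\imath$ (hence uniqueness) from that of $\imath_{\nis}$ once $\rho_{X,D}$ is known to be an isomorphism. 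Second, everything that makes the theorem true sits in the steps you label (a), (b) and ``injectivity'': the computation of the local cohomology of $\mathcal{K}^M_{d,X|D}$ along a Parshin chain in terms of $K^M_{d(P)}(k(P))$ and its modulus subgroup (this is where the henselian rings $\cO^h_{X,P}$ and the Gersten conjecture of \cite{kerzgersten} actually enter), and the term-by-term compatibility of the two kinds of maps $Q^{P'\to P}$ with the Gersten boundary. You correctly identify these as the heart of the matter, but as written they are named rather than proved, so the proposal is a faithful roadmap of \cite{kerzideles} rather than a proof.
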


\subsection{The $\ell$-primary part}\label{l-part.over.finite fields}

In this subsection, we study the group $H^d(X_{\nis},\mathcal{K}^M_{d, X|D})/\ell^m$, and compare it with $H^{2d}(X_{\et}, j_!\mu_{\ell^m}^{\otimes d})$.

The coniveau spectral sequence for an abelian \'etale (resp. Nisnevich) sheaf $\cF $ on $X_{\et}$ (resp. $X_{\nis}$)  writes
\[  E^{p,q}_{1,\et}(\cF):=\bigoplus_{x\in X^p} H^{p+q}_x(X_{\et}, \cF)\Longrightarrow H^{p+q}(X_{\et}, \cF) \]
\[  E^{p,q}_{1,\nis}(\cF):=\bigoplus_{x\in X^p} H^{p+q}_x(X_{\nis}, \cF)\Longrightarrow H^{p+q}(X_{\nis}, \cF) ,\]
where $X^p$ is the set of points of codimension $p$ of $X$.
Note that the degeneration of the coniveau spectral sequence due to cohomological dimension (cf. \cite[1.2.5]{katosaito}) for $\cRM{d} $ on $X_{\nis}$ gives rise to a short exact sequence
\begin{equation}\label{nis.ladic.exact}
 \bigoplus_{x\in X^{d-1}}H^{d-1}_x(X_{\nis}, \cRM{d})\to \bigoplus_{x\in X^{d}}H^{d}_x(X_{\nis}, \cRM{d}) \to H^{d}(X_{\nis}, \cRM{d}) \to 0.
\end{equation}
We now study the coniveau spectral sequence for $j_!\mu_{\ell^m}^{\otimes d}$ on $X_{\et}$
\begin{proposition}\label{ladic.top}
Let $X$ be a smooth (not necessarily proper) variety over a finite field of dimension $d$.  For any $x\in X^a$, we have \[ H^{a+d+1}_x(X_{\et}, j_!\mu_{\ell^m}^{\otimes d})=H^{a+d+1}_x(X_{\et},\mu_{\ell^m}^{\otimes d}),\]i.e., $E_{1,\et}^{\bullet,d+1}(j_!\mu_{\ell^m}^{\otimes d})=E_{1,\et}^{\bullet,d+1}(\mu_{\ell^m}^{\otimes d})$. In particular, we have $E_{2,\et}^{d-2,d+1}(j_!\mu_{\ell^m}^{\otimes d})=E_{2,\et}^{d-1,d+1}(j_!\mu_{\ell^m}^{\otimes d})=0$.
\end{proposition}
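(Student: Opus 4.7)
The plan is to compare $j_!\mu_{\ell^m}^{\otimes d}$ with $\mu_{\ell^m}^{\otimes d}$ via the short exact sequence
\[ 0 \to j_!\mu_{\ell^m}^{\otimes d} \to \mu_{\ell^m}^{\otimes d} \to i_*\mu_{\ell^m}^{\otimes d} \to 0, \]
where $i\colon D \hookrightarrow X$ is the closed complement of $j$. Passing to the long exact sequence of local cohomology at a point $x \in X^a$, it suffices to prove the vanishing
\[ H^{a+d}_x(X_{\et}, i_*\mu_{\ell^m}^{\otimes d}) = H^{a+d+1}_x(X_{\et}, i_*\mu_{\ell^m}^{\otimes d}) = 0. \]
Both the claimed pointwise equality and the $E_{1,\et}^{\bullet,d+1}$-identification then follow, since the coniveau $E_1$-terms are a direct sum of such local cohomology groups.

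If $x \notin D$, then $i_*\mu_{\ell^m}^{\otimes d}$ vanishes already on the henselization $\Spec(\cO^h_{X,x})$, so its local cohomology at $x$ vanishes trivially. In the main case $x \in D$, I would use the identification $H^j_x(X_{\et}, i_*\mu_{\ell^m}^{\otimes d}) \cong H^j_x(D_{\et}, \mu_{\ell^m}^{\otimes d})$ (which holds because $i_*$ is exact and preserves injectives on \'etale sites) and resolve $\mu_{\ell^m}^{\otimes d}|_D$ by the \v{C}ech complex attached to the closed cover $D = \bigcup_\lambda D_\lambda$. This yields a spectral sequence
\[ E_1^{p,q} = \bigoplus_{|I|=p+1,\,x\in D_I} H^q_x(D_I, \mu_{\ell^m}^{\otimes d}) \Longrightarrow H^{p+q}_x(D, \mu_{\ell^m}^{\otimes d}). \]
Here $D_I = \bigcap_{\lambda\in I} D_\lambda$ is smooth over $k$ of dimension $d-1-p$ by the SNC assumption, and $x$ has codimension $a-1-p$ in $D_I$. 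Absolute cohomological purity (Gabber, with $\ell \neq \Char k$) then gives
\[ H^q_x(D_I, \mu_{\ell^m}^{\otimes d}) \cong H^{q-2(a-1-p)}(k(x), \mu_{\ell^m}^{\otimes (d-a+1+p)}). \]
Since $k(x)$ is finitely generated over the finite base field of transcendence degree $d-a$, one has $\mathrm{cd}_\ell(k(x)) = d-a+1$, and hence $E_1^{p,q}=0$ as soon as $q > d+a-1-2p$. An elementary check shows that every pair $(p,q)$ with $p \ge 0$ and $p+q \in \{a+d, a+d+1\}$ satisfies this strict inequality, killing all contributions to both abutments.

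The ``in particular'' assertion is then formal: the $E_1$-identification is induced by the natural map of sheaves, so it is compatible with the residue differentials $d_1$ and passes to $E_2$-terms; the vanishing $E_{2,\et}^{p,d+1}(\mu_{\ell^m}^{\otimes d}) = 0$ for $p < d$ is then the known case of the Kato conjecture for smooth proper varieties over finite fields (Jannsen--Saito, Kerz--Saito) alluded to in the introduction. The main technical work is the purity-plus-cohomological-dimension bookkeeping on the intersections $D_I$; the one point I would verify carefully is the exactness of the \v{C}ech resolution attached to the SNC closed cover, which at the level of stalks reduces to the contractibility of the augmented chain complex of a simplex.
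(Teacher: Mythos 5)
Your proof is correct, but it takes a genuinely different route from the paper's. The paper establishes $H^{a+d+1}_x(X_{\et}, j_!\mu_{\ell^m}^{\otimes d})\cong H^{a+d+1}_x(X_{\et},\mu_{\ell^m}^{\otimes d})$ by induction on the codimension $a$: the base case $a=1$ uses that the closed complement of $U$ meets $\Spec(\mathcal{O}^h_{X,x})$ only in the closed point, and the inductive step compares the presentations of $H^{a+d+1}_x$ as a cokernel coming from the coniveau spectral sequence on $Y_x=\Spec(\mathcal{O}^h_{X,x})\setminus\{x\}$, the localization sequence, and the bound $\mathrm{cd}_{\ell}(k(x))\le d+1-\mathrm{codim}_X(x)$. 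You instead use the excision sequence $0\to j_!\mu_{\ell^m}^{\otimes d}\to \mu_{\ell^m}^{\otimes d}\to i_*\mu_{\ell^m}^{\otimes d}\to 0$ and kill the two relevant local cohomology groups of the third term outright, via the Mayer--Vietoris resolution attached to the decomposition of $\mathrm{Supp}(D)$ into its smooth components, purity on each stratum $D_I$, and the same cohomological dimension bound. This gives a clean, induction-free argument; the price is a heavier reliance on the SNC hypothesis (smoothness of the $D_I$) and on the exactness of the closed-cover resolution, both of which you correctly flag and both of which do hold (the latter reduces at the level of stalks, as you say, to exactness of the augmented cochain complex of a nonempty simplex, since the index set $\{\lambda: y\in D_\lambda\}$ is nonempty for $y$ in the support). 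Your numerics check out: $E_1^{p,q}=0$ for $q>d+a-1-2p$, and both total degrees $a+d$ and $a+d+1$ lie in this range for every $p\ge 0$, and the case $x\notin\mathrm{Supp}(D)$ is handled correctly. For the ``in particular'' statement both you and the paper reduce, via compatibility of the $E_1$-row isomorphism with $d_1$, to the known Kato conjecture; the only caveat is that the vanishing must be invoked for the possibly non-proper smooth variety $X$ (as in the paper's citation of Kerz--Saito), not merely for smooth proper varieties as your last paragraph suggests.
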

\begin{proof}
We prove the first claim by induction on the codimension $a$. For $x\in X^a$, we denote $X_x=\Spec(\cO_{X,x}^{h})$ the henselization of $X$ at $x$, and $Y_x=X_x\setminus\{x\}$.
If $a=1$, then any divisor of $X_x$ must have support in the closed point $\{x\}$. Therefore  \[ j_!\mu_{\ell^m}^{\otimes d}|_{Y_x} = \mu_{\ell^m}^{\otimes d}|_{Y_x} \] by the definition of $j_!$. Using the localization exact sequences twice, we obtain
\[ H^{d+2}_x(X_{\et}, j_!\mu_{\ell^m}^{\otimes d})\cong H^{d+1}(Y_{x,\et},j_!\mu_{\ell^m}^{\otimes d})=H^{d+1}(Y_{x,\et},\mu_{\ell^m}^{\otimes d}) \cong H^{d+2}_x(X_{\et},\mu_{\ell^m}^{\otimes d}),\]
where the first isomorphism is due to $j_!\mu_{\ell^{m}}^{\otimes d}|_x=0$, and the second isomorphism is by the vanishing $H^{d+2}(X_{x,\et},\mu_{\ell^{m}}^{\otimes d})\cong H^{d+2}(x_{\et},\mu_{\ell^{m}}^{\otimes d})=0=H^{d+1}(x_{\et},\mu_{\ell^{m}}^{\otimes d})\cong H^{d+1}(X_{x,\et},\mu_{\ell^{m}}^{\otimes d})$, where we use the fact that $\mathrm{cd}_{\ell}(x)\leq d+1-\mathrm{codim}_X(x)$ (cf. \cite[Lemma 4.2(1)]{satoladiccft}).

For general codimension $a>1$, the coniveau spectral sequence on $Y_x$ and cohomological vanishing give us an exact sequence
\begin{align}
 \bigoplus_{y\in Y_x^{a-2}}H^{a+d-1}_y(Y_{x,\et},j_!\mu_{\ell^m}^{\otimes d}) \to \bigoplus_{y\in Y_x^{a-1}}H^{a+d}_y(Y_{x,\et},j_!\mu_{\ell^m}^{\otimes d}) \to H^{a+d}(Y_{x,\et}, j_!\mu_{\ell^m}^{\otimes d}) \to 0.
\end{align}
On the other hand, the localization exact sequence for $j_!\mu_{\ell^m}^{\otimes d}$ on $X_x$ tells us
\begin{align}\label{ladic.long.exact}
H^{a+d}(Y_{x,\et}, j_!\mu_{\ell^m}^{\otimes d}) \cong H^{a+d+1}_x(X_{x,\et}, j_!\mu_{\ell^m}^{\otimes d}),
\end{align}
Indeed due  to $\mathrm{cd}_{\ell}(x)\leq d+1-\mathrm{codim}_X(x)$ we have $$H^{a+d}(X_{x,\et}, j_!\mu_{\ell^m}^{\otimes d} )=0=H^{a+d+1}(X_{x,\et}, j_!\mu_{\ell^m}^{\otimes d} ).$$

Combining these facts, we get the following diagram with exact rows
\[ \xymatrix@C=1em{
\bigoplus\limits_{y\in Y_x^{a-2}}H^{a+d-1}_y(Y_{x,\et},j_!\mu_{\ell^m}^{\otimes d}) \ar[r]\ar[d] &  \bigoplus\limits_{y\in Y_x^{a-1}}H^{a+d}_y(Y_{x,\et},j_!\mu_{\ell^m}^{\otimes d}) \ar[r]\ar[d] &H^{a+d+1}_x(X_{x,\et},j_!\mu_{\ell^m}^{\otimes d}) \ar[r]\ar[d] &0\\
\bigoplus\limits_{y\in X_x^{a-2}}H^{a+d-1}_y(X_{x,\et},\mu_{\ell^m}^{\otimes d}) \ar[r] & \bigoplus\limits_{y\in X_x^{a-1}}H^{a+d}_y(X_{x,\et},\mu_{\ell^m}^{\otimes d}) \ar[r] &H^{a+d+1}_x(X_{x,\et},\mu_{\ell^m}^{\otimes d}) \ar[r] &0
}\]
The first two vertical maps are isomorphisms by induction. Hence the third vertical arrow is also an isomorphism. Thanks to \cite[Theorem 3.5.1]{jssduality}, we see that the complex $E_{1,\et}^{\bullet,d+1}(\mu_{\ell^m}^{\otimes d})$ is the Kato complex of $\mu_{\ell^m}^{\otimes d}$  (cf. \cite[(0.2)]{kerzsaitoIHES}) up to a sign.  By the known Kato conjecture on vanishing of cohomology groups of this complex  at places $d-1$ and $d-2$
(cf. \cite[Theorem 8.1]{kerzsaitoIHES}) we obtain the second part of Proposition~\ref{ladic.top}.
\end{proof}

\begin{corollary}\label{et.ladic.exact}
We have the following exact sequence
\begin{equation*}
\bigoplus_{x\in X^{d-1}}H^{2d-1}_x(X_{\et}, j_!\mu_{\ell^m}^{\otimes d})\to \bigoplus_{x\in X^{d}}H^{2d}_x(X_{\et}, j_!\mu_{\ell^m}^{\otimes d}) \to H^{2d}(X_{\et}, j_!\mu_{\ell^m}^{\otimes d})\to 0.
\end{equation*}
\end{corollary}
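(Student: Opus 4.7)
The plan is to derive the claimed exact sequence from the coniveau spectral sequence $E^{p,q}_{1,\et}(j_!\mu_{\ell^m}^{\otimes d}) \Rightarrow H^{p+q}(X_\et, j_!\mu_{\ell^m}^{\otimes d})$ introduced above. The argument parallels the Nisnevich one leading to~(\ref{nis.ladic.exact}), but requires Proposition~\ref{ladic.top} as extra input to kill a potentially non-vanishing row that does not appear in the Nisnevich case.

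First, I would show that the $E_1$-page is supported in rows $0 \le q \le d+1$, so that at total degree $2d$ only the subquotients $E^{d,d}_\infty$ and $E^{d-1,d+1}_\infty$ can contribute to $H^{2d}$. For a point $x \in U \cap X^p$ this follows from absolute cohomological purity together with the estimate $\mathrm{cd}_\ell(k(x)) \le d-p+1$ from \cite[Lemma 4.2(1)]{satoladiccft}, exactly as invoked in the proof of Proposition~\ref{ladic.top}. For $x \in D \cap X^p$ one uses the defining exact sequence
\[
0 \longrightarrow j_!\mu_{\ell^m}^{\otimes d} \longrightarrow \mu_{\ell^m}^{\otimes d} \longrightarrow i_*\bigl(\mu_{\ell^m}^{\otimes d}|_D\bigr) \longrightarrow 0
\]
and the analogous cohomological dimension bound on the $(d-1)$-dimensional subscheme $D$ to conclude the same vanishing.

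Now Proposition~\ref{ladic.top} gives $E^{d-1,d+1}_2 = 0$, hence $E^{d-1,d+1}_\infty = 0$, so $H^{2d}(X_\et, j_!\mu_{\ell^m}^{\otimes d}) \cong E^{d,d}_\infty$. Next I would identify $E^{d,d}_\infty$ with $E^{d,d}_2$: the outgoing differentials $d_r\colon E^{d,d}_r \to E^{d+r, d-r+1}_r$ vanish because there are no points of codimension $>d$, while the only incoming differential whose source is not already killed by the row bound is $d_2\colon E^{d-2,d+1}_2 \to E^{d,d}_2$, which is also zero by Proposition~\ref{ladic.top}. Therefore
\[
H^{2d}(X_\et, j_!\mu_{\ell^m}^{\otimes d}) \;\cong\; E^{d,d}_2 \;=\; \mathrm{coker}\bigl(d_1\colon E^{d-1,d}_1 \to E^{d,d}_1\bigr),
\]
which is exactly the asserted three-term exact sequence.

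The only step requiring genuine care is the initial cohomological-dimension bound for $j_!\mu_{\ell^m}^{\otimes d}$ at points on $D$: one must confirm that the $j_!$-extension does not introduce any new local cohomology in degree $>p+d+1$ beyond what the smooth case provides. Everything else is a bookkeeping exercise on the $E_r$-pages, with Proposition~\ref{ladic.top} used precisely where cohomological dimension alone is insufficient.
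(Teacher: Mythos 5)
Your argument is correct and is exactly the paper's proof, merely written out in full: the paper's one-line justification ``$E_{2,\et}^{d,d}(j_!\mu_{\ell^m}^{\otimes d})=H^{2d}(X_{\et}, j_!\mu_{\ell^m}^{\otimes d})$'' rests on precisely the same ingredients you spell out, namely the row bound $q\le d+1$ coming from $\mathrm{cd}_{\ell}(x)\le d+1-\mathrm{codim}_X(x)$ and the two vanishings $E_{2,\et}^{d-1,d+1}=E_{2,\et}^{d-2,d+1}=0$ of Proposition~\ref{ladic.top}, which respectively kill the extra contribution to total degree $2d$ and the only possibly nonzero incoming differential into $E_2^{d,d}$.
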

\begin{proof}
By the above proposition, we have $E_{2,\et}^{d,d}(j_!\mu_{\ell^m}^{\otimes d})=H^{2d}(X_{\et}, j_!\mu_{\ell^m}^{\otimes d})$.
\end{proof}

Using the Galois symbol maps and induction on codimension, Sato constructs the localized Chern class map and proves the following theorem.

\begin{theorem}[{\cite[Theorem 1.2 and \S 3]{satoladiccft}}]\label{Sato.ladic}
For any $x\in X^a$, there exists a canonical surjective map \[ \mathrm{cl}_{X,D,x,\ell^m}^{d,\mathrm{loc}} \colon H^a_x(X_{\nis}, \cRM{d})/\ell^m \twoheadrightarrow H^{d+a}_x(X_{\et}, j_!\mu_{\ell^m}^{\otimes d}),\]
which is called localized Chern class map. Moreover, if $x\in X^d$, the localized Chern class map
\[ \mathrm{cl}_{X,D,x,\ell^m}^{d,\mathrm{loc}} \colon H^d_x(X_{\nis}, \cRM{d})/\ell^m \xrightarrow{\cong} H^{2d}_x(X_{\et}, j_!\mu_{\ell^m}^{\otimes d})\]
is bijective.
\end{theorem}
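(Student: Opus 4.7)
The plan is to construct the localized Chern class map at a point $x \in X^a$ by working locally on the henselization $X_x := \Spec(\cO_{X,x}^h)$, combining the Gersten resolution of the relative Milnor $K$-sheaf on the Nisnevich side with the coniveau spectral sequence for $j_!\mu_{\ell^m}^{\otimes d}$ on the \'etale side, and matching the two term-by-term via the Galois symbol (Bloch--Kato/Rost--Voevodsky). Throughout I assume $\ell$ is invertible on $X$, which is the natural hypothesis in this subsection.

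\textbf{Construction and surjectivity by induction on $a$.} For $a=0$ the map is the classical Galois symbol $K^M_d(k(\eta))/\ell^m \to H^d(k(\eta),\mu_{\ell^m}^{\otimes d})$ on the generic point, which is an isomorphism by Bloch--Kato. For the inductive step, given $x\in X^a$ with $a\geq 1$, I would pass to $X_x$ and invoke the localization exact sequence in \'etale cohomology
\[
H^{a+d-1}(Y_{x,\et}, j_!\mu_{\ell^m}^{\otimes d})\to H^{a+d}_x(X_{x,\et}, j_!\mu_{\ell^m}^{\otimes d})\to H^{a+d}(X_{x,\et}, j_!\mu_{\ell^m}^{\otimes d})
\]
together with its Nisnevich analogue for $\cRM{d}$. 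Cohomological dimension arguments (exactly as in the proof of Proposition~\ref{ladic.top}) force the right-hand term to vanish, while the coniveau spectral sequence on $Y_x = X_x\setminus\{x\}$ computes the remaining group in terms of local cohomology at points of codimension less than $a$ in $X_x$, which is covered by the inductive hypothesis. The Gersten resolution of \cite{kerzgersten} and its relative refinement in \cite{ruellingsaito} provide a parallel description on the Nisnevich side in terms of modulus-twisted Milnor $K$-groups. Galois symbols on each residue field then patch together to produce $\mathrm{cl}^{d,\mathrm{loc}}_{X,D,x,\ell^m}$, and surjectivity of the Galois symbol on each term immediately gives the asserted surjectivity.

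\textbf{Bijectivity for $x\in X^d$ and main obstacle.} For a closed point $x$ the extra work is bijectivity. If $x\in U$, then $j_!\mu_{\ell^m}^{\otimes d}$ agrees with $\mu_{\ell^m}^{\otimes d}$ in an \'etale neighbourhood of $x$, and the statement reduces by absolute cohomological purity to the Bloch--Kato isomorphism $K^M_d(\cO_{X,x}^h)/\ell^m \xrightarrow{\cong} H^d(\cO_{X,x}^h,\mu_{\ell^m}^{\otimes d})$ for a henselian regular local ring with finite residue field. If $x\in D$, I would filter $\cRM{d}$ by the sub-sheaves $\mathcal{K}^M_{d,X|D_{\ul n}}$ and exploit Proposition~\ref{grad.MK} to reduce to the graded pieces, which are pushforwards of modulus-twisted Milnor $K$-sheaves on the components $D_\nu$; the corresponding graded pieces of $j_!\mu_{\ell^m}^{\otimes d}$ along $D$ vanish because of the extension by zero in $j_!$, so both filtrations collapse at the closed point and the remaining comparison is again Bloch--Kato on the finite field $k(x)$. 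The main technical obstacle is compatibility of the Gersten residue maps on the Milnor $K$-side with the boundary maps in the \'etale coniveau spectral sequence for $j_!\mu_{\ell^m}^{\otimes d}$: this is forced by naturality but has to be verified by hand, most cleanly via reduction to a two-dimensional henselian regular local ring along a simple normal crossings divisor, from which the general case follows by induction on $\dim X$.
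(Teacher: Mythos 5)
First, a point of orientation: the paper does not prove Theorem~\ref{Sato.ladic} at all --- it is imported from Sato \cite{satoladiccft}, with only the one-line indication that the construction uses ``Galois symbol maps and induction on codimension''. Your outline is consistent in spirit with that description, but judged as a proof it has a concrete gap in the inductive step for surjectivity. For $x\in X^a\cap D$ one has $H^{a+d}_x(X_{x,\et}, j_!\mu_{\ell^m}^{\otimes d})\cong H^{a+d-1}(Y_{x,\et}, j_!\mu_{\ell^m}^{\otimes d})$, and the image of your patched Galois symbols (coming from points of codimension $a-1$ on $Y_x$) lies in $F^{a-1}H^{a+d-1}(Y_{x,\et},j_!\mu_{\ell^m}^{\otimes d})$ for the coniveau filtration. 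But in total degree $a+d-1$ the coniveau spectral sequence on $Y_x$ also has the top-row term $E^{a-2,d+1}_1=\bigoplus_{y\in Y_x^{a-2}}H^{a+d-1}_y(Y_{x,\et},j_!\mu_{\ell^m}^{\otimes d})$, so that $H^{a+d-1}(Y_{x,\et},j_!\mu_{\ell^m}^{\otimes d})/F^{a-1}\cong E^{a-2,d+1}_\infty$, which is supported in codimension $a-2$ and is \emph{not} reached by your inductive hypothesis (that hypothesis only concerns the $q=d$ row). Surjectivity therefore requires the vanishing of these top-row $E_2$-terms on the punctured henselian local schemes $Y_x$ --- a local exactness statement for the Kato complex, resting on Bloch--Kato--Gabber and Gersten-type results --- which your sketch never invokes. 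This is exactly the kind of input the paper does supply in the global analogue (Proposition~\ref{ladic.top} via \cite{kerzsaitoIHES}), and it cannot be absorbed into ``covered by the inductive hypothesis''.

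Two further defects are worth naming. (a) The well-definedness of the patched map, i.e.\ the compatibility of the Milnor $K$-theoretic residue maps with the boundary maps of the \'etale coniveau spectral sequence for the $j_!$-modified sheaf, is the technical heart of Sato's construction (and of \cite{jssduality}); you correctly identify it as the main obstacle but then defer it, so the map is never actually constructed. (b) The argument for bijectivity at closed points $x\in D$ is garbled: there is no filtration of $j_!\mu_{\ell^m}^{\otimes d}$ indexed by the multiplicities $\ul n$, so ``the corresponding graded pieces of $j_!\mu_{\ell^m}^{\otimes d}$ vanish'' has no meaning. The correct mechanism lives entirely on the $K$-theory side: the graded pieces $\gr^{\ul n,\nu}\mathcal K^M_{d,X}$ for $\ul n\geq \ul 1$ are filtered by coherent sheaves in characteristic $p$ (cf.\ Proposition~\ref{gr.log.form} and \cite{ruellingsaito}), hence are uniquely $\ell$-divisible and disappear modulo $\ell^m$; this reduces to $D$ reduced, after which one must still compare $H^d_x(X_{\nis},\mathcal K^M_{d,X|D_{\mathrm{red}}})/\ell^m$ with $H^{2d}_x(X_{\et},j_!\mu_{\ell^m}^{\otimes d})$ --- for $d=1$ this amounts to $1+\m_x\subset (K^\times)^{\ell^m}$ for the henselian discretely valued field at $x$, which is where $\ell\neq p$ genuinely enters and which your reduction does not reach.
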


\begin{corollary}\label{ladic.nis=et}
There is a canonical isomorphism
 \begin{equation*}
 H^d(X_{\nis},\cRM{d})/\ell^m \cong H^{2d}(X_{\et}, j_!\mu_{\ell^m}^{\otimes d}).
 \end{equation*}
\end{corollary}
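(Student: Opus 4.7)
The plan is to compare the two right-exact sequences (\ref{nis.ladic.exact}) and Corollary~\ref{et.ladic.exact} via the localised Chern class maps of Theorem~\ref{Sato.ladic}, and conclude by a four-lemma diagram chase.

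First I would apply $-\otimes_{\mathbb{Z}}\mathbb{Z}/\ell^m\mathbb{Z}$ to (\ref{nis.ladic.exact}). Since tensoring with $\mathbb{Z}/\ell^m\mathbb{Z}$ is right exact, this yields the exact sequence
\[
\bigoplus_{x\in X^{d-1}} H^{d-1}_x(X_{\nis},\cRM{d})/\ell^m \to \bigoplus_{x\in X^{d}} H^{d}_x(X_{\nis},\cRM{d})/\ell^m \to H^{d}(X_{\nis},\cRM{d})/\ell^m \to 0.
\]

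Next I would place this alongside the exact sequence of Corollary~\ref{et.ladic.exact} and link the two by the direct sums of the localised Chern class maps $\mathrm{cl}^{d,\mathrm{loc}}_{X,D,x,\ell^m}$ of Theorem~\ref{Sato.ladic} for $x\in X^{d-1}$ and $x\in X^{d}$, together with the induced arrow on the cokernels. Commutativity of this ladder reduces to the compatibility of Sato's localised Chern class maps with the $d_1$-differentials of the coniveau spectral sequences on $X_{\nis}$ and $X_{\et}$; this compatibility is essentially built into the construction in \cite{satoladiccft} (the Chern class maps are defined by induction on codimension using the very localisation sequences that assemble to the coniveau spectral sequence).

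Finally, Theorem~\ref{Sato.ladic} gives that the vertical map over $X^{d-1}$ is surjective and the one over $X^{d}$ is bijective. A standard four-lemma chase on the diagram
\[
\xymatrix{
\bigoplus\limits_{x\in X^{d-1}} H^{d-1}_x(X_{\nis},\cRM{d})/\ell^m \ar[r]\ar@{->>}[d] & \bigoplus\limits_{x\in X^{d}} H^{d}_x(X_{\nis},\cRM{d})/\ell^m \ar[r]\ar[d]^{\cong} & H^{d}(X_{\nis},\cRM{d})/\ell^m \ar[r]\ar[d] & 0\\
\bigoplus\limits_{x\in X^{d-1}} H^{2d-1}_x(X_{\et}, j_!\mu_{\ell^m}^{\otimes d}) \ar[r] & \bigoplus\limits_{x\in X^{d}} H^{2d}_x(X_{\et}, j_!\mu_{\ell^m}^{\otimes d}) \ar[r] & H^{2d}(X_{\et}, j_!\mu_{\ell^m}^{\otimes d}) \ar[r] & 0
}
\]
then forces the rightmost arrow to be bijective, yielding the desired canonical isomorphism $H^d(X_{\nis},\cRM{d})/\ell^m \cong H^{2d}(X_{\et}, j_!\mu_{\ell^m}^{\otimes d})$.

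The only non-routine point is the commutativity of the ladder, i.e.\ the compatibility of $\mathrm{cl}^{d,\mathrm{loc}}$ with the coniveau $d_1$'s; once that is granted, everything else is formal. All other inputs, namely the two exact sequences and the statement of Theorem~\ref{Sato.ladic} (which itself relies on the Kato conjecture in the degree~$d{-}1$, $d{-}2$ range via Proposition~\ref{ladic.top}), have already been set up earlier in the section.
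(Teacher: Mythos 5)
Your proposal is correct and follows essentially the same route as the paper: tensoring the degenerate Nisnevich coniveau sequence with $\Z/\ell^m\Z$, comparing it to the \'etale sequence of Corollary~\ref{et.ladic.exact} via the localized Chern class maps of Theorem~\ref{Sato.ladic}, and concluding by a diagram chase using the surjectivity in codimension $d-1$ and bijectivity in codimension $d$. Your explicit remark that commutativity of the ladder rests on the compatibility of Sato's construction with the coniveau $d_1$-differentials is a point the paper leaves implicit.
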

\begin{proof}
We have the following commutative diagram with exact rows:
\[ \xymatrix@C=3mm{ \bigoplus\limits_{x\in X^{d-1}}H^{d-1}_x(X_{\nis}, \cRM{d})/\ell^m \ar[r]\ar@{->>}[d]^-{\mathrm{cl}_{X,D,x,\ell^m}^{d,\mathrm{loc}}}&\bigoplus\limits_{x\in X^{d}}H^{d}_x(X_{\nis}, \cRM{d})/\ell^m \ar[r]\ar[d]^-{\mathrm{cl}_{X,D,x,\ell^m}^{d,\mathrm{loc}}}_-{\cong}& H^{d}(X_{\nis}, \cRM{d})/\ell^m \ar[r]\ar@{-->}[d]& 0\\
\bigoplus\limits_{x\in X^{d-1}}H^{2d-1}_x(X_{\et}, j_!\mu_{\ell^m}^{\otimes d})\ar[r] &\bigoplus\limits_{x\in X^{d}}H^{2d}_x(X_{\et}, j_!\mu_{\ell^m}^{\otimes d}) \ar[r] &H^{2d}(X_{\et}, j_!\mu_{\ell^m}^{\otimes d})\ar[r] &0,
}\]
where the first exact row follows from the exact sequence (\ref{nis.ladic.exact}) by tensoring with $\Z/\ell^m\Z$, the second is Corollary \ref{et.ladic.exact}.  By Theorem \ref{Sato.ladic} the first vertical arrow is surjective and the second is bijective. Then the assertion follows from an easy diagram chasing.
\end{proof}
\begin{theorem}[{\cite[Lemma 2.9]{saito1989}}]
There is a perfect pairing of finite $\Z/\ell^m\Z$-modules
\begin{equation*}
\normalfont H^i(U_{\et}, \mu_{\ell^m}^{\otimes r}) \times H^{2d+1-i}(X_{\et}, j_!\mu_{\ell^m}^{\otimes d-r}) \to H^{2d+1}(X_{\et}, j_!\mu_{\ell^m}^{\otimes d}) \xrightarrow{\cong} \Z/\ell^m\Z.
\end{equation*}
\end{theorem}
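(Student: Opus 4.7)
The plan is to recognize the claimed pairing as the standard arithmetic Poincar\'e--Lefschetz duality for the smooth variety $U$ over the finite field $k$, combined with the proper compactification $j\colon U\hookrightarrow X$ that lets us rewrite compactly supported cohomology of $U$ as ordinary cohomology of $X$ with coefficients in $j_!$.

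I would first construct the pairing intrinsically via cup product. Writing $H^i(U_\et,\mu_{\ell^m}^{\otimes r}) = H^i(X_\et, Rj_*\mu_{\ell^m}^{\otimes r})$ and using the projection formula for the open immersion $j$ (recall $j^* Rj_* = \mathrm{id}$), one obtains a canonical isomorphism $Rj_*\mu_{\ell^m}^{\otimes r}\otimes^L j_!\mu_{\ell^m}^{\otimes d-r} \cong j_!\mu_{\ell^m}^{\otimes d}$, and cup product delivers the pairing landing in $H^{2d+1}(X_\et, j_!\mu_{\ell^m}^{\otimes d})$. The trace isomorphism $H^{2d+1}(X_\et, j_!\mu_{\ell^m}^{\otimes d})\cong \Z/\ell^m\Z$ then drops out of the short exact sequence $0\to j_!\mu_{\ell^m}^{\otimes d}\to \mu_{\ell^m}^{\otimes d}\to i_*i^*\mu_{\ell^m}^{\otimes d}\to 0$, the usual trace for the smooth proper $X/k$, and the vanishing $H^q(C_\et, i^*\mu_{\ell^m}^{\otimes d})=0$ for $q\geq 2d$, which holds because $C=\Supp(D)$ has dimension $\leq d-1$ over the finite field $k$ and hence $\ell$-cohomological dimension at most $2d-1$.

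For perfectness, I appeal to arithmetic Poincar\'e duality for the smooth proper $X/k$ of dimension $d$: for every constructible $\ell^m$-torsion sheaf $\cF$ on $X_\et$ the natural pairing
\[ H^i(X_\et,\cF)\times \mathrm{Ext}^{2d+1-i}_{X_\et}(\cF, \mu_{\ell^m}^{\otimes d}) \longrightarrow H^{2d+1}(X_\et,\mu_{\ell^m}^{\otimes d})\cong \Z/\ell^m\Z \]
is a perfect pairing of finite groups. Specializing to $\cF = j_!\mu_{\ell^m}^{\otimes d-r}$ and using the adjunction $j_!\dashv j^*$ together with the exactness of $j^*$, one computes $R\sHom_{X_\et}(j_!\mu_{\ell^m}^{\otimes d-r},\mu_{\ell^m}^{\otimes d}) \cong Rj_*\mu_{\ell^m}^{\otimes r}$, whence $\mathrm{Ext}^{2d+1-i}_{X_\et}(j_!\mu_{\ell^m}^{\otimes d-r},\mu_{\ell^m}^{\otimes d}) \cong H^{2d+1-i}(U_\et,\mu_{\ell^m}^{\otimes r})$. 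After relabeling $i \leftrightarrow 2d+1-i$, this is exactly the pairing of the theorem, which is therefore perfect with finite terms.

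The genuinely nontrivial input is the arithmetic Poincar\'e duality quoted above, which combines geometric Poincar\'e duality over $\bar k$ with Tate local duality for the finite field $k$; everything else is formal (projection formula, adjunction, a cohomological-dimension count on $C$). The subtlest bookkeeping point is verifying that the cup-product pairing constructed at the outset agrees up to sign with the Yoneda/$\mathrm{Ext}$-pairing extracted from duality; this is a standard compatibility following from the unit--counit identities of $j_!\dashv j^*$ together with the projection-formula isomorphism used in the construction of the pairing.
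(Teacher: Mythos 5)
Your proposal is correct; the paper gives no argument of its own here, simply citing \cite[Lemma 2.9]{saito1989}, and your derivation --- the cup-product pairing via the projection formula $j_!\mu_{\ell^m}^{\otimes d-r}\otimes^L Rj_*\mu_{\ell^m}^{\otimes r}\cong j_!\mu_{\ell^m}^{\otimes d}$, the trace isomorphism from the localization sequence together with $\mathrm{cd}_\ell(C)\leq 2d-1$, and perfectness from arithmetic Poincar\'e duality for the smooth proper $X$ over the finite field specialized to $\mathcal{F}=j_!\mu_{\ell^m}^{\otimes d-r}$ via the adjunction $R\sHom_X(j_!\mu_{\ell^m}^{\otimes d-r},\mu_{\ell^m}^{\otimes d})\cong Rj_*\mu_{\ell^m}^{\otimes r}$ --- is precisely the standard argument underlying that reference. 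No gaps beyond the sign/compatibility bookkeeping you already flag as standard.
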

In particular, in case $i=1, r=0$,  we obtain
\begin{equation}
H^d(X_{\et},j_!\mu_{\ell^m}^{\otimes d})/\ell^m \cong \pi^{\mathrm{ab}}_1(U)/\ell^m.
\end{equation}
In summary:
\begin{corollary}\label{ladic_cft}
We obtain canonical isomorphisms
\begin{equation*}
\normalfont C(X,D)/\ell^m\stackrel{\rho_{X,D}}{\cong} H^d(X_{\nis},\mathcal{K}^M_{d, X|D})/\ell^m \cong \pi^{\mathrm{ab}}_1(U)/\ell^m.
\end{equation*}
\end{corollary}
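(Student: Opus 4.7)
The statement is really a synthesis corollary that stacks previously-established isomorphisms, so the plan is to identify each link in the chain and verify that they concatenate.

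\textbf{First isomorphism.} The identification $C(X,D)/\ell^m \cong H^d(X_{\nis},\mathcal{K}^M_{d,X|D})/\ell^m$ is immediate: Theorem \ref{higheridele} already furnishes a canonical isomorphism $\rho_{X,D}\colon C(X,D) \isom H^d(X_{\nis},\mathcal{K}^M_{d,X|D})$ on the nose, and this is preserved under $-\otimes_{\Z}\Z/\ell^m\Z$. So there is nothing additional to do here.

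\textbf{Second isomorphism.} The plan is to go from Nisnevich Milnor cohomology to $\ell$-adic \'etale cohomology to the abelianised fundamental group in two steps. First, Corollary \ref{ladic.nis=et} gives a canonical isomorphism
\[
H^d(X_{\nis},\mathcal{K}^M_{d,X|D})/\ell^m \;\cong\; H^{2d}(X_{\et},j_!\mu_{\ell^m}^{\otimes d}).
\]
Then one invokes the Saito duality pairing stated just above in case $i=1$, $r=0$, which yields a perfect pairing of finite $\Z/\ell^m\Z$-modules
\[
H^{1}(U_{\et},\Z/\ell^m\Z) \times H^{2d}(X_{\et},j_!\mu_{\ell^m}^{\otimes d}) \to \Z/\ell^m\Z.
\]
Since $H^{1}(U_{\et},\Z/\ell^m\Z)=\Hom_{\mathrm{cont}}(\pi_1^{\mathrm{ab}}(U),\Z/\ell^m\Z) = \Hom(\pi_1^{\mathrm{ab}}(U)/\ell^m,\Z/\ell^m\Z)$, applying Pontryagin duality to the pairing produces
\[
H^{2d}(X_{\et},j_!\mu_{\ell^m}^{\otimes d}) \;\cong\; \pi_1^{\mathrm{ab}}(U)/\ell^m.
\]
Composing the two isomorphisms finishes the proof.

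\textbf{Main obstacle.} There is no genuine obstacle left in this corollary: all the heavy lifting has been carried out earlier (Theorem \ref{higheridele}, the coniveau and Kato conjecture argument underlying Corollary \ref{ladic.nis=et}, and Saito's duality). The only item requiring a moment's care is the passage from the perfect pairing to the identification with $\pi_1^{\mathrm{ab}}(U)/\ell^m$, which relies on finiteness of the groups in question so that Pontryagin duality converts $\Hom(-,\Z/\ell^m\Z)^{\vee}$ back into the original finite $\ell^m$-torsion quotient. Once this bookkeeping is made explicit, the corollary follows by simply concatenating the isomorphisms described above.
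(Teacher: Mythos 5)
Your proposal is correct and follows exactly the paper's route: Theorem \ref{higheridele} for the first isomorphism, Corollary \ref{ladic.nis=et} for the passage to $H^{2d}(X_{\et},j_!\mu_{\ell^m}^{\otimes d})$, and the stated duality pairing with $i=1$, $r=0$ combined with Pontryagin duality of finite $\Z/\ell^m\Z$-modules to identify that group with $\pi_1^{\mathrm{ab}}(U)/\ell^m$. Your reading of the dual group as $H^{2d}(X_{\et},j_!\mu_{\ell^m}^{\otimes d})$ is the correct one (the paper's displayed consequence of the duality theorem writes $H^d(\cdots)/\ell^m$, which is a typo), so nothing further is needed.
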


\subsection{The $p$-primary part}
\bigskip
In this subsection we want to compare the group $H^d(X_{\nis}, \cRM{d})/p^m$ with the group $H^d(X_{\et}, \cRF)$.

The coniveau spectral sequence for a $p$-primary \'etale (resp. Nisnevich) sheaf $\cF $ on $X_{\et}$ (resp. $X_{\nis}$)  writes
\[  E^{p,q}_{1,\et}(\cF):=\bigoplus_{x\in X^p} H^{p+q}_x(X_{\et}, \cF)\Longrightarrow H^{p+q}(X_{\et}, \cF) \]
\[  E^{p,q}_{1,\nis}(\cF):=\bigoplus_{x\in X^p} H^{p+q}_x(X_{\nis}, \cF)\Longrightarrow H^{p+q}(X_{\nis}, \cF) .\]

We know that $E^{p,q}_{1,\et}(\cF)=0$ if $q>1 $ or $p>d$, and $E^{p,q}_{1,\nis}(\cF)=0$ if $q>0$ or $p>d$.

\begin{theorem}\label{nis=et}
The canonical map \[ \normalfont H^d(X_{\nis}, \cRF)\xrightarrow{\cong} H^d(X_{\et}, \cRF)\]
is an isomorphism.
\end{theorem}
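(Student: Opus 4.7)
The proof parallels the $\ell$-primary argument of Corollary~\ref{ladic.nis=et}: I would compare the coniveau spectral sequences
\[
E^{p,q}_{1,\tau}(\cRF) = \bigoplus_{x\in X^p} H^{p+q}_x(X_{\tau},\cRF) \Longrightarrow H^{p+q}(X_{\tau},\cRF),\qquad \tau\in\{\nis,\et\},
\]
for the sheaf $\cRF = \DRWlog{m}{d}{X|D}$. On the Nisnevich side, the vanishing of Nisnevich cohomology of henselian local rings in positive degrees collapses the spectral sequence onto the row $q=0$, yielding, in analogy with~\eqref{nis.ladic.exact},
\[
\bigoplus_{x\in X^{d-1}} H^{d-1}_x(X_\nis,\cRF) \to \bigoplus_{x\in X^{d}} H^{d}_x(X_\nis,\cRF) \to H^{d}(X_\nis,\cRF) \to 0.
\]
On the étale side, the bound $\mathrm{cd}_p(k(x))\le 1$ for a field of characteristic $p$ forces $H^{p+q}_x(X_\et,\cRF)=0$ for $q\ge 2$, so only the rows $q=0,1$ survive.

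The first step is to match the bottom rows $q=0$. Combining the Gersten resolutions of $\cRM{d}/p^m$ from~\cite{kerzgersten} and of $\DRWlog{m}{d}{X}$ from~\cite{grossuwa}, Theorem~\ref{relative.blochkato} (in the reduced case), and the devissage of Proposition~\ref{grad.MK} together with Proposition~\ref{gr.log.form}(ii) (to handle non-reduced multiplicities), one gets a canonical isomorphism $H^p_x(X_\nis,\cRF)\cong H^p_x(X_\et,\cRF)$ for each $x$, compatible with the coniveau differentials. In particular $E^{d,0}_{2,\et}(\cRF) \cong H^d(X_\nis,\cRF)$.

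The second, and main, step is to show that the row $q=1$ does not contribute to $H^d(X_\et,\cRF)$, i.e., $E^{d-1,1}_{2,\et}(\cRF)=0=E^{d-2,1}_{2,\et}(\cRF)$. Following the strategy of Proposition~\ref{ladic.top}, I would first reduce to the absolute case by showing, by induction on codimension, that
\[
E^{\bullet,1}_{1,\et}\bigl(\DRWlog{m}{d}{X|D}\bigr) \;=\; E^{\bullet,1}_{1,\et}\bigl(\DRWlog{m}{d}{X}\bigr);
\]
the base case uses that on a henselian two-dimensional local neighborhood of a codimension-one point any divisor has support in the closed point, so the modulus disappears on $Y_x$, and the inductive step uses localization long exact sequences combined with the $p$-cohomological dimension vanishing on henselizations. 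Once the $q=1$ row is identified with the absolute Kato complex for $\DRWlog{m}{d}{X}$, I would invoke the $p$-primary Kato conjecture for smooth projective varieties over finite fields (\cite[Theorem 8.1]{kerzsaitoIHES}, whose $p$-primary version is due to Jannsen-Saito-Sato) to conclude vanishing at homological positions $d-1$ and $d-2$.

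With both steps in hand, a diagram chase identical to that of Corollary~\ref{ladic.nis=et} yields $H^d(X_\nis,\cRF) \cong H^d(X_\et,\cRF)$. The main obstacle is Step~2, particularly the reduction from the modulus Kato complex to the absolute one at points lying on $D$; here the structure of the graded quotients $\gr^{\ul n,\nu}\DRWlog{m}{d}{X}$ from Proposition~\ref{gr.log.form}(ii) as coherent $\cO_{D_\nu}^{p^e}$-modules, whose higher cohomology on the relevant henselizations vanishes, should allow one to kill the modulus discrepancy in the local étale cohomology groups appearing along the $q=1$ row.
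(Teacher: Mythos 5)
Your proposal follows essentially the same route as the paper: the paper proves the theorem by combining Proposition~\ref{E0nis=et} (identification of the $q=0$ rows of the Nisnevich and \'etale coniveau spectral sequences, via reduction to the reduced and then absolute case using Proposition~\ref{gr.log.form}) with Proposition~\ref{E1acyclic} (identification of the $q=1$ row with the absolute Kato complex by induction on codimension, then vanishing of $E^{d-1,1}_{2,\et}$ and $E^{d-2,1}_{2,\et}$ by the $p$-primary Kato conjecture), exactly as you outline. The only minor discrepancies are that the paper cites \cite{jskatohomology} rather than \cite{kerzsaitoIHES} for the $p$-primary Kato vanishing, and that the local scheme at a codimension-one point is one-dimensional, not two-dimensional.
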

\begin{proof}
By the coniveau spectral sequences, it follows from the following two propositions.
\end{proof}
\begin{proposition}\label{E1acyclic}
	Let $X$ be a smooth (not necessarily proper) variety over a finite field of dimension $d$.
The map $ \normalfont E^{\bullet, 1}_{1,\et}(\cRF)\xrightarrow{\cong} E^{\bullet, 1}_{1,\et}(\cAF)$ is an isomorphism of complexes. Therefore we have $\normalfont E^{d-1, 1}_{2,\et}(\cRF)=E^{d-2, 1}_{2,\et}(\cRF)=0$.
\end{proposition}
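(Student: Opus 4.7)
The plan is to show that the map of complexes is an isomorphism term-by-term by comparing local cohomology, and then deduce the $E_2$ vanishings from the known $p$-primary Kato conjecture, paralleling the strategy of Proposition~\ref{ladic.top}.

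Concretely, for every $x \in X^a$ the goal is to produce an isomorphism
\[ H^{a+1}_x(X_\et, \cRF) \xrightarrow{\cong} H^{a+1}_x(X_\et, \cAF). \]
Passing to a strict henselization at $x$, Proposition~\ref{gr.log.form}(i) identifies $\cAF$ with $\DRWlog{m}{d}{X|D_{\mathrm{red}}}$, so the cokernel $Q$ of the inclusion $\cRF \hookrightarrow \cAF$ admits a finite filtration whose successive quotients are the graded pieces $\gr^{\ul n,\nu}\DRWlog{m}{d}{X}$ for $\ul n \geq \ul 1$. By Proposition~\ref{gr.log.form}(ii) each such piece has the form $i_{\nu,*}\cF$ for a coherent $\cO_{D_\nu}^{p^e}$-module $\cF$. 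Since étale and Zariski cohomology agree for coherent sheaves, and local cohomology of a coherent sheaf over a noetherian local ring vanishes strictly above its Krull dimension, one has
\[ H^i_x(X_\et, i_{\nu,*}\cF) = H^i_x(D_\nu, \cF) = 0 \qquad \text{for } i > a - 1, \]
when $x \in D_\nu$, and trivially when $x \notin D_\nu$. In particular both $H^a_x$ and $H^{a+1}_x$ vanish on every graded piece, and dévissage along the finite filtration gives $H^a_x(Q) = H^{a+1}_x(Q) = 0$. The long exact sequence attached to $0 \to \cRF \to \cAF \to Q \to 0$ then yields the desired isomorphism, and functoriality upgrades it to an isomorphism of complexes $E^{\bullet,1}_{1,\et}(\cRF) \xrightarrow{\cong} E^{\bullet,1}_{1,\et}(\cAF)$.

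For the vanishing at $E^{d-1,1}_{2,\et}$ and $E^{d-2,1}_{2,\et}$, observe that $E^{\bullet,1}_{1,\et}(\cAF)$ coincides (up to sign) with the Kato complex of $\cAF$ in the sense of \cite[(0.2)]{kerzsaitoIHES}, so the $p$-primary Kato conjecture for smooth varieties over finite fields \cite[Theorem 8.1]{kerzsaitoIHES} provides vanishing of its cohomology at places $d-1$ and $d-2$. The isomorphism of complexes just established transfers this vanishing to $\cRF$, which is the required conclusion.

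The main obstacle lies in the local cohomology step: one must justify carefully that the coherent $\cO_{D_\nu}^{p^e}$-module structure (rather than an $\cO_{D_\nu}$-module structure) still yields the expected dimensional vanishing on the étale site, and that the filtration of $Q$ by the $\DRWlog{m}{d}{X|D_{\ul n}}$ descends correctly from the Zariski to the étale topology with the graded pieces identified in Proposition~\ref{gr.log.form}. Once these points are settled, the remainder is a formal diagram chase combined with the input from the Kato conjecture.
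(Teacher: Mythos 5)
Your argument is correct, but it reaches the term-by-term isomorphism by a genuinely different route than the paper. The paper proves $H^{a+1}_x(X_\et,\cRF)\cong H^{a+1}_x(X_\et,\cAF)$ by induction on the codimension $a$: the case $a=1$ is handled by two localization sequences on the punctured henselian local scheme $Y_x=X_x\setminus\{x\}$ together with the vanishing $H^1(X_{x,\et},\cAF/\cRF)=0$, and the inductive step compares the coniveau spectral sequences on $Y_x$ and $X_x$. You instead apply d\'evissage directly to the quotient $Q=\cAF/\cRF$ and invoke Grothendieck's local-cohomological vanishing $H^i_{x}(D_\nu,\cF)=0$ for $i>\operatorname{codim}_{D_\nu}(x)=a-1$ on each coherent graded piece, so that $H^a_x(Q)=H^{a+1}_x(Q)=0$ and the long exact sequence finishes the job in one stroke. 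This is cleaner and avoids the induction entirely; the price is that you need Grothendieck vanishing for local cohomology of coherent sheaves (plus the comparison of \'etale and Zariski local cohomology for quasi-coherent modules), whereas the paper only uses the more elementary vanishing of higher cohomology of quasi-coherent sheaves on (punctured) local schemes. The two worries you raise at the end are genuinely harmless: a coherent $\cO_{D_\nu}^{p^e}$-module is a coherent module on the Frobenius twist $D_\nu^{(p^e)}$, and since Frobenius is a finite universal homeomorphism the supports and dimensions relevant to Grothendieck vanishing are unchanged; and the filtration of $Q$ with the graded pieces of Proposition~\ref{gr.log.form} is available on the \'etale site (this is exactly how the paper itself uses Proposition~\ref{gr.log.form} in its $a=1$ step). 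The second half of your argument (identification of $E^{\bullet,1}_{1,\et}(\cAF)$ with the Kato complex and the vanishing of its cohomology at places $d-1$ and $d-2$) coincides with the paper's; note only that for the identification the paper cites \cite[Theorem 4.11.1]{jssduality}, and for the $p$-primary Kato conjecture it cites \cite{jskatohomology} rather than \cite[Theorem 8.1]{kerzsaitoIHES}, which concerns the prime-to-$p$ coefficients.
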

\begin{proof}
For $x\in X^a$, we denote $X_x:=\Spec(\cO_{X,x}^{h})$ the henselization of $X$ at $x$, and $Y_x:=X_x\setminus \{x\}$.  We want to prove that
\[ H^{a+1}_x(X,\cRF) \cong H^{a+1}_x(X, \cAF).\]

If $a=1$, then any divisor of $X_x$ must have support in the closed point $\{x\}$. Therefore, we have \[  \cRF|_{Y_x} = \cAF|_{Y_x} \]
by the definition of $\cRF$. Using the localization exact sequences twice, we obtain
\[ \xymatrix{ H^1(X_{x,\et},\cRF) \ar[r]\ar[d] & H^1(Y_{x,\et},\cRF)\ar[r]\ar@{=}[d] & H^2_x(X_{\et},\cRF)\ar[r]\ar[d]&0\\
H^1(X_{x,\et},\cAF) \ar[r] & H^1(Y_{x,\et},\cAF)\ar[r] & H^2_x(X_{\et},\cAF)\ar[r]&0.
}\]
We claim that the first vertical arrow is surjective: Indeed, we have the  exact sequence
\[
  H^1(X_{x,\et},\cRF) \to H^1(X_{x,\et},\cAF)\to H^1(X_{\et}, \cAF/\cRF), \]
where  $H^1(X_{\et}, \cAF/\cRF)=0$ since this sheaf is a successive extension of coherent
sheaves by Proposition \ref{gr.log.form}. We conclude that the third vertical map in the
previous commutative diagram is an isomorphism.

For general codimension $a>1$, we proof this by induction. The coniveau spectral sequence on $Y_x$ gives us the exact sequence
\begin{equation}
 \footnotesize \bigoplus\limits_{y\in Y_x^{a-2}}H^{a-1}_y(Y_{x,\et},\cRF) \to \bigoplus\limits_{y\in Y_x^{a-1}}H^{a}_y(Y_{x,\et},\cRF) \to H^a(Y_{x,\et}, \cRF) \to 0.
\end{equation}
On the other hand, the  localization exact sequence for $\cRF$ on $X_x$ tells us
\begin{align}
H^a(Y_{x,\et}, \cRF) \cong H^{a+1}_x(X_{x,\et},\cRF),
\end{align}
since we know that $H^{a+1}(X_{x,\et},\cRF)\cong H^{a+1}(x_{\et},\cRF)=0$ and similarly $H^{a}(X_{x,\et},\cRF)\cong H^{a}(x_{\et},\cRF)=0$.
Combining these facts, we get the following diagram with exact rows:
\[ \xymatrix@C=1em{
\scriptstyle\bigoplus\limits_{y\in Y_x^{a-2}}H^{a-1}_y(Y_{x,\et},\cRF) \ar[r]\ar[d] & \scriptstyle \bigoplus\limits_{y\in Y_x^{a-1}}H^{a}_y(Y_{x,\et},\cRF) \ar[r]\ar[d] &H^{a+1}_x(X_{x,\et},\cRF) \ar[r]\ar[d] &0\\
\scriptstyle\bigoplus\limits_{y\in X_x^{a-2}}H^{a-1}_y(X_{x,\et},\cAF) \ar[r] & \scriptstyle \bigoplus\limits_{y\in X_x^{a-1}}H^{a}_y(X_{x,\et},\cAF) \ar[r] &H^{a+1}_x(X_{x,\et},\cAF) \ar[r] &0
}\]
The first two vertical maps are isomorphisms by induction. Hence the third vertical arrow is also an isomorphism. Thanks to \cite[Theorem 4.11.1]{jssduality}, we see that the complex $E_{1,\et}^{\bullet,1}(\DRWlog{m}{d}{X})$ is the Kato complex of $\cAF$ (cf. \cite[(0.2)]{kerzsaitoIHES}) up to a sign.  By the known Kato conjecture on vanishing of the cohomology groups of this complex at places $d-1$ and $d-2$ (cf. \cite{jskatohomology}), we obtain the second part of Proposition \ref{E1acyclic}.
\end{proof}
\begin{proposition}\label{E0nis=et}
Let $X$ be a smooth (not necessarily proper) over a finite field $k$ of dimension $d$. For any $x\in X^a$, the canonical map
\begin{equation}\label{loc.nis=et}
\normalfont H^{a}_x(X_{\nis},\cRF) \to H^{a}_x(X_{\et}, \cRF)
\end{equation}
is an isomorphism.

That is, there is a natural isomorphism of complexes
\[ \normalfont E^{\bullet,0}_{1,\nis}(\cRF)\xrightarrow{\cong} E^{\bullet,0}_{1,\et}(\cRF) \]
\end{proposition}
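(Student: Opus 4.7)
The plan is to follow the inductive strategy of the proof of Proposition \ref{E1acyclic}. For $x \in X^a$ write $X_x = \Spec(\mathcal{O}_{X,x}^h)$ and $Y_x = X_x \setminus \{x\}$. The three steps are: reduce each of $H^a_x(X_\nis, \cRF)$ and $H^a_x(X_\et, \cRF)$, via the localization sequence on $X_x$, to a cohomology group on $Y_x$; compare the resulting Nisnevich and \'etale cohomology groups on $Y_x$ by induction on $a$ using the coniveau spectral sequences; and then conclude by a diagram chase.

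On the Nisnevich side the reduction is immediate: since $X_x$ is a henselian local scheme, $H^i(X_{x,\nis}, \cRF) = 0$ for every $i \geq 1$, so the localization sequence yields $H^a_x(X_{\nis}, \cRF) \cong H^{a-1}(Y_{x,\nis}, \cRF)$ for $a \geq 2$, with the obvious modifications when $a=0,1$. On the \'etale side I would establish the analogous vanishing $H^i(X_{x,\et}, \cRF) = 0$ for $i \geq 1$. Using the short exact sequence $0 \to \cRF \to \cAF \to \cAF/\cRF \to 0$ on $X_\et$, it suffices to check this vanishing separately for $\cAF/\cRF$ and for $\cAF$. For the quotient it follows from Proposition \ref{gr.log.form}(ii), since $\cAF/\cRF$ is filtered by coherent sheaves pushed forward from the components $D_\nu$, and quasi-coherent sheaves have no higher cohomology on the affine henselian scheme $X_x$. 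For $\cAF$ itself, one applies proper base change for the $p$-torsion \'etale sheaf $\cAF$ on the henselian local $X_x$ to reduce to cohomology of $\kappa(x)$, then invokes the cohomological dimension bounds used to prove $H^{a+1}(X_{x,\et}, \cRF) \cong H^{a+1}(x_\et, \cRF) = 0$ in Proposition \ref{E1acyclic}. The localization sequence then yields $H^a_x(X_{\et}, \cRF) \cong H^{a-1}(Y_{x,\et}, \cRF)$.

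It remains to show $H^{a-1}(Y_{x,\nis}, \cRF) \cong H^{a-1}(Y_{x,\et}, \cRF)$, which I would prove by induction on $a$. The base case $a = 1$ is trivial, since then $Y_x$ is a single point (the generic point of the DVR $X_x$, which lies in $U$, so $\cRF|_{Y_x} = \cAF|_{Y_x}$). For the inductive step I compare the coniveau $E_1$-rows on $Y_x$ for the Nisnevich and \'etale topologies: each term is indexed by a point of $Y_x$ whose codimension in $X$ is strictly less than $a$, because henselization preserves codimensions, so the inductive hypothesis supplies level-wise isomorphisms between the $E_1$-columns. A standard diagram chase then yields the isomorphism on $H^{a-1}$.

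The main obstacle will be the \'etale vanishing invoked above, namely $H^i(X_{x,\et}, \cAF) = 0$ for $i \geq 1$ on the henselian local $X_x$. This requires a careful combination of proper base change for $p$-primary sheaves with the cohomological dimension estimates on the residue fields $\kappa(x)$ that appear in the proof of Proposition \ref{E1acyclic}; the other ingredients (Serre vanishing for the coherent quotient via Proposition \ref{gr.log.form}, and the diagram chase on $Y_x$) are essentially formal given the inductive setup.
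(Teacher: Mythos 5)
Your reduction to $Y_x$ on the \'etale side rests on the vanishing $H^i(X_{x,\et},\cAF)=0$ for \emph{all} $i\geq 1$, and this is where the argument breaks. The cohomological dimension bound that the paper uses (the residue field $\kappa(x)$ has $p$-cohomological dimension $\leq 1$ in characteristic $p$) only gives $H^i(X_{x,\et},\cAF)\cong H^i(\kappa(x)_{\et},(\cAF)_{\bar x})=0$ for $i\geq 2$; it says nothing about $i=1$. The group $H^1(\kappa(x),(\cAF)_{\bar x})$ is in general nonzero, because the stalk $(\cAF)_{\bar x}=W_m\Omega^d_{\mathcal{O}^{sh}_{X,x},\log}$ is a nontrivial Galois module (already for a curve and a closed point it is a quotient of $(\mathcal{O}^{sh}_{X,x})^{\times}/p^m$, which is nonzero since a uniformizer has no $p$-th root). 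This is exactly why the paper's proof of Proposition \ref{E1acyclic} in the case $a=1$ does \emph{not} assert any such vanishing and instead only proves that $H^1(X_{x,\et},\cRF)\to H^1(X_{x,\et},\cAF)$ is surjective before running a diagram chase through the localization sequences; if your vanishing held, that step would be vacuous. Concretely, your reduction $H^a_x(X_{\et},\cRF)\cong H^{a-1}(Y_{x,\et},\cRF)$ fails for $a=1,2$, where the term $H^1(X_{x,\et},\cRF)$ enters the localization sequence.

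The second, more structural, gap is the inductive step $H^{a-1}(Y_{x,\nis},\cRF)\cong H^{a-1}(Y_{x,\et},\cRF)$. Levelwise isomorphisms between the $q=0$ rows of the two coniveau spectral sequences on $Y_x$ do not imply this: the \'etale spectral sequence has a second nonvanishing row $E_1^{\bullet,1}$, whose terms are (via purity) of the form $H^1(\kappa(y),W_m\Omega^{d-p}_{\log})$ and which contributes the graded piece $E_\infty^{a-2,1}(Y_x)$ to $H^{a-1}(Y_{x,\et},\cRF)$, besides possibly modifying $E_\infty^{a-1,0}$ through the incoming differential $d_2\colon E_2^{a-3,1}\to E_2^{a-1,0}$. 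This extra row is precisely the discrepancy between Nisnevich and \'etale cohomology that makes Theorem \ref{nis=et} nontrivial, and in the paper it is controlled only at the last two spots of the complex and only by invoking the Kato conjecture; no such vanishing is available on the punctured henselian schemes $Y_x$, so the ``standard diagram chase'' does not go through. The paper's own proof avoids all of this: it reduces to reduced $D$ using Proposition \ref{gr.log.form}(ii) (the quotient of the two relative sheaves is a successive extension of coherent sheaves, for which Nisnevich and \'etale cohomology agree), replaces $\cRF$ by $\cAF$ using Proposition \ref{gr.log.form}(i), and then identifies both $H^a_x(X_{\nis},\cAF)$ and $H^a_x(X_{\et},\cAF)$ directly with $K^M_{d-a}(k(x))/p^m$ by the Gersten resolution on the Nisnevich side and by purity on the \'etale side --- no induction on codimension and no localization sequence is needed.
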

\begin{proof}

To prove this, we use Proposition \ref{gr.log.form}(ii). We reduced to the case that $D$ is reduced, since the quotient $W_m\Omega_{X|D}^d/W_m\Omega_{X|D_{\mathrm{red}}}^d$ on $X_{\nis}$ is a successive extension of coherent sheaves, for which the \'etale and Nisnevich cohomology groups are the same.  By Proposition \ref{gr.log.form}(i), it is equivalent to show that the canonical map
\[ H^{a}_x(X_{\nis},\cAF) \xrightarrow{\cong} H^{a}_x(X_{\et}, \cAF)\]
is an isomorphism. This is true since both are isomorphic to $K^M_{d-a}(k(x))/p^m=\DRWlog{m}{d-a}{x}$ by purity \cite[Proposition 2.1]{milneduality} and the known Gersten conjecture \cite{grossuwa}.

\end{proof}
\begin{corollary}\label{mod.pm}
There is a canonical isomorphism
\begin{equation*}
 \normalfont H^d(X_{\nis}, \cRM{d})/p^m \cong H^d(X_{\et}, W_m\Omega_{X|D,\log}^d).
 \end{equation*}
\end{corollary}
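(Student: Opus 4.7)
The plan is to chain the isomorphism of Theorem \ref{nis=et} with a Nisnevich-level identification provided by the relative Bloch--Gabber--Kato isomorphism, producing a zigzag
\[
H^d(X_{\nis}, \cRM{d})/p^m \;\cong\; H^d(X_{\nis}, \cRF) \;\cong\; H^d(X_{\et}, \cRF),
\]
whose right-hand isomorphism is Theorem \ref{nis=et} and whose left-hand isomorphism is induced by the globally defined $d\log$ map.

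First, I would observe that $\cRM{d}$ is $p$-torsion free: it is a subsheaf of $\mathcal{K}^M_{d,X}$, which is $p$-torsion free by combining \cite[Theorem 8.1]{geisserlevine} with the Gersten resolution \cite{kerzgersten}. Since the Nisnevich cohomological dimension of $X$ is $d$, the long exact sequence attached to $0 \to \cRM{d} \xrightarrow{p^m} \cRM{d} \to \cRM{d}/p^m \to 0$ yields
\[
H^d(X_{\nis}, \cRM{d})/p^m \;\cong\; H^d(X_{\nis}, \cRM{d}/p^m).
\]
Next, Theorem \ref{relative.blochkato.general} applied to the Nisnevich-local stalks $\mathcal{O}_{X,x}^h$ identifies the $d\log$ map as a surjection of Nisnevich sheaves
\[
\cRM{d}/p^m \twoheadrightarrow \cRF
\]
with kernel $K := (p^m\mathcal{K}^M_{d,X} \cap \cRM{d})/p^m\cRM{d}$, which is precisely the $p^m$-torsion subsheaf $Q[p^m]$ of $Q := \mathcal{K}^M_{d,X}/\cRM{d}$. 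Taking the associated long exact cohomology sequence thus reduces the desired isomorphism to the vanishing $H^d(X_{\nis}, K) = 0$.

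The main obstacle is establishing this vanishing. By Proposition \ref{grad.MK}, $Q$ admits a finite filtration (finite because $D$ is an effective Weil divisor with finite multiplicities) whose successive quotients are of the form $i_{\nu,*}\mathcal{K}^M_{d,D_\nu|D_{\nu,\underline n}}$, all supported on the $(d-1)$-dimensional smooth subschemes $D_\nu$. Running through this filtration inductively via the six-term Tor exact sequence
\[
0 \to A[p^m] \to B[p^m] \to C[p^m] \to A/p^m \to B/p^m \to C/p^m \to 0
\]
attached to each short exact sequence $0\to A \to B \to C \to 0$ in the filtration, one sees that $K = Q[p^m]$ is itself a successive extension of subsheaves of $i_{\nu,*}(\mathcal{K}^M_{d,D_\nu|D_{\nu,\underline n}}[p^m])$, each supported on a $(d-1)$-dimensional closed subscheme. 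Hence $H^d(X_{\nis}, K) = 0$ by Nisnevich cohomological dimension. Combining with the earlier reductions and Theorem \ref{nis=et} completes the argument.
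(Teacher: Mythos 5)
Your proof is correct and follows essentially the same route as the paper's: reduce to $H^d(X_{\nis},\cRM{d}/p^m)$ by Nisnevich cohomological dimension, identify the quotient $\cRM{d}/(p^m\mathcal{K}^M_{d,X}\cap\cRM{d})$ with $\cRF$ via Theorem~\ref{relative.blochkato.general}, kill the kernel because it is supported on the $(d-1)$-dimensional divisor $D$, and conclude with Theorem~\ref{nis=et}. The only difference is that your filtration argument for $H^d(X_{\nis},K)=0$ is more elaborate than necessary (and note Proposition~\ref{grad.MK} as stated only covers graded pieces with $n_\nu=0$): since $K=Q[p^m]\subset Q$ and $Q=\mathcal{K}^M_{d,X}/\cRM{d}$ is visibly supported on $D$, the vanishing is immediate, which is exactly the paper's one-line observation.
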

\begin{proof}
First we have
\[ H^d(X_{\nis}, \cRM{d})/p^m \cong H^d(X_{\nis}, \cRM{d}/p^m) \cong H^d(X_{\nis}, \cRM{d}/p^m\mathcal{K}^M_{d, X}\cap \cRM{d}), \]
where the first isomorphism is due to the fact that the Nisnevich cohomological dimension
of $X$ is $d$, and the second follows from the observation that the support of
$p^m\mathcal{K}^M_{d, X}\cap \cRM{d}/p^m\mathcal{K}^M_{d, X|D}$ is contained in $D$, which is of dimension $d-1$.

By Theorem \ref{relative.blochkato.general} and Theorem \ref{nis=et}, hence we have
\[ H^d(X_{\nis}, \cRM{d})/p^m \cong H^d(X_{\nis}, W_m\Omega_{X|D,\log}^d) \cong H^d(X_{\et}, W_m\Omega_{X|D,\log}^d). \]
\end{proof}
\begin{corollary}\label{transition.surjective}
Let $D_1,D_2$ be two effective divisors on $X$ whose supports are simple normal crossing divisors. Assume $D_1\geq D_2$. Then the canonical map
\[ \normalfont  H^d(X_{\et}, W_m\Omega_{X|D_1,\log}^d) \to H^d(X_{\et}, W_m\Omega_{X|D_2,\log}^d) \]
is surjective.
\end{corollary}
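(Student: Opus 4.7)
My plan is to exhibit the quotient $Q\coloneqq \DRWlog{m}{d}{X|D_2}/\DRWlog{m}{d}{X|D_1}$ (which makes sense because $D_1\geq D_2$ yields $\DRWlog{m}{d}{X|D_1}\subseteq \DRWlog{m}{d}{X|D_2}$) as an iterated extension of coherent sheaves supported on the smooth $(d-1)$-dimensional components $D_\nu$ of $D_{\mathrm{red}}$, and then to conclude via the long exact cohomology sequence
\[
\cdots\to H^d(X_\et, \DRWlog{m}{d}{X|D_1})\to H^d(X_\et, \DRWlog{m}{d}{X|D_2})\to H^d(X_\et, Q)\to\cdots
\]
that the desired surjectivity reduces to the single vanishing statement $H^d(X_\et, Q)=0$.

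To produce such a filtration, I would choose a chain of divisors $D_2=E_0<E_1<\cdots<E_N=D_1$ in which each step increases exactly one coefficient by one, and consider the induced finite filtration of $Q$. The successive quotients are the graded pieces $\gr^{\underline n,\nu}\DRWlog{m}{d}{X}$. By Proposition~\ref{gr.log.form}(i) the sheaf $\DRWlog{m}{d}{X|D_{\underline n}}$ is constant on the cube $\underline n\leq\underline 1$, so the steps lying entirely within that range contribute nothing. On the remaining steps the multi-index satisfies $\underline n\geq\underline 1$, so Proposition~\ref{gr.log.form}(ii) identifies the corresponding graded piece with $i_{\nu,*}\mathcal F$ for a coherent $\mathcal O_{D_\nu}^{p^e}$-module $\mathcal F$.

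The third step is to verify $H^d(X_\et, i_{\nu,*}\mathcal F)=0$ for every such $\mathcal F$. Viewed as a sheaf of abelian groups, $\mathcal F$ is quasi-coherent on $D_\nu$ (the Frobenius twist only affects the module structure, not the underlying sheaf), so its \'etale cohomology coincides with its Zariski cohomology. Since $\dim D_\nu=d-1$, the Zariski cohomology of any coherent sheaf on $D_\nu$ vanishes in degree $d$. An easy induction along the filtration of $Q$ then delivers $H^d(X_\et,Q)=0$ and, via the long exact sequence, the stated surjectivity.

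I anticipate that the main obstacle will be administrative rather than conceptual: one needs to set up the filtration carefully so that Proposition~\ref{gr.log.form}(ii) genuinely applies at every step with nontrivial graded piece. This is slightly delicate when $D_2$ does not contain every component of $D_{\mathrm{red}}$, because then some steps of the chain lie below $\underline 1$ and must be absorbed using Proposition~\ref{gr.log.form}(i). Once this combinatorial bookkeeping is arranged, the \'etale/Zariski comparison for coherent sheaves and the cohomological dimension of $D_\nu$ dispatch the remaining vanishing at once.
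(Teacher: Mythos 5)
Your overall strategy --- filter $Q \coloneqq W_m\Omega^d_{X|D_2,\log}/W_m\Omega^d_{X|D_1,\log}$ into graded pieces controlled by Proposition~\ref{gr.log.form} and kill $H^d_{\et}$ of each piece --- is the mechanism the paper itself uses elsewhere (Propositions~\ref{E1acyclic} and~\ref{E0nis=et}), and your final coherent-sheaf vanishing step is fine. But there is a genuine gap at the assertion that every step with nontrivial graded piece has multi-index $\ul{n}\geq \ul{1}$: the intermediate indices between $\ul{n}_2$ and $\ul{n}_1$ need only satisfy $\ul{n}\not\leq\ul{1}$, which does not imply $\ul{n}\geq\ul{1}$, and no reordering of the chain repairs this. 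Concretely, take $D_2=2D_a$ and $D_1=2D_a+D_b$; the single graded piece is $\gr^{(2,0),b}W_m\Omega^d_{X,\log}=W_m\Omega^d_{X|2D_a,\log}/W_m\Omega^d_{X|2D_a+D_b,\log}$, whose index $(2,0)$ is neither $\leq\ul{1}$ (so part~(i) does not apply) nor $\geq\ul{1}$ (so part~(ii) does not apply). Your proposed remedy of ``absorbing'' such steps via Proposition~\ref{gr.log.form}(i) cannot work, since (i) only identifies the sheaves inside the unit cube $\ul{0}\leq\ul{n}\leq\ul{1}$ and here the starting index already lies outside it. What you actually need is an extension of the cited proposition to mixed indices (vanishing of $\gr^{\ul{n},\nu}$ when $n_\nu=0$ with the other coordinates arbitrary, and coherence when $n_\nu\geq 1$); these are plausibly provable by the methods of the references, but they are not what is stated, so the bookkeeping you defer is not merely administrative. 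Note also that the coherence input is genuinely indispensable on your route: a $p$-torsion sheaf supported on a $(d-1)$-dimensional variety over a finite field can have nonzero $H^d_{\et}$, since $\mathrm{cd}_p$ of such a variety is $d$.

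For comparison, the paper sidesteps all of this by arguing on the Nisnevich site with Milnor $K$-sheaves: in the exact sequence $0\to\mathcal{K}^M_{d,X|D_1}\to\mathcal{K}^M_{d,X|D_2}\to\mathcal{K}^M_{d,X|D_2}/\mathcal{K}^M_{d,X|D_1}\to 0$ the quotient is supported on a closed subscheme of dimension $d-1$, and Nisnevich cohomological dimension is bounded by Krull dimension, so its $H^d_{\nis}$ vanishes with no filtration or coherence argument at all; Corollary~\ref{mod.pm} then transports the resulting surjection to $H^d(X_{\et},W_m\Omega^d_{X|D_i,\log})$. Either adopt that route, or supply the missing mixed-index statements (alternatively, reduce to the case $\mathrm{Supp}(D_1)=\mathrm{Supp}(D_2)$, where your dichotomy does hold).
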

\begin{proof}
Note that we have the following exact sequence on $X_{\nis}$
\[ 0\to \mathcal{K}^M_{d, X|D_1} \to  \mathcal{K}^M_{d, X|D_2} \to \mathcal{K}^M_{d, X|D_2}/\mathcal{K}^M_{d, X|D_1}\to 0, \]
but the Nisnevich sheaf $\mathcal{K}^M_{d, X|D_2}/\mathcal{K}^M_{d, X|D_1}$ is supported in $D_2$, which is of dimension $d-1$. Hence the associated long exact sequence implies that
\[ H^d(X_{\nis},\mathcal{K}^M_{d, X|D_1} ) \to H^d(X_{\nis},\mathcal{K}^M_{d, X|D_2} )   \]
is surjective. Therefore the claim follows from Corollary \ref{mod.pm}.
\end{proof}

Now, we recall the duality theorem of the relative logarithmic de Rham-Witt sheaves.
\begin{theorem}(\cite[Theorem 4.1.4]{jszduality})\label{jszduality}
Let $X,U, D$ be as before. For $i\in \mathbb{N}, r\in \mathbb{N}$, there are natural perfect pairings of topological groups
\[\normalfont H^i(U_{\et}, \DRWlog{m}{r}{U}) \times \varprojlim\limits_{\stackrel{E}{\mathrm{Supp}(E)\subset X\setminus U}} H^{d+1-i}(X_{\et},\DRWlog m {d-r} {X|E} ) \to H^{d+1}(X_{\et}, \cAF) \xrightarrow{\mathrm{Tr}} \Zpm, \]
where the first group is endowed with discrete topology, the second is endowed with profinite topology, and the limit with respect to all effective divisor $E$ with $\Supp(E)\subset X\setminus U$.
\end{theorem}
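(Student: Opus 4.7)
The plan is to establish this duality by constructing an explicit pairing on \'etale cohomology and then proving perfectness by devissage along the modulus filtration, reducing to absolute Milne--Moser duality for $\cAF$. For the construction, cup product gives
\[ H^i(U_\et,\DRWlog{m}{r}{U}) \otimes H^{d+1-i}(U_\et,\DRWlog{m}{d-r}{U}) \to H^{d+1}(U_\et,\DRWlog{m}{d}{U}). \]
One takes $\beta \in H^{d+1-i}(X_\et,\DRWlog{m}{d-r}{X|E})$, restricts it to $U$, and cups with $\alpha \in H^i(U_\et,\DRWlog{m}{r}{U})$. The essential local check is that for $\omega \in j_*\DRWlog{m}{r}{U}$ and $\eta \in \DRWlog{m}{d-r}{X|E}$, the wedge $\omega \wedge \eta$, a priori only a section of $j_*\DRWlog{m}{d}{U}$, lies in $\cAF$: the modulus condition on $\eta$ (one of its $d\log$ entries lies in $1+\fm^{E}$) absorbs any log pole of $\omega$ along each $D_\lambda$. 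Composing with the absolute trace $\mathrm{Tr}\colon H^{d+1}(X_\et,\cAF) \to \Zpm$ of Milne--Moser, available since $X$ is smooth and proper of dimension $d$ over a finite field, gives the pairing.

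To prove perfectness I would argue by devissage in the modulus $E$. For two effective divisors $E' \geq E$ with the same support $X\setminus U$, the quotient $\DRWlog{m}{d-r}{X|E'}/\DRWlog{m}{d-r}{X|E}$ admits a finite filtration whose graded pieces are the sheaves $\gr^{\ul n,\nu}\DRWlog{m}{d-r}{X}$ of Proposition~\ref{gr.log.form}(ii), hence are coherent $\cO_{D_\nu}^{p^e}$-modules. Dually, the topology on $H^i(U_\et,\DRWlog{m}{r}{U})$ is filtered by the kernels of the restriction maps to the $E$-th ramification quotients, and the graded pieces are computed by a residue/Cartier analysis analogous to that of Bloch--Kato at generic points of $D$. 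On each graded piece, Grothendieck--Serre duality on the smooth component $D_\nu$ gives a perfect pairing between the coherent sheaf and its Matlis--Serre dual, which one identifies with the matching graded piece on the $H^i(U_\et,\DRWlog{m}{r}{U})$ side via an explicit residue computation. The base case $E = 0$ (suitably interpreted as $W_m\Omega^{d-r}_X$) is Milne's duality theorem for logarithmic de Rham--Witt sheaves on a smooth proper variety over a finite field.

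Assembling the pieces, the five lemma over the finite filtration propagates perfectness from the graded pieces to each fixed $E$. Passing to the limit $\varprojlim_E$ on the profinite side is compatible with Pontryagin duality against the discrete $H^i(U_\et,\DRWlog{m}{r}{U})$ because the graded pieces, being coherent over a variety over the finite field $k$, are finite as abelian groups; thus the limit is Mittag-Leffler with finite terms and dualizes termwise to the colimit defining the $E$-adic topology on the other side.

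The main obstacle is the precise matching of graded pieces. One must verify that the filtration on the relative de Rham--Witt side (coming from $\ul n \in \bN^\Lambda$) is cup-product dual to the ramification filtration on the absolute side, and that the cup-product pairing reduces, on these graded pieces, to the coherent Grothendieck--Serre pairing on each $D_\nu$. This requires delicate local computations with the inverse Cartier operator, Teichm\"uller lifts $[\,\cdot\,]_m$, and the residue symbol at generic points of $D$, and is the technical heart of the argument in \cite{jszduality}.
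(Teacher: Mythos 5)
First, note that the paper does not prove this statement at all: it is imported verbatim from \cite[Theorem 4.1.4]{jszduality}. The closest thing to a proof in the present text is the argument for the local analogue, Theorem \ref{ramified.duality}, which shows the intended route: (a) reduce to $m=1$ by induction on $m$ using the exact sequences $0\to W_{m-1}\Omega^r_{\log}\xrightarrow{\cdot p}W_m\Omega^r_{\log}\xrightarrow{R}\Omega^r_{\log}\to 0$ and their relative versions, in which the modulus changes from $E$ to $[E/p]$; (b) for $m=1$, use the Artin--Schreier-type sequence $0\to \Omega^j_{\log}\to Z\Omega^j\xrightarrow{1-C}\Omega^j\to 0$ to replace logarithmic sheaves by the coherent sheaves $\Omega^j_X(\log E_{\mathrm{red}})(\pm E)$; (c) use that $j\colon U\to X$ is affine to write $H^q(U,\Omega^j_U)=\varinjlim_E H^q(X,\Omega^j_X(\log E_{\mathrm{red}})(E))$, and apply coherent (Serre, resp.\ Grothendieck local) duality termwise; (d) conclude by the five lemma. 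Your construction of the pairing is essentially right --- the local statement that the modulus on $\eta$ absorbs the poles of $\omega$ so that $\omega\wedge\eta$ lands in $W_m\Omega^d_{X,\log}$ is indeed the key input, packaged via adjunction so that the product lives on the proper $X$ where the trace is defined.

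Your perfectness argument, however, has a structural gap that I do not think can be repaired as stated. A devissage in the modulus $E$ needs a base case, and you propose Milne duality on $X$; but Milne duality pairs $H^i(X,W_m\Omega^r_{X,\log})$ with $H^{d+1-i}(X,W_m\Omega^{d-r}_{X,\log})$ on the \emph{proper} scheme $X$, whereas the left-hand group in the theorem lives on the non-proper $U$ and is in general infinite. Consequently the pairing at any \emph{fixed} $E$ is not perfect, so ``propagating perfectness from the graded pieces to each fixed $E$ by the five lemma'' has nothing true to propagate to; perfectness only appears after taking $\varprojlim_E$, and the limit must enter the argument before duality is invoked, exactly as in step (c) above. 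Relatedly, the filtration on $H^i(U_{\et},W_m\Omega^r_{U,\log})$ that you want to match with the graded pieces in $E$ is, in this paper and in \cite{jszduality}, \emph{defined} as the Pontryagin dual of the $E$-filtration (Definition \ref{new.filtration}); using it as an independent input makes the argument circular, and replacing it by a ramification-theoretic (Brylinski--Kato) filtration turns the comparison into a separate hard theorem that is not needed for the duality. Finally, the reduction to $m=1$, with the twist $E\mapsto[E/p]$ forced by $F$ and $V$, is an essential step you omit; without it the coherent graded pieces of Proposition \ref{gr.log.form}(ii) are not yet in play.
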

In particular, for $i=1$ and $r= 0$ we get isomorphisms
	\begin{equation*}
	\varprojlim\limits_{E} H^d(X_{\et},\DRWlog m d {X|E})\xrightarrow{\cong} H^1(U_{\et},\Z/p^m\Z)^\vee \cong \pi^{ab}_1(U)/p^m,
	\end{equation*}
	and
	\begin{equation*}
	H^1(U_{\et},\Z/p^m\Z) \xrightarrow{\cong} \varinjlim\limits_{E} H^d(X_{\et},\DRWlog m d {X|E})^{\vee},
	\end{equation*}
	where $A^\vee$ is the Pontryagin dual of a topological abelian group $A$.
	These isomorphisms can be used to define a measure of ramification for  \'{e}tale abelian covers of $U$ whose degree divides $p^m$.
	\begin{definition}\label{new.filtration}
		For our divisor $D$, we define
		\[  \normalfont \Fil_DH^1(U_{\et},\Z/p^m\Z):= H^d(X_{\et},\DRWlog m d {X|D})^{\vee}.\]

		Dually we define
		\[  \normalfont \pi^{\text{ab}}_1(X,D)/p^m :=\Hom(\Fil_DH^1(U_{\et},\Z/p^m\Z),\Z/p^m\Z). \]
		\end{definition}
	The group $ \pi^{\text{ab}}_1(X,D)/p^m$ is a quotient of $\pi_1^{\text{ab}}(U)/p^m$, which can be thought of as classifying abelian \'etale coverings of $U$ whose degree divides $p^m$ with ramification bounded by $D$.
\begin{corollary}\label{cft.mod.pm}
We have canonical isomorphisms
\begin{equation*}
\normalfont C(X,D)/p^m\cong H^d(X_{\nis}, \cRM{d})/p^m \xrightarrow{\cong} \pi^{\text{ab}}_1(X,D)/p^m.
 \end{equation*}
\end{corollary}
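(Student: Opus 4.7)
The plan is to chain together three results already established in this section. I would start from Theorem \ref{higheridele}, which provides the canonical isomorphism $\rho_{X,D}\colon C(X,D) \cong H^d(X_\nis, \mathcal{K}^M_{d,X|D})$. Tensoring this with $\Z/p^m\Z$ immediately yields the first asserted isomorphism $C(X,D)/p^m \cong H^d(X_\nis, \mathcal{K}^M_{d,X|D})/p^m$ without further work. Then Corollary \ref{mod.pm} rewrites the right-hand side as $H^d(X_\et, W_m\Omega^d_{X|D,\log})$, replacing the Nisnevich Milnor $K$-cohomology by an \'etale cohomology group of the relative logarithmic de Rham-Witt sheaf, on which the duality machinery of Theorem \ref{jszduality} is available.

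For the final identification with $\pi^{\text{ab}}_1(X,D)/p^m$, I would apply Theorem \ref{jszduality} in the case $i=1$, $r=0$. By Definition \ref{new.filtration} the filtration $\Fil_D H^1(U_\et, \Z/p^m\Z)$ is by construction the Pontryagin dual of $H^d(X_\et, W_m\Omega^d_{X|D,\log})$ as a topological group, and $\pi^{\text{ab}}_1(X,D)/p^m$ is defined as $\Hom(\Fil_D H^1(U_\et, \Z/p^m\Z), \Z/p^m\Z)$. So the task reduces to verifying the Pontryagin reflexivity
\[
H^d(X_\et, W_m\Omega^d_{X|D,\log}) \cong (H^d(X_\et, W_m\Omega^d_{X|D,\log})^{\vee})^{\vee},
\]
after which composing the three isomorphisms in sequence yields the corollary.

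The main obstacle is precisely this reflexivity, but it is not serious. By Corollary \ref{transition.surjective} all transition maps in the inverse system $\{H^d(X_\et, W_m\Omega^d_{X|E,\log})\}_E$ are surjective, so for the fixed $D$ the group $H^d(X_\et, W_m\Omega^d_{X|D,\log})$ is a continuous quotient of the profinite group $\varprojlim_E H^d(X_\et, W_m\Omega^d_{X|E,\log}) \cong \pi^{\text{ab}}_1(U)/p^m$. It is therefore itself profinite in the topology inherited from Theorem \ref{jszduality} (indeed a finite $p^m$-torsion group, by the boundedness enforced by $D$ together with the finiteness statements underlying Kato--Saito class field theory for smooth proper varieties over finite fields), so Pontryagin duality applies in the standard way and the double dual returns the original group. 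Composing the three isomorphisms above then completes the proof.
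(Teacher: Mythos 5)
Your proposal is correct and follows essentially the same route as the paper, which simply cites Theorem~\ref{higheridele}, Corollary~\ref{mod.pm} and Theorem~\ref{jszduality} (with Definition~\ref{new.filtration}) and chains them together. The only extra content you supply is the explicit check of Pontryagin reflexivity of $H^d(X_{\et}, W_m\Omega^d_{X|D,\log})$, which the paper leaves implicit; your justification is acceptable, though the cleanest reason this group is finite is that it differs from $H^d(X_{\et}, W_m\Omega^d_{X,\log})$ by cohomology of successive extensions of coherent sheaves on the proper scheme $X$ (Proposition~\ref{gr.log.form}(ii)).
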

\begin{proof}
This is a consequence of Theorem \ref{jszduality} and Corollary \ref{mod.pm}.
\end{proof}

\subsection{Class field theory via ideles}

\begin{theorem}{(logarithmic version of wildly ramified class field theory)}\label{thm.logcftvar}
For any integer $n$, there exists a canonical isomorphism
\[  \rho_{X,D,n}\colon C(X,D)/n \xrightarrow{\cong} \pi_1^{\mathrm{ab}}(X,D)/n,   \]
such the following triangle commutes
\[ \xymatrix{
&\bigoplus_{x\in X_0}\Z \ar[dl]_{\imath} \ar[dr]^-{\rho_U}&\\
C(X,D)/n \ar[rr]^-{\rho_{X,D}/n}&& \pi^{\mathrm{ab}}_1(U)/n
}\]
where the right diagonal map $\rho_U$ sends $1$ at the point $x$ to the Frobenius ${\rm Frob}_x$.
In particular, $ \rho_{X,D,n}$ induces an isomorphism
\begin{equation}
\varprojlim_{D,n}C(X,D)/n \cong \pi^{\mathrm{ab}}_1(U).
\end{equation}
\end{theorem}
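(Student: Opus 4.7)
The plan is to decompose $n = p^m n'$ with $(n', p) = 1$ and to treat the prime-to-$p$ and the $p$-primary parts separately before reassembling by the Chinese remainder theorem. For each prime $\ell \neq p$, Corollary~\ref{ladic_cft} already provides the required isomorphism
\[
  C(X,D)/\ell^m \;\cong\; H^d(X_{\nis},\mathcal{K}^M_{d,X|D})/\ell^m \;\cong\; \pi_1^{\mathrm{ab}}(U)/\ell^m,
\]
and since every \'etale cover of $U$ whose degree divides $\ell^m$ is automatically tame along $D$, the target coincides with $\pi_1^{\mathrm{ab}}(X,D)/\ell^m$ under the natural definition (one takes $\pi_1^{\mathrm{ab}}(X,D)/\ell^m := \pi_1^{\mathrm{ab}}(U)/\ell^m$ on the prime-to-$p$ side). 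For the $p$-primary part, Corollary~\ref{cft.mod.pm} directly yields
\[
  C(X,D)/p^m \;\cong\; \pi_1^{\mathrm{ab}}(X,D)/p^m
\]
with the right-hand side as in Definition~\ref{new.filtration}. Combining the two contributions produces the map $\rho_{X,D,n}$ for every $n$.

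Next, one verifies commutativity of the Frobenius triangle. By Theorem~\ref{higheridele}, the map $\rho_{X,D}$ sends a closed point $x \in U_0$ to its canonical class in $H^d(X_{\nis}, \mathcal{K}^M_{d,X|D})$. For the $\ell$-primary part, Sato's localized Chern class map from Theorem~\ref{Sato.ladic} sends this class to the fundamental cycle of $x$ in $H^{2d}_x(X_\et, j_!\mu_{\ell^m}^{\otimes d})$, and pairing with $H^1(U_\et, \Z/\ell^m\Z)$ via the duality of Saito recovers evaluation at the Frobenius $\mathrm{Frob}_x$ by construction of the trace. The analogous statement for the $p$-primary part follows from the normalization of the trace in Theorem~\ref{jszduality}: tracing a closed point through $H^d(X_{\nis}, \mathcal{K}^M_{d,X|D})/p^m \to H^d(X_\et, W_m\Omega^d_{X|D,\log})$ (Corollary~\ref{mod.pm}) and then pairing against a class in $H^1(U_\et, \Z/p^m\Z)$ again gives $\mathrm{Frob}_x$. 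Hence $\rho_{X,D,n}(x) = \mathrm{Frob}_x$ on both primary components, and therefore on $n$ itself.

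Finally, for the profinite statement, pass to the inverse limit over effective divisors $D$ with $\mathrm{Supp}(D) = X\setminus U$ and over all $n$. On the left the transition maps are surjective by Corollary~\ref{transition.surjective} (and its trivial prime-to-$p$ analogue), so the limit exists with no $\varprojlim^1$ obstruction. On the right one has
\[
  \varprojlim_{D} \pi_1^{\mathrm{ab}}(X,D)/p^m \;=\; \Bigl(\varinjlim_D \Fil_D H^1(U_\et, \Z/p^m\Z)\Bigr)^{\vee} \;=\; H^1(U_\et, \Z/p^m\Z)^{\vee} \;\cong\; \pi_1^{\mathrm{ab}}(U)/p^m,
\]
where the middle equality expresses the fact that the filtration $\Fil_D$ on $H^1(U_\et, \Z/p^m\Z)$ is exhaustive; this is the substantive arithmetic input and is extracted from the limit form of Theorem~\ref{jszduality}. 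Together with the straightforward analogue for $\ell \neq p$, this yields the claimed isomorphism $\varprojlim_{D,n} C(X,D)/n \cong \pi_1^{\mathrm{ab}}(U)$. The main obstacle in the whole argument is the Frobenius-compatibility step for the $p$-primary piece, which requires tracing through both the Kato–Saito cycle map on $\mathcal{K}^M_{d, X|D}/p^m$ and the trace normalization in the relative de Rham–Witt duality; once these are checked, the rest is a formal assembly of the results already established in Sections~\ref{l-part.over.finite fields} and the $p$-primary subsection.
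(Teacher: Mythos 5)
Your proof is correct and takes essentially the same route as the paper, whose own argument simply combines Corollary~\ref{ladic_cft} for the prime-to-$p$ part with Corollary~\ref{cft.mod.pm} and Theorem~\ref{higheridele} for the $p$-primary part, reassembled by the Chinese remainder theorem. The extra verifications you supply (the Frobenius compatibility via the trace normalizations and the exhaustiveness of $\Fil_D$ for the limit statement) are details the paper leaves implicit, and you have identified the correct sources for them.
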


\begin{proof}
For $n=p^m$, this follows  from Corollary \ref{cft.mod.pm} and Theorem \ref{higheridele} directly. For $n$ prime to $p$, this is Corollary \ref{ladic_cft}.
\end{proof}

\begin{remark}
  The wildly ramified class field theory in \cite{kerzsaitochowgroup}, where we work with
  the relative Chow group of zero cycles instead of the idelic class group, comprises Theorem~\ref{thm.logcftvar}.
\end{remark}

\bigskip
\section{Class field theory for complete local rings over $\mathbb{F}_q$}\label{sec:cftlocalrings}
\bigskip
Let $(A,\m)$ be a complete regular local ring of dimension $d$ and of characteristic $p>0$, and let $k:=A/\m$ be the residue field. We assume that $k$ is finite. We denote $X=\Spec(A), x=\m \in X$. Let $D$ be an effective divisor with $\Supp(D)$ is a simple normal crossing divisor, let $U=X\setminus D$ be its complement. Set $X'=X\setminus \{x\}, D'=D\setminus\{x\}$. We use the dimension function on $X$ (hence also induces one on $X'$) by $d(x)=\mathrm{dim}(\overline{\{x\}})$.
\subsection{Grothendieck's local duality}
We know that the sheaf $\Omega_X^d$ is a dualizing sheaf of $X$. There exists a natural homomorphism called the residue homomorphism \cite[ \S 5]{kunzresidues}:
\[  \mathrm{res}\colon H^d_x(X, \Omega_X^d)  \to k.\]
By compositing with the trace map $\mathrm{Tr}_{k/\Fp}\colon k \to \Fp=\Z/p\Z$, we get the map
\[  \mathrm{Tr}_{k/\Fp}\circ \mathrm{res}\colon H^d_x(X, \Omega_X^d)  \to \Z/p\Z. \]

For any finite $A$-module $M$, the Yoneda pairing and the above trace map give us a canonical pairing
\begin{equation}\label{yoneda.pairing}
 H^i_x(X, M)  \times \mathrm{Ext}_X^{d-i}(M, \Omega_X^d) \to \Z/p\Z.
 \end{equation}
\begin{theorem}[Grothendieck local duality \cite{hartshornelocal}]\label{grothendieck.local}
For each integer $i\geq 0$, the pairing (\ref{yoneda.pairing}) induces the isomorphisms
$$ \text{Ext}^{d-i}_A(M, \Omega_X^d)  \cong \text{Hom}_{\Zp1}(H_{x}^i(X, M), \Zp1),$$
$$ H_{x}^i(X, M) \cong  \text{Hom}_{\text{cont}}(\text{Ext}^{d-i}_A(M, \Omega_X^d), \Zp1),$$
where $\text{Hom}_{\text{cont}}$ denotes the set of continuous homomorphisms with respect to $\m$-adic topology on Ext group.
\end{theorem}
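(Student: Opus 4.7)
The plan is to deduce Theorem~\ref{grothendieck.local} from the classical form of Grothendieck's local duality together with the identification of Matlis duality with a Pontryagin-type duality that is made available by the finiteness of $k$.

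First, I would invoke Grothendieck's duality in its standard form for the complete Gorenstein local ring $A$ with dualizing module $\omega_A = \Omega_X^d$: for every finite $A$-module $M$ there is a natural isomorphism
$$H^i_x(X, M) \cong \Hom_A\bigl(\text{Ext}^{d-i}_A(M, \Omega_X^d),\ E(k)\bigr),$$
where $E(k)$ denotes the injective hull of $k$ as an $A$-module. This is proved by dimension shifting: one checks the case $M = \Omega_X^d$ directly (giving $H^d_x(X,\Omega_X^d) \cong E(k)$, which holds for any complete Gorenstein local ring), and extends to arbitrary finite $M$ using that both sides are effaceable $\delta$-functors in $M$ that agree on free modules.

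Next, I would use the residue map to convert $\Hom_A(-, E(k))$ into Pontryagin duality with target $\Zp1$. Under the identification $E(k)\cong H^d_x(X,\Omega_X^d)$, the composite $\mathrm{Tr}_{k/\Fp}\circ \mathrm{res}\colon H^d_x(X,\Omega_X^d)\to \Zp1$ corresponds to the canonical map $E(k)\to k \xrightarrow{\mathrm{Tr}_{k/\Fp}}\Zp1$ collapsing $E(k)$ onto its socle. For any $\m$-power torsion $A$-module $N$ (in particular $N = H^i_x(X, M)$), this yields an isomorphism
$$\Hom_A(N, E(k)) \xrightarrow{\ \cong\ } \Hom_{\Zp1}(N,\Zp1),$$
which one verifies first for $N = k$ (both sides are $k^\vee$), then for Artinian $N$ by induction on length, and finally for arbitrary $\m$-power torsion $N$ by passing to the direct limit of Artinian submodules. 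Combined with the classical duality, this gives the first asserted isomorphism.

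For the second isomorphism, observe that $\text{Ext}^{d-i}_A(M,\Omega_X^d)$ is a finitely generated $A$-module, hence $\m$-adically complete and separated. For such $N$ one has
$$\Hom_{\mathrm{cont}}(N,\Zp1) \cong \varinjlim_n \Hom(N/\m^n N,\Zp1) \cong \varinjlim_n \Hom_A(N/\m^n N, E(k)) \cong \Hom_A(N, E(k)),$$
using that $E(k) = \varinjlim_n \Hom_A(A/\m^n, E(k))$. Inserting this into the classical duality, or equivalently invoking Matlis double duality for finitely generated modules over the complete local ring $A$, yields the second isomorphism. The main technical point is the compatibility of the residue pairing $\mathrm{Tr}_{k/\Fp}\circ\mathrm{res}$ with the abstract socle projection $E(k)\to k \to \Zp1$ under the identification $H^d_x(X,\Omega_X^d)\cong E(k)$; this is precisely the content of Kunz's theory of residues~\cite{kunzresidues}, so no further verification is required.
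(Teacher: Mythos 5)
The paper does not actually prove this statement: it is quoted verbatim from Grothendieck--Hartshorne \cite{hartshornelocal} with the residue pairing of Kunz supplying the explicit trace, so there is no in-paper argument to compare yours against. Your derivation is the standard and correct way to pass from the classical Matlis-dual form $H^i_x(X,M)\cong \Hom_A(\mathrm{Ext}^{d-i}_A(M,\Omega^d_X),E(k))$ to the two Pontryagin-dual statements: the first isomorphism by applying Matlis double duality (legitimate since $\mathrm{Ext}^{d-i}_A(M,\Omega^d_X)$ is finitely generated over the complete ring $A$) and then converting $\Hom_A(-,E(k))$ into $\Hom_{\Zp1}(-,\Zp1)$ on $\m$-power torsion modules, the second by your $\varinjlim_n$ computation identifying $\Hom_{\mathrm{cont}}(-,\Zp1)$ with $\Hom_A(-,E(k))$ on finitely generated modules. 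Two small points of precision. First, there is no canonical $A$-linear projection ``$E(k)\to k$'' onto the socle; what your argument actually uses is only an additive ($\Fp$-linear, automatic since $\mathrm{char}(A)=p$) functional $\tau\colon E(k)\to\Zp1$ whose restriction to the socle is nonzero --- that condition alone makes $\psi\mapsto\tau\circ\psi$ an isomorphism $\Hom_A(N,E(k))\to\Hom_{\Zp1}(N,\Zp1)$ for $N=k$, and your length-induction and limit argument then does the rest. Second, the assertion that the isomorphisms are the ones \emph{induced by the Yoneda pairing} (\ref{yoneda.pairing}) with the functional $\mathrm{Tr}_{k/\Fp}\circ\mathrm{res}$ is exactly the compatibility you defer to \cite{kunzresidues}; that is an acceptable citation, but it is the one step where the specific residue map, rather than an arbitrary socle-nonvanishing functional, enters. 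With those caveats your proposal is complete and consistent with what the paper imports from the literature.
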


In particular, if $M$ is a locally free $A$-module, we obtain the isomorphisms
\begin{equation}
H^{d-i}(M^{t}) \cong \text{Hom}_{\Zp1}(H_{x}^i(X, M), \Zp1),
\end{equation}
where $M^t\coloneqq Hom_A(M, \Omega_X^d)$ is the dual $A$-module, and
\begin{equation}
 H_{x}^i(X, M) \cong  \text{Hom}_{\text{cont}}(H^{d-i}(M^{t}), \Zp1).
\end{equation}
Note that, for a locally free $A$-module $M$, we have \cite{hartshornelocal}
\begin{equation}\label{loc.are.cm}
H_{x}^i(X, M)=0 \   \  \mathrm{if} \ \ \   i\neq d.
\end{equation}

\subsection{Duality theorems}

The purity result of Shiho \cite[Theorem 3.2]{shihopurity} tells us that there exists a canonical isomorphism
\begin{equation}\label{trace.map}
 \mathrm{Tr}\colon H^{d+1}_x(X_{\et}, \DRWlog m d X) \xrightarrow{\cong} H^1(x, \Z/p^m\Z) \cong \Z/p^m\Z.
\end{equation}
Using the same method as in \cite{zhaoduality}, we obtain a map
\begin{equation*}
\Phi_{m}^{i,r}\colon H^i(U_{\et},\DRWlog{m}{r}{U}) \to \varinjlim\limits_E \mathrm{Hom}_{\Z/p^n\Z}(H^{d+1-i}_x(X_{\et}, \DRWlog m {d-r} {X|E}), H^{d+1}_x(X_{\et}, \DRWlog m d X) ).
\end{equation*}
If we endow $H^i(U_{\et},\DRWlog{m}{r}{U})$ with the discrete topology and endow $\varprojlim\limits_EH^{d+1-i}_x(X_{\et}, \DRWlog m {d-r} {X|E})$ with the profinite topology, where $E$ runs over the set of effective divisors with support on $X\setminus U$, then the (continuous) map $\Phi_{m}^{i,r}$ and the trace map (\ref{trace.map})  induce a pairing of topological abelian groups:
\begin{equation}\label{pairing.mod.p}
H^i(U_{\et},\DRWlog{m}{r}{U}) \times \varprojlim\limits_{E} H^{d+1-i}_x(X_{\et}, \DRWlog m {d-r} {X|E}) \to  \Z/p^m\Z.
\end{equation}
Using Pontryagin duality, we see that  $\Phi_{m}^{i,r}$ is an isomorphism if and only if the pairing (\ref{pairing.mod.p}) is  a perfect pairing of topological abelian groups for the respective $i,m,r$.

\begin{theorem}\label{ramified.duality} For any integers $r\geq 0, m\geq 1$,  the maps $\Phi_{m}^{i,r}$ are isomorphisms.

\end{theorem}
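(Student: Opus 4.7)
The approach is to adapt the global strategy of \cite{zhaoduality}, which underlies Theorem~\ref{jszduality}, to the present complete local setting, replacing Grothendieck--Serre duality on a proper scheme by Grothendieck's local duality (Theorem~\ref{grothendieck.local}). The guiding observation is that both $\DRWlog{m}{r}{X|D}$ and $\DRWlog{m}{d-r}{X|E}$ admit resolutions by coherent sheaves to which Theorem~\ref{grothendieck.local} can be applied, while the residue trace of (\ref{trace.map}) plays the role of the global trace map.

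As a first step, I would reduce to the case $m=1$ by d\'evissage. The short exact sequences
\[
0 \to \DRWlog{1}{r}{X|D} \xrightarrow{p^{m-1}} \DRWlog{m}{r}{X|D} \to \DRWlog{m-1}{r}{X|D} \to 0
\]
and their analogues for $\DRWlog{m}{d-r}{X|E}$ are compatible with the pairing (\ref{pairing.mod.p}) and Shiho's trace (\ref{trace.map}); a five-lemma argument then reduces the theorem to showing that $\Phi_1^{i,r}$ is an isomorphism. For $m=1$, I would then use the Artin--Schreier--Cartier two-term \'etale resolution
\[
0 \to \DRlog{r}{X|D} \to Z^r\DR{}{X|D} \xrightarrow{1-C} \DR{r}{X|D} \to 0
\]
by coherent $\mathcal{O}_X$-modules (together with its counterpart for $\DRlog{d-r}{X|E}$). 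This reduces the duality for the log sheaves to a duality between hyper-cohomologies of two-term complexes of coherent sheaves, to which Theorem~\ref{grothendieck.local} supplies the required pairing termwise: for each coherent $\mathcal F$ in the resolution, $H^i(U,\mathcal F)$ is paired with $\varprojlim_E H^{d+1-i}_x(X,\mathcal F^t)$ via the residue, and the Cartier operator $1-C$ on the source matches its analogue on the target thanks to the Cartier-type self-duality of the graded pieces $\gr^{\underline n,\nu}\DRWlog{1}{r}{X}$ provided by Proposition~\ref{gr.log.form}(ii).

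The topological perfectness and the interchange of $\varprojlim_E$ with the duality are handled as in the global case: each graded piece of the modulus filtration is coherent and supported on a divisor by Proposition~\ref{gr.log.form}(ii), so the transition maps are surjective with coherent kernels (an analogue of Corollary~\ref{transition.surjective}), and classical Mittag--Leffler arguments take care of the inverse limit on the target side and the continuity of $\Phi_1^{i,r}$ on the source side.

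The main obstacle I anticipate is the careful bookkeeping of the Cartier operator in the relative (log-along-$D$) setting: one must check that the two-term coherent resolution is compatible with the modulus filtration on both the $D$ and the $E$ side, and that the resulting coherent-sheaf duality pairing really recovers $\Phi_1^{i,r}$ under Shiho's trace (\ref{trace.map}). This is precisely the difficulty resolved in \cite{zhaoduality} in the global case, and I expect the argument to transpose verbatim to the local setting once Grothendieck--Serre duality is replaced by Theorem~\ref{grothendieck.local}.
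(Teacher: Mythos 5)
Your overall strategy coincides with the paper's: reduce to $m=1$ by d\'evissage, then resolve the logarithmic sheaves by coherent modules via the Artin--Schreier--Cartier sequence and apply Grothendieck's local duality (Theorem~\ref{grothendieck.local}) termwise, with Shiho's trace~(\ref{trace.map}) in place of the global trace. However, one step would fail as written. The d\'evissage sequence you propose,
\[
0 \to \DRWlog{1}{r}{X|D} \xrightarrow{p^{m-1}} \DRWlog{m}{r}{X|D} \to \DRWlog{m-1}{r}{X|D} \to 0,
\]
is not exact for the relative sheaves when the \emph{same} divisor appears in all three terms: multiplication by $p$ does not interact trivially with the modulus filtration. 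The correct sequence, which the paper imports from \cite[Theorem 1.1.6]{jszduality}, reads
\[
0\to \DRWlog{m-1}{d-r}{X|[E/p]} \xrightarrow{\cdot p} \DRWlog{m}{d-r}{X|E} \xrightarrow{R} \Omega_{X|E,\log}^{d-r}\to 0,
\]
with the divisor replaced by $[E/p]=\sum_{\lambda}[n_{\lambda}/p]D_{\lambda}$ on the $W_{m-1}$-term; this twist is harmless only because the pairing~(\ref{pairing.mod.p}) involves $\varprojlim_E$ over all $E$ supported on $X\setminus U$ (on the source side the sheaves live on $U$ with no modulus, so the usual sequence applies there). Without this adjustment your five-lemma reduction does not get off the ground.

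Two smaller points. Your appeal to a ``Cartier-type self-duality of the graded pieces provided by Proposition~\ref{gr.log.form}(ii)'' is a misattribution: that proposition only asserts coherence of the graded pieces. The paper matches $1-C$ across the pairing not on graded pieces but by writing the four-term exact sequence $0\to H^0(U_{\et},\Omega^r_{U,\log})\to H^0(U,Z\Omega^r_U)\to H^0(U,\Omega^r_U)\to H^1(U_{\et},\Omega^r_{U,\log})\to 0$ and its dual, rewriting $H^0(U,\Omega^r_U)=\varinjlim_E H^0(X,\Omega^r_X(\log E_{\mathrm{red}})\otimes\mathcal O_X(E))$ since $U\to X$ is affine, and applying Theorem~\ref{grothendieck.local} to the locally free sheaves $\Omega^r_X(\log E_{\mathrm{red}})(-E)$ to see that the two middle vertical maps are isomorphisms. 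Finally, you do not address $i\neq 0,1$; there the claim follows immediately from the vanishing~(\ref{loc.are.cm}) once the logarithmic sheaves are replaced by coherent ones.
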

\begin{proof}
We are reduced to the case $m=1$ by induction on $m$ and the following two exact sequences on the small \'etale site
\[ 0\to \DRWlog{m-1}{r}{U} \xrightarrow{\cdot p} \DRWlog{m}{r}{U} \xrightarrow{R} \Omega_{U,\log}^{r} \to 0  \] 
and
\[ 0\to \DRWlog{m-1}{d-r}{X|[E/p]} \xrightarrow{\cdot p} \DRWlog{m}{d-r}{X|E} \xrightarrow{R} \Omega_{X|E,\log}^{d-r}\to 0,  \] 
where $[E/p]=\sum_{\lambda\in \Lambda}[n_{\lambda}/p]D_{\lambda}$ if $D=\sum_{\lambda\in \Lambda}n_{\lambda}D_{\lambda}$, here $[n/p]={\rm min}\{ n'\in \Z| pn'\geq n\}$, and the exactness of the second complex follows from \cite[Theorem 1.1.6]{jszduality}.

Using the relation between logarithmic forms and differential forms (\cite[0, Corollary 2.1.18]{illusiederham} and \cite[Theorem 1.2.1]{jszduality}),  we see that the assertion for $i\neq 0, 1$ follows from the vanishing (\ref{loc.are.cm}) directly.  We have the following diagram with exact rows
\[ \xymatrix@C=0.25cm{ 0 \ar[r] &H^0(U_{et}, \Omega_{U,\log}^{r}) \ar[r]\ar[d] & H^0(U, Z\Omega_{U}^{r}) \ar[r] \ar[d] & H^0(U, \Omega_{U}^{r}) \ar[d]\ar[r] & H^{1}(U_{et}, \Omega_{X,\log}^{r}) \ar[r]\ar[d] &0\\
0\ar[r] & \scriptstyle \varinjlim\limits_EH_x^{d+1}(X_{\et}, \Omega_{X|E,\log}^{d-r})^{\ast} \ar[r] & \scriptstyle \varinjlim\limits_EH_x^d(X_{\et}, \Omega_{X|E}^{d-r}/d\Omega_{X|E}^{d-r-1})^* \ar[r] & \scriptstyle \varinjlim\limits_E H^d_x(X_{\et}, \Omega_{X|E}^{d-r})^* \ar[r] & \scriptstyle \varinjlim\limits_EH^d_x(X_{\et}, \Omega_{X|E,\log}^{d-r})^{\ast} \ar[r] &0
}\]
where $A^*\coloneqq \mathrm{Hom}_{\Z/p\Z}(A,\Z/p\Z)$ for an abelian group $A$, $\Omega_{X|E}^{d-r}\coloneqq \Omega_X^{d-r}(\log E_{\mathrm{red}})\otimes \mathcal{O}_X(-E)$, and $d\Omega_{X|E}^{d-r-1}\coloneqq \mathrm{Image}(d\colon \Omega_{X|E}^{d-r-1} \to \Omega_X^{d-r})$, and $Z\Omega_{U}^{r}\coloneqq \mathrm{Ker}(d\colon \Omega_{U}^{r} \to \Omega_{U}^{r+1})$.

The proof is same as the proof in \cite{jszduality} and \cite{zhaoduality}, we quickly recall the argument: since $j\colon U\to X$ is affine, we may rewrite $H^0(U, \Omega_{U}^{r})$ as $\varinjlim_EH^0(X, \Omega^i_X(\log E_{\mathrm{red}})\otimes\mathcal{O}_X(E))$. Then we use Theorem \ref{grothendieck.local} for sheaves $\Omega^i_X(\log E_{\mathrm{red}})(-E)$ to conclude that the second and the third vertical arrows are isomorphisms. Hence the assertion follows.
\end{proof}

For $r=0, i=1$, we get
\[  H^1(U_{\et},\Z/p^m\Z) \cong \varinjlim_E\Hom(H^{d}_x(X_{\et}, \DRWlog m {d} {X|E}), \Z/p^m\Z).\]

Similar to Corollary \ref{transition.surjective}, the transition maps are surjective in the projective system, for our divisor $D$ we define

\[ \Fil_D H^1(U_{\et},\Z/p^m\Z) :=\Hom(H^{d}_x(X_{\et}, \cRF), \Z/p^m\Z);\]
by Pontryagin duality, we also define
\[   \pi^{\text{ab}}_1(X,D)/p^m\coloneqq \Hom(\Fil_D H^1(U_{\et},\Z/p^m\Z), \Z/p^m\Z).\]

Theorem \ref{ramified.duality} gives us an isomorphism
\[  H^{d}_x(X_{\et}, \cRF) \xrightarrow{\cong} \pi^{\text{ab}}_1(X,D)/p^m.\]

\begin{proposition}
We have  \[ \normalfont H^{d}_{x}(X_{\nis}, \cRF)  \cong H^{d}_x(X_{\et}, \cRF).\]
\end{proposition}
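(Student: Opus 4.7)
The strategy adapts the argument of Proposition~\ref{E0nis=et} to the complete regular local setting, since that proof is essentially local in nature. The closed point $x$ has codimension $d$, and I aim to compute $H^d_x$ of $\cRF$ on both sites directly via purity. First I would reduce to the case $D$ reduced using the short exact sequence
\[ 0 \to \cRF \to W_m\Omega^d_{X|D_{\rm red},\log} \to Q \to 0 \]
on $X_\nis$ (and on $X_\et$), whose quotient $Q$ is a successive extension of coherent $\cO_{D_\nu}^{p^e}$-modules by Proposition~\ref{gr.log.form}(ii). Since coherent sheaves on the noetherian affine $X$ have the same cohomology (and hence the same local cohomology at $x$) in the Nisnevich and étale topologies, the long exact sequences of local cohomology combined with the five lemma reduce the comparison to the reduced case, where Proposition~\ref{gr.log.form}(i) identifies $\cRF$ with the absolute sheaf $\cAF$.

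Next I would compute both sides explicitly using Gersten-type resolutions. The Gros--Suwa resolution \cite{grossuwa} of $\cAF$ on $X_\nis$ has $p$-th term $\bigoplus_{y \in X^p}(i_y)_* W_m\Omega^{d-p}_{k(y),\log}$, and taking sections with support at $x$ gives
\[ H^d_x(X_\nis, \cAF) \cong W_m\Omega^0_{k,\log} \cong \Z/p^m\Z. \]
On the étale side, combining the Bloch--Gabber--Kato isomorphism $\mathcal{K}^M_{d,X}/p^m \cong \cAF$, the Gersten conjecture for Milnor $K$-theory on henselian regular local rings \cite{kerzgersten}, and étale purity \cite[Proposition 2.1]{milneduality} yields the analogous identification, so that $H^d_x(X_\et, \cAF) \cong \Z/p^m\Z$. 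The canonical comparison map induced by $\epsilon \colon X_\et \to X_\nis$ intertwines the two purity identifications and is therefore an isomorphism.

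The main obstacle I expect is tracking the étale computation carefully. One must distinguish the $H^d_x$ at issue (the \emph{geometric} piece that matches the Nisnevich Gersten answer) from the $H^{d+1}_x$ appearing in the Shiho trace map \eqref{trace.map}, which carries the additional Galois-cohomological contribution of the finite residue field $k$; both are copies of $\Z/p^m\Z$, but only the former is the target of our comparison map.
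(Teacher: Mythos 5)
Your strategy is the paper's: reduce to reduced $D$ using the coherent graded pieces from Proposition~\ref{gr.log.form}(ii), identify $\cRF$ with $\cAF$ via Proposition~\ref{gr.log.form}(i), and compare the local cohomology of $\cAF$ on the two sites through Gersten resolutions and purity; your closing remark separating the $H^d_x$ under comparison from the $H^{d+1}_x$ of the trace map~\eqref{trace.map} is also correct. The one real defect is the purity input on the \'etale side. You cite \cite[Proposition 2.1]{milneduality}, which concerns smooth varieties over a finite field; here $X=\Spec(A)$ for a complete regular local ring, which is not of finite type over $k$, so that reference does not apply. This is exactly the step the paper singles out as the only place where the argument must differ from Proposition~\ref{E0nis=et}: the purity statement needed is Shiho's \cite[Theorem 3.2]{shihopurity} for logarithmic Hodge--Witt sheaves on regular schemes --- a result you do invoke for the trace map, but not where it is actually required. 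The same caveat applies, more mildly, to the Gros--Suwa resolution on the Nisnevich side, which is likewise proved for smooth varieties; the paper instead obtains the Nisnevich computation from Kerz's Gersten resolution for Milnor $K$-theory of equicharacteristic regular local rings \cite{kerzgersten} together with Bloch--Gabber--Kato, a route you already sketch for the \'etale side. With these substitutions your argument coincides with the paper's.
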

\begin{proof}
This is similar to the argument in the proof of  Proposition \ref{E0nis=et}. Only the last step, to claim \[ H^{a}_x(X_{\nis},\cAF) \xrightarrow{\cong} H^{a}_x(X_{\et}, \cAF)\]
is an isomorphism, uses different results. In this case, it is an isomorphism since both are isomorphic to $K^M_{d-a}(k(x))/p^m=\DRWlog{m}{d-a}{x}$ by purity \cite[Theorem 3.2]{shihopurity} and the known Gersten conjecture \cite{kerzgersten}.

\end{proof}

\subsection{Class field theory via ideles}

For a complete regular local ring $A$ of dimension $d$ of characteristic $p>0$, and $X, X', U, D, D'$ as before.  An idelic description of $H^d_x(X_{\nis}, \mathcal{K}^M_{d, X|D}) $ is given by the following theorem.
\begin{theorem}{(\cite[Theorem 8.2]{kerzideles})}
There exists an isomorphism
\[ \normalfont C(X', D') \cong H^d_x(X_{\nis}, \mathcal{K}^M_{d, X|D}), \]
\end{theorem}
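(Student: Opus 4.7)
The plan is to mirror, in the local setting, the coniveau/Cousin spectral sequence strategy used in the proof of Theorem \ref{higheridele}. The starting ingredient is Kerz's Gersten conjecture for Milnor $K$-theory on the regular local scheme $X$, which provides a flasque Cousin resolution of $\mathcal{K}^M_{d, X}$ on $X_{\nis}$ with codimension-$p$ term $\bigoplus_{y \in X^p} i_{y,*} K^M_{d-p}(k(y))$. A relative refinement, controlled by the graded-piece description of Proposition \ref{grad.MK}, produces a corresponding resolution of $\mathcal{K}^M_{d, X|D}$ compatible with the modulus filtration; applying $\Gamma_x$ to it computes the local cohomology groups $H^*_x(X_{\nis}, \mathcal{K}^M_{d, X|D})$.

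Using the localization exact sequence
\[
H^{d-1}(X_{\nis}, \mathcal{K}^M_{d, X|D}) \to H^{d-1}(X'_{\nis}, \mathcal{K}^M_{d, X|D}|_{X'}) \to H^d_x(X_{\nis}, \mathcal{K}^M_{d, X|D}) \to H^d(X_{\nis}, \mathcal{K}^M_{d, X|D}) \to 0,
\]
together with the fact that $\dim X' = d-1$ kills the top-degree Nisnevich cohomology on $X'$, one reduces the computation of $H^d_x$ to the cokernel of the last Cousin differential of the restricted complex. The core step is then to reinterpret this cokernel via henselization along chains of prime ideals: using the rings $\mathcal{O}_{X, P}^h$ and residue fields $k(P)$ recalled in Subsection \ref{subsec:higherideles}, the codimension-$p$ Cousin terms rearrange into $\bigoplus_{P \in \mathcal{P},\, d(P) = d-p} K^M_{d(P)}(k(P))$, with the modulus condition at the top stratum matching $K^M_d(k(P), D(P))$, so that the surviving top-degree term is exactly the local idele group $I(X', D')$. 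The final Cousin differential then reproduces, codimension by codimension, the sum of the residue and specialization maps $Q^{P' \to P}$ appearing in the definition of $C(X', D')$, giving the desired isomorphism $\rho\colon C(X', D') \xrightarrow{\cong} H^d_x(X_{\nis}, \mathcal{K}^M_{d, X|D})$.

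The main obstacle I anticipate is the sign-and-modulus bookkeeping needed to identify the tame-symbol-type Cousin differentials with the $Q$-chain boundary operator. This can be handled by induction on codimension, which reduces the compatibility to the case of a two-dimensional complete regular local ring, where the relevant reciprocity law for the tame symbol is classical. A secondary technical point is the relative Gersten injectivity for $\mathcal{K}^M_{d, X|D}$ compatibly with henselization along chains, for which one combines Kerz's Gersten theorem \cite{kerzgersten} with the graded description of Proposition \ref{grad.MK}, exactly as exploited in the proof of Theorem \ref{relative.blochkato}.
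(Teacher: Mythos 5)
You should first note that the paper itself gives no proof of this statement: it is imported wholesale as \cite[Theorem 8.2]{kerzideles}, so the only fair comparison is with the proof in that reference. Your overall architecture --- Gersten/Cousin resolution of $\mathcal{K}^M_{d,X|D}$ via \cite{kerzgersten}, reduction of $H^d_x$ to the cokernel of the last differential, and an induction on codimension to match boundary maps with the $Q^{P'\to P}$ --- is indeed the shape of the argument in \cite{kerzideles}. But there is a genuine gap at the central step. The claim that ``the codimension-$p$ Cousin terms rearrange into $\bigoplus_{P\in\mathcal{P},\,d(P)=d-p}K^M_{d(P)}(k(P))$'' is false as a statement about the terms of the complexes. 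The Cousin term in codimension $p$ is a sum over scheme points $y\in X^p$ of groups built from the residue fields $k(y)$ (via a single henselization at $y$), whereas $k(P)$ for a Parshin chain is a product of higher local fields obtained by \emph{iterated} henselization along the whole chain; already for the generic point the Cousin term is $K^M_d(\mathrm{Frac}(A))$ while the corresponding idele summands are Milnor $K$-groups of $d$-dimensional local fields. The two complexes are not isomorphic term by term; what is true, and what constitutes the actual content of \cite[Theorem 8.2]{kerzideles}, is that a map of complexes from the chain-level (idelic) complex to the Gersten complex induces an isomorphism on the top cohomology. Establishing this requires nontrivial approximation and reciprocity arguments over the iterated henselizations $\cO^h_{X,P}$, not the ``sign-and-modulus bookkeeping'' you describe, and it does not reduce to the classical two-dimensional tame-symbol reciprocity alone.

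A secondary, fixable imprecision: in your localization sequence you should use that $X=\Spec(A)$ is henselian (indeed complete) local, so $H^i(X_{\nis},-)=0$ for $i>0$; this gives $H^d_x(X_{\nis},\mathcal{K}^M_{d,X|D})\cong H^{d-1}(X'_{\nis},\mathcal{K}^M_{d,X|D}|_{X'})$ for $d\geq 2$. The phrase ``$\dim X'=d-1$ kills the top-degree Nisnevich cohomology on $X'$'' is the opposite of what you need: the dimension bound is used to show that $H^{d-1}(X'_{\nis},-)$ \emph{is} the cokernel of the last Cousin differential (degeneration of the coniveau spectral sequence in top degree), not that it vanishes.
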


In summary, the class field theory of henselian regular local ring over $\mathbb{F}_p$ can be reformulated as follows:
\begin{corollary} There is a canonical isomorphism
\[ C(X',D')/p^m  \xrightarrow{\cong} \pi^{\text{ab}}_1(X,D)/p^m.\]
\end{corollary}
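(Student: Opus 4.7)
The plan is to chain three isomorphisms so as to reduce $C(X',D')/p^m$ to the \'etale local cohomology group that computes $\pi_1^{\mathrm{ab}}(X,D)/p^m$. First, by the idelic description \cite[Theorem 8.2]{kerzideles} just recalled, $C(X',D') \cong H^d_x(X_{\nis}, \mathcal{K}^M_{d,X|D})$. Second, Theorem~\ref{ramified.duality} (with $i=1$, $r=0$), combined with the definition of $\pi_1^{\mathrm{ab}}(X,D)/p^m$ and the \'etale--Nisnevich comparison in the proposition immediately preceding the corollary, identifies $\pi_1^{\mathrm{ab}}(X,D)/p^m$ with $H^d_x(X_{\nis}, W_m\Omega^d_{X|D,\log})$. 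The content of the corollary therefore boils down to proving a local-cohomology analogue of Corollary~\ref{mod.pm}:
\[
H^d_x(X_{\nis}, \mathcal{K}^M_{d,X|D})/p^m \cong H^d_x(X_{\nis}, W_m\Omega^d_{X|D,\log}).
\]

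To produce this isomorphism I would mimic the proof of Corollary~\ref{mod.pm}. The first step is to identify $H^d_x(X_{\nis}, \mathcal{K}^M_{d,X|D})/p^m$ with $H^d_x(X_{\nis}, \mathcal{K}^M_{d,X|D}/p^m)$; from the long exact sequence for $0 \to \mathcal{K}^M_{d,X|D} \xrightarrow{p^m} \mathcal{K}^M_{d,X|D} \to \mathcal{K}^M_{d,X|D}/p^m \to 0$ this requires $H^{d+1}_x(X_{\nis}, \mathcal{K}^M_{d,X|D}) = 0$, which follows because $X$ is henselian local (so its positive-degree Nisnevich cohomology vanishes), and therefore the localization sequence identifies $H^{d+1}_x$ with $H^d(X'_{\nis}, -)$, while the punctured spectrum $X'$ has Krull, hence Nisnevich cohomological, dimension $d-1$. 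The second step is to further replace the sheaf by $\mathcal{K}^M_{d,X|D}/(p^m\mathcal{K}^M_{d,X} \cap \mathcal{K}^M_{d,X|D})$; the kernel of this surjection is supported in $D$, so it suffices to show $H^d_x(X_{\nis}, F) = 0$ for every Nisnevich sheaf $F$ on $X$ supported in $D$. Once that is in hand, Theorem~\ref{relative.blochkato.general} delivers the identification of the target with $W_m\Omega^d_{X|D,\log}$.

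The main obstacle is the vanishing $H^d_x(X_{\nis}, F) = 0$ for $F$ supported in $D$. By henselianness of $A$ this reduces to $H^{d-1}(D'_{\nis}, F|_{D'}) = 0$, where $D' = D \setminus \{x\}$. Since $D$ has dimension $d-1$ and $x$ lies on it, cohomological dimension of $D$ alone is not enough. I would run the \v{C}ech-to-cohomology spectral sequence for the closed cover of $D$ by its smooth components $\{D_\lambda\}_{\lambda \in \Lambda}$: for any non-empty $I \subset \Lambda$ with $|I|=p+1$, the intersection $D_I := \bigcap_{\lambda \in I} D_\lambda$ is, by the SNCD hypothesis, a henselian regular local scheme of Krull dimension $d-p-1$, whence $D'_I := D_I \setminus \{x\}$ has Nisnevich cohomological dimension at most $d-p-2$. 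Hence the $E_1$-term
\[
E_1^{p,q} = \bigoplus_{|I|=p+1} H^q(D'_{I,\nis}, F|_{D'_I})
\]
vanishes whenever $p+q \geq d-1$, which forces $H^{d-1}(D'_{\nis}, F|_{D'}) = 0$ as required.

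Stitching these three isomorphisms together produces the desired isomorphism $C(X',D')/p^m \cong \pi_1^{\mathrm{ab}}(X,D)/p^m$. Canonicity follows because all constituent maps (the $d\log$-symbol of Theorem~\ref{relative.blochkato.general}, Shiho's purity trace~\eqref{trace.map}, and the idele-to-cohomology map of~\cite{kerzideles}) are natural in $X$ and $D$.
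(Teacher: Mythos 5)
Your proof is correct and follows the route the paper intends: the corollary is stated only as a summary of the preceding results, namely the idelic description $C(X',D')\cong H^d_x(X_{\nis},\mathcal K^M_{d,X|D})$, the Nisnevich--\'etale comparison of the preceding proposition, and the identification $H^d_x(X_{\et},W_m\Omega^d_{X|D,\log})\cong \pi_1^{\mathrm{ab}}(X,D)/p^m$ from Theorem~\ref{ramified.duality}, and the one link the paper leaves implicit is precisely the local analogue of Corollary~\ref{mod.pm} that you supply. Two small remarks: the closed-cover \v{C}ech spectral sequence at the end is unnecessary, since every irreducible component of $D$ passes through the closed point $x$, so $D'=D\setminus\{x\}$ already has Krull, hence Nisnevich cohomological, dimension $d-2$ and the vanishing $H^{d-1}(D'_{\nis},F|_{D'})=0$ is immediate; and for the sheaf $G$ supported in $D$ you should also record $H^{d+1}_x(X_{\nis},G)=0$ (needed for the long exact sequence to give an isomorphism rather than just a surjection), which follows by the identical dimension argument.
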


\begin{remark} The case $d=2$ has been studied in \cite{saitoCFT2dim}. The case $d=3$ has
  been investigated   in \cite{matsumi} using a slightly different class group. The $\ell$-primary analog has been studied by Sato in \cite{satoladiccft}.
\end{remark}

\section{Class field theory for schemes over discrete valuation rings} \label{sec:cftschemeslocal}
Let $R$ be a henselian discrete valuation ring with fraction field $K$, and let $k$ be its residue field of characteristic $p>0$ which we assume to be finite. We fix an uniformizer $\pi$ of $R$. We use the notation as in the following diagram:
$$\xymatrix@C=3pc@R=4pc {
	X_s \ar@{^{(}->}[r]^-{i} \ar[d]^{f_s} & X \ar[d]^f &X_{\eta}\ar[d]^{f_{\eta}}\ar@{_{(}->}[l]_-j \\
	s=\Spec(k) \ar@{^{(}->}[r]^-{i_s} &B=\Spec(R)&\eta=\Spec(K) \ar@{_{(}->}[l]_-{j_{\eta}}
}
$$
where $f$ is a flat projective of fibre dimension $d$. We assume that $X$ is a regular
scheme with smooth generic fiber  $X_{\eta}$ such that the reduced special fibre
$X_{s,\mathrm{red}}$  is a simple normal crossing divisor.  Let $\jmath\colon
U\hookrightarrow X$ be an open subscheme contained in the generic fibre such that $X\setminus U$ is the support of a
simple normal crossing divisor $D$.

\subsection{Idele class group}
We want to give an idelic description of the class group $H^{d+1}_{X_s}(X_{\nis}, \mathcal{K}^M_{d, X|D})$. We use the dimension function $d(x)=\mathrm{dim}(\overline{\{x\}})$ on $X$.
\begin{definition}
	\begin{itemize}
		\item[(i)] A $Q^{o}$-chain on ($U\subset X$) is a $Q$-chain $P=(p_0,\cdots, p_{s-2}, p_s)$ on ($U\subset X$) such that $s\geq 2$. We denote the set of $Q^{o}$-chain on ($U\subset X$) by $\mathcal{Q}^o$.
		\item[(ii)]The idele class group $C(U\subset X;X_s)$ is
		\[ C(U\subset X;X_s)\coloneqq\mathrm{Coker}(\bigoplus_{P\in
			\mathcal{Q}^{o}}K^M_{d(P)}(k(P))\oplus \bigoplus_{y\in U_\eta^{d-1}} K^M_2(k(y)) \xrightarrow{Q} I(U\subset X)); \]
		\item[(iii)] The idele class group $C(X,D;X_s)$ of $X$ relative to the effective divisor $D$ is defined as
		\[ C(X,D;X_s)\coloneqq\mathrm{Coker}(\bigoplus_{P\in \mathcal{Q}^{o}}K^M_{d(P)}(k(P)) \oplus \bigoplus_{y\in U_\eta^{d-1}} K^M_2(k(y)) \xrightarrow{Q} I(X,D)). \]
	\end{itemize}
\end{definition}

\begin{theorem}\label{idele.cft}\mbox{}
	\begin{itemize}
		\item[(i)]
		There exists a canonical isomorphism
		\[  \normalfont C(X,D;X_s) \cong H^{d+1}_{X_s}(X_{\nis}, \mathcal{K}^M_{d+1, X|D}). \]
		\item[(ii)] $\normalfont H^{d+1}(X_{\nis}, \mathcal{K}^M_{d+1, X|D})=0.$
	\end{itemize}
\end{theorem}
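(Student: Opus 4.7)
For part (i), I adapt the strategy of \cite[Theorem 8.2]{kerzideles} to the local cohomology version. Using the Gersten-type resolution for the relative Milnor $K$-sheaf $\mathcal{K}^M_{d+1,X|D}$ (from \cite{kerzgersten,ruellingsaito}), I apply the coniveau spectral sequence on $X_{\nis}$ with support in $X_s$. Since $R$ is henselian and $f$ is projective, every closed point of $X$ lies in $X_s$, so the coniveau computation identifies $H^{d+1}_{X_s}(X_{\nis},\mathcal{K}^M_{d+1,X|D})$ with the cokernel of a Gersten differential from codimension-$d$ contributions (those whose closure meets $X_s$) to codimension-$(d+1)$ contributions. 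The top term reproduces the idele group $I(X,D)$ via the henselian local structure at closed points, exactly as in the proper case treated in \cite[Theorem 8.2]{kerzideles}. The codimension-$d$ contributions split into two types: vertical curves inside $X_s$, which give the $Q^o$-chain relations (Q-chains of length $\geq 2$), and horizontal integral curves dominating $\Spec R$, which are precisely the codimension-$(d-1)$ points of $U_\eta$ and supply the extra generators $\bigoplus_{y\in U_\eta^{d-1}}K^M_2(k(y))$. Matching the Gersten residues with the idelic $Q$-maps defined earlier then yields the isomorphism.

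For part (ii), I use the localization long exact sequence
\[
H^{d+1}_{X_s}(X_{\nis},\mathcal{K}^M_{d+1,X|D})\xrightarrow{\alpha}H^{d+1}(X_{\nis},\mathcal{K}^M_{d+1,X|D})\to H^{d+1}(X_{\eta,\nis},j_\eta^*\mathcal{K}^M_{d+1,X|D}).
\]
The rightmost term vanishes because $X_\eta$ has Krull dimension $d$, hence Nisnevich cohomological dimension at most $d$. Thus $\alpha$ is surjective and it suffices to show $\alpha=0$. Comparing the coniveau/Gersten descriptions of source and target, $\alpha$ is precisely the further quotient by contributions from horizontal curves. Since $X$ is projective over the henselian DVR $R$, through every closed point $x\in X_s$ passes some horizontal integral curve $C\subset X$ meeting $U$; the Gersten residue at $x$ of a local uniformizer on $C$ produces the class $[x]$, so every generator of $H^{d+1}_{X_s}$ is killed in the quotient, forcing $\alpha=0$.

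The main technical obstacle is setting up the Gersten-type resolution of $\mathcal{K}^M_{d+1,X|D}$ on this regular scheme over a DVR with the correct modulus conditions at each codimension, and matching its differentials with the idelic $Q$-map. The appearance of the extra generator $K^M_2(k(y))$ with $y\in U_\eta^{d-1}$ corresponds exactly to the Gersten contribution of horizontal curves, so verifying this compatibility is the key bookkeeping step. Once this is in place, both parts follow by arguments in the spirit of \cite{kerzideles,katosaito}.
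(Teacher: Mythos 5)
Your overall strategy (coniveau spectral sequence with supports in $X_s$ combined with the local identification from \cite[Theorem 8.2]{kerzideles}) is the same as the paper's, but two of your key steps are not correct as stated. First, a bookkeeping error in (i): in the coniveau spectral sequence computing $H^{d+1}_{X_s}$, the $E_1^{d,0}$-term is $\bigoplus_{x\in X_1\cap X_s}H^d_x(X_{\nis},\mathcal{K}^M_{d+1,X|D})$, i.e.\ only points \emph{lying in} $X_s$ (vertical curves) contribute; horizontal curves do not appear there at all. Moreover the extra generators $\bigoplus_{y\in U_\eta^{d-1}}K^M_2(k(y))$ are not horizontal curves: a point of $U_\eta^{d-1}$ has one-dimensional closure in $X_\eta$, hence two-dimensional closure in $X$ (consistent with the occurrence of $K^M_2$ rather than $K^M_1$). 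The actual crux of (i), which your ``bookkeeping'' paragraph does not isolate, is the surjectivity of the residue map
\[
\bigoplus_{y\in U_\eta^{d-1}}K^M_2(k(y))\longrightarrow \bigoplus_{x\in X_1\cap X_s}H^d_x(X_{\nis},\mathcal{K}^M_{d+1,X|D}),
\]
proved in \cite[Sec.~6]{kerzideles}; only this identifies the cokernel defining $C(X,D;X_s)$ (which by definition and \cite[Theorem 8.2]{kerzideles} is a cokernel of the $K^M_2(k(y))$ into $\bigoplus_{x\in X_0}H^{d+1}_x$) with the cokernel computing $H^{d+1}_{X_s}$.

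Second, your proof of (ii) has a genuine gap. The groups $H^{d+1}_x(X_{\nis},\mathcal{K}^M_{d+1,X|D})$ for closed points $x$ are not generated by a single cycle class $[x]$: since every closed point lies in $\mathrm{Supp}(D)\supseteq X_s$, by \cite[Theorem 8.2]{kerzideles} each such group is a relative idele class group $C(X'_x,D'_x)$ of the punctured henselian local scheme at $x$, built out of Milnor $K$-groups of higher local fields. Producing, for each $x$, one horizontal curve through $x$ whose uniformizer has residue $[x]$ therefore only shows that the degree classes die in the quotient, not that all of $\bigoplus_{x\in X_0}H^{d+1}_x$ is hit by the Gersten differential. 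What is needed (and what the paper invokes) is the approximation-type surjectivity of
\[
\bigoplus_{x\in X_1\cap X_\eta}K^M_1(k(x))\longrightarrow\bigoplus_{x\in X_0}H^{d+1}_x(X_{\nis},\mathcal{K}^M_{d+1,X|D})
\]
from \cite[Sec.~6]{kerzideles}. Your localization-sequence framing (vanishing of $H^{d+1}$ on $X_\eta$ by cohomological dimension, then showing $\alpha=0$) is a reasonable reduction, but without this stronger surjectivity the argument does not close.
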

\begin{proof}\renewcommand{\qedsymbol}{}
	Let $\cF$ be the Nisnevich sheaf $\mathcal{K}^M_{d+1, X|D}$. We start with part (i).
	We have seen that the degeneration of the coniveau spectral sequence
	\[  E^{p,q}_{1,\nis}(\cF):=\bigoplus_{x\in X^p} H^{p+q}_x(X_{\nis}, \cF)\Longrightarrow H^{p+q}(X_{\nis}, \cF) \]
	implies
	\begin{equation}\label{coker.milnorK}  H^{d+1}_{X_s}(X_{\nis}, \cF) =\mathrm{Coker}(\bigoplus_{x\in X_{1}\cap X_s}H^{d}_x(X_{\nis}, \cF)\to \bigoplus_{x\in X_0}H^{d+1}_x(X_{\nis}, \cF)).
	\end{equation}
	By definition and \cite[Theorem 8.2]{kerzideles} we obtain an isomorphism
	\begin{equation}\label{coker.classgroup} C(X,D;X_s)\cong \mathrm{Coker}( \bigoplus_{y\in U_\eta^{d-1}} K^M_2(k(y)) \to \bigoplus_{x\in X_0}H^{d+1}_x(X_{\nis}, \cF))
	\end{equation}
	It is sufficient to observe that the canonical map
	\[
	\bigoplus_{y\in U_\eta^{d-1}} K^M_2(k(y)) \to \bigoplus_{x\in X_{1}\cap X_s}H^{d}_x(X_{\nis}, \cF)
	\]
	is surjective, see~\cite[Sec.~6]{kerzideles}. This finishes the proof of part (i).

	For part (ii) we use the isomorphism
	\[
	H^{d+1}(X_{\nis}, \cF) =\mathrm{Coker}(\bigoplus_{x\in X_{1}}H^{d}_x(X_{\nis}, \cF)\to
	\bigoplus_{x\in X_0}H^{d+1}_x(X_{\nis}, \cF))
	\]
	and the surjectivity of
	\[
	\bigoplus_{x\in X_{1}\cap X_\eta} K^M_1(k(x))\to
	\bigoplus_{x\in X_0}H^{d+1}_x(X_{\nis}, \cF),
	\]
	see~\cite[Sec.~6]{kerzideles}.
\end{proof}

Note that the generic fiber $X_{\eta}$ is a smooth variety over the local field $K$.
Its class field theory has been  studied in several cases, for example the case  $d=1$  is
well understood  by work of Bloch and Saito, see \cite{saito85cft} and \cite{hiranouchi}.
In \cite{forre15}, Forr\'e determines the kernel of the reciprocity map in unramified
$\ell$-adic class field theory in the higher dimension case.

\begin{definition}
	Assume $\mathrm{Supp}(D) \supset X_s$, we denote $D_{\eta}=D\times_X X_{\eta}$, and define
	\[  \normalfont \widehat{SK}_1(U):=\varprojlim\limits_D C(X,D;X_s)=
	\varprojlim\limits_EH^{d+1}_{X_s}(X_{\nis}, \mathcal K^M_{d+1, X|E}), \]
	where the limit is over all effective divisors $E$ with support $X\setminus U$.
	\[  SK_1(X_\eta, D_\eta):=H^d(X_{\eta,\nis}, \cRM{d+1}).\]
\end{definition}
\begin{remark}
	\begin{itemize}
		\item[(i)]We have seen that, by the degeneration of the coniveau spectral sequence, the group $\normalfont SK_1(X_\eta, D_\eta)=H^d(X_{\eta,\nis}, \cRM{d+1})$ is isomorphic to
		\begin{equation}\label{SK_coker}
		\normalfont \mathrm{coker}(\bigoplus_{y\in (X_{\eta})_1}H^{d-1}_y(X_{\eta,\nis}, \cRM{d+1})\xrightarrow{\partial} \bigoplus_{x\in (X_{\eta})_0}H^{d}_x(X_{\eta,\nis}, \cRM{d+1})).
		\end{equation}
		Using the methods from~\cite{kerzideles} it is easy to write down an idelic description of
		this group, for example if $D_{\eta}=0$ then $SK_1(X_{\eta},0)=SK_1(X_{\eta})$ where $SK_1(X_{\eta})$ is defined as
		\[ \mathrm{coker}(\bigoplus_{y\in (X_{\eta})_1}K^M_2(\kappa(y))\xrightarrow{\partial} \bigoplus_{x\in (X_{\eta})_0}\kappa(x)^{\times}). \]
		\item[(ii)] If $d=1$ and $\mathrm{Supp}(D)=X_s$, then
		$\widehat{SK}_1(U)=\widehat{SK}_1(X_{\eta})$, which has been defined in
		\cite{katosaitounramified} via the idelic method.
		\item[(iii)] By Theorem~\ref{idele.cft} we get a canonical surjection
		\[
		SK_1(X_\eta, D_\eta) \to C(X,D;X_s).
		\]
		We do not know, whether this map is an isomorphism in general, but
		Theorem~\ref{compare.l.adic.SK} suggests that it is so at least after tensoring with
		$\mathbb Z / n \mathbb Z$ for any integer $n>0$.
	\end{itemize}
\end{remark}

\subsection{Kato complexes on simple normal crossing varieties} \label{katoconj_snc}
We recall notations and theorems in \cite{jskatohomology}.
Let $Y$ be a proper simple normal crossing variety over the finite field $k$ of dimension $d$, and let $Y_1,\cdots, Y_N$ be its smooth irreducible components. Let
\[ Y_{i_1,\cdots,i_s}:=Y_{i_1}\times_Y\cdots \times_YY_{i_s} \]
be the scheme-theoretic intersection of $Y_{i_1},\cdots, Y_{i_s}$, and denote
\[  Y^{[s]}:= \coprod_{1\leq i_1 < \cdots < i_s\leq N}Y_{i_1,\cdots,i_s} \]
for the disjoint union of the $s$-fold intersections of the $Y_i$, for any $s>0$. Since $Y$ is simple, all  $Y^{[s]}$ are smooth of dimension $d-s+1$. The immersions $Y_{i_1,\cdots,i_s} \hookrightarrow Y$ and $Y_{i_1,\cdots,i_s}\hookrightarrow Y_{i_1,\cdots,\hat{i_v},\cdots,i_s}$ induce canonical maps $$i^{[s]} \colon Y^{[s]} \to Y,\quad \delta_{\nu} \colon Y^{[s]} \to Y^{[s-1]}.$$

For integer $n>0, i\geq 0$ we define the following \'etale sheaves on $Y$:
\begin{itemize}
	\item[(i)] If $p\nmid n$, then let $\Z/n\Z(i)\coloneqq \mu^{\otimes i}_{n,Y}$ be the $i$-th tensor power over $\Z/n\Z$ of the sheaf of $n$-th roots of unity.
	\item[(ii)] If $n=mp^r, r\geq 0$ with $p\nmid m$, then let
	\[  \Z/n\Z(i)\coloneqq\DRWlognc{r}{i}{Y}[-i]\oplus\mu^{\otimes i}_{m,Y}  \]
	where $\DRWlognc{r}{i}{Y}(U)\coloneqq\ker(\partial: \bigoplus_{x\in
		U^0}W_r\Omega_{x,\log}^i \to \bigoplus_{x\in U^1}W_r\Omega_{x,\log}^{i-1})$ for
	$U\subset Y$ open.
	Note that $\DRWlognc{r}{d}{Y}=\DRWlog{r}{d}{Y}$ if $Y$ is smooth \cite[1.3.2]{satoncv}.
\end{itemize}
The Kato complex $C^{1,0}(Y,\Z/n\Z(d))$ is defined to be the  complex:
\begin{multline*}
\bigoplus_{y\in Y^0}H^{d+1}(y, \Z/n\Z(d)) \to \bigoplus_{y\in Y^1}H^{d}(y,\Z/n\Z(d-1)) \to \cdots \\
\cdots \to \bigoplus_{y\in Y^a}H^{d-a+1}(y, \Z/n\Z(d-a))\to \cdots \to \bigoplus_{y\in Y^d} H^{1}(y, \Z/n\Z) ,
\end{multline*}
where $\Z/n\Z(i)$ is defined as above for the residue field of $Y$ at $y$, and put the term $\bigoplus_{y\in Y^a}$ in degree $a-d$ as an object in derived category. Similarly, for each $s$, on $Y^{[s]}$ we define  the complex  $C^{1,0}(Y^{[s]}, \Z/n\Z(d-s+1))$, and moreover we define the complex $C({Y}^{\bullet}, \Z/n\Z)$ as
\[ \cdots \to (\Z/n\Z)^{\pi_0(Y^{[s+1]})} \xrightarrow{d_s}  (\Z/n\Z)^{\pi_0(Y^{[s]})} \cdots \to (\Z/n\Z)^{\pi_0(Y^{[1]})}, \]
where $\pi_0(Z)$ is the set of connected components of a scheme $Z$, the last term of this complex is placed in degree $0$, and the differential $d_s$ is $\sum_{\nu=1}^{s+1}(-1)^{\nu+1}(\delta_{\nu})_*$.
\begin{theorem}{(\cite[Proposition 3.6 and Theorem 3.9]{jskatohomology})}\label{katoconjsncv}
	\begin{itemize}
		\item[(i)]There is a spectral sequence \[  E^1_{s,t}({Y}^{\bullet},\Z/n\Z)=H_t(C^{1,0}(Y^{[s+1]},\Z/n\Z(d-s)) \Rightarrow H_{s+t}(C^{1,0}(Y, \Z/n\Z(d)))  \]
		in which the differentials $d^1_{s,t}=\sum_{\nu=1}^{s+1}(-1)^{\nu+1}(\delta_{\nu})_*$.
		\item[(ii)] We have  $E^1_{s,t}({Y},\Z/n\Z)=0$ if $t<0$, and hence there are canonical edge morphisms
		\[ e_a^{\mathcal{Y},p^m} \colon H_a(C^{1,0}(Y, \Z/n\Z(d))) \to E^2_{a,0}({Y}^{\bullet},\Z/n\Z).  \]
		\item[(iii)] The trace map induces a canonical isomorphism
		\[ \mathrm{tr}\colon E^2_{a,0}({Y}^{\bullet},\Z/n\Z) \to H_a(C({Y}^{\bullet},\Z/n\Z)); \]
		\item[(iv)] The composite of edge and trace morphisms gives us a canonical map
		\[  \gamma^{Y,p^m}_a \colon H_a(C^{1,0}(Y, \Z/n\Z(d))) \to H_a(C(Y^{\bullet},\Z/n\Z)),\]
		which is an isomorphism if $0\leq a \leq 4$.
	\end{itemize}
\end{theorem}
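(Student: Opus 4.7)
The plan is to construct the spectral sequence in (i) by exploiting the simplicial structure of the simple normal crossing variety $Y$. The first step would be to observe that every point $y \in Y$ lies generically on a unique deepest stratum: there is a unique $s\geq 0$ such that $y$ is a point of $Y^{[s+1]}$ but does not lie on any $Y^{[s+2]}$, and in that case its residue field and local Galois cohomology coincide, via $i^{[s+1]}$, with those computed on $Y^{[s+1]}$. This yields a direct sum decomposition of each term of $C^{1,0}(Y,\Z/n\Z(d))$ indexed by $s$. Organizing the pieces via the face maps $\delta_\nu \colon Y^{[s+1]}\to Y^{[s]}$ and the alternating sum $\sum_\nu (-1)^{\nu+1}(\delta_\nu)_\ast$ assembles them into a double complex, and the associated first-quadrant spectral sequence is the one in (i), with $E^1_{s,t}$ given by the $t$-th Kato homology of $Y^{[s+1]}$.

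For (ii), the vanishing $E^1_{s,t}=0$ for $t<0$ is immediate from the very definition of $C^{1,0}(Y^{[s+1]},\Z/n\Z(d-s))$: since $Y^{[s+1]}$ is smooth of dimension $d-s$, its Kato complex is supported in homological degrees $0,1,\dots,d-s$ and vanishes below. Consequently the spectral sequence lies in the first quadrant, producing the standard edge morphism $e_a^{Y,p^m}$.

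For (iii), the key input is the Kato conjecture for the smooth proper varieties $Y^{[s+1]}$ over the finite field $k$. I would use the results of Kerz--Saito and Jannsen--Saito to conclude that, in the relevant range, $H_t(C^{1,0}(Y^{[s+1]},\Z/n\Z(d-s)))=0$ for $t>0$ while $H_0$ is the free $\Z/n\Z$-module on $\pi_0(Y^{[s+1]})$. This collapses the $t$-direction to the row $t=0$ and identifies the $E^1$-row for $t=0$ with the combinatorial complex $C(Y^\bullet,\Z/n\Z)$, so that $E^2_{a,0}\cong H_a(C(Y^\bullet,\Z/n\Z))$ via the trace map on each connected component. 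Statement (iv) then follows formally from (i)--(iii): under the degeneration the composite of the edge morphism with the trace isomorphism produces $\gamma^{Y,p^m}_a$, and it is an isomorphism in precisely the range where the Kato conjecture input is available on every $Y^{[s+1]}$. The main obstacle is exactly this Kato conjecture input on the smooth strata; the restriction $0\leq a\leq 4$ reflects the best unconditional range given by \cite{kerzsaitoIHES} and \cite{jskatohomology}, and any improvement in the Kato conjecture for smooth proper varieties would immediately extend (iv) to higher $a$.
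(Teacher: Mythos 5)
First, a remark on the ground truth: the paper does not prove this statement at all --- it is quoted from Jannsen--Saito \cite{jskatohomology} (Proposition 3.6 and Theorem 3.9) --- so your sketch has to be measured against the argument given there. Your outline of (ii)--(iv) is broadly the right shape, modulo one imprecision worth fixing: part (iii) only uses the degree-zero computation $H_0(C^{1,0}(Z,\Z/n\Z(e)))\cong(\Z/n\Z)^{\pi_0(Z)}$ via the trace map for $Z$ smooth proper over a finite field (unramified class field theory), whereas the vanishing of the higher Kato homology of the $Y^{[s+1]}$ in low degrees is what is responsible for the range $0\leq a\leq 4$ in (iv); conflating the two makes (iii) look conditional when it is not.

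The genuine gap is in your construction of the spectral sequence in (i). The decomposition of $C^{1,0}(Y,\Z/n\Z(d))$ by ``deepest stratum'' does exist as a direct sum of graded groups, but it cannot assemble into a double complex whose columns are the Kato complexes of the proper smooth schemes $Y^{[s+1]}$: a point $y$ lying on exactly $r$ components occurs once in your decomposition (in the piece $s=r-1$), but occurs $\binom{r}{s+1}$ times in $C^{1,0}(Y^{[s+1]},\Z/n\Z(d-s))$ for every $s\leq r-1$, so the sizes do not match. What your decomposition actually produces is the filtration of $C^{1,0}(Y)$ by the number of components through a point, whose associated graded pieces are Kato complexes of the \emph{open} strata $Y^{[s+1]}\setminus(\text{deeper strata})$; these are not proper, their Kato homology is not $H_t(C^{1,0}(Y^{[s+1]},\Z/n\Z(d-s)))$, and the face maps $(\delta_\nu)_*$ do not act on these pieces in the pattern you need. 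The construction that works is the opposite one: form the double complex $D_{s,t}=C^{1,0}(Y^{[s+1]},\Z/n\Z(d-s))_t$ with horizontal differential $\sum_\nu(-1)^{\nu+1}(\delta_\nu)_*$ and vertical differential the Kato residue maps (checking that these commute with proper pushforward up to sign), and prove that the augmentation $(i^{[1]})_*$ makes each row
\[
\cdots\to D_{2,t}\to D_{1,t}\to D_{0,t}\to C^{1,0}(Y,\Z/n\Z(d))_t\to 0
\]
exact. This is verified pointwise: over a fixed $y$ lying on the set $S(y)$ of components with $|S(y)|=r$, the row is the group $H^{d-a+1}(y,\Z/n\Z(d-a))$ tensored with the augmented simplicial chain complex of the $(r-1)$-simplex on the vertex set $S(y)$, which is acyclic. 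This exactness of the augmented semi-simplicial resolution is the key idea missing from your write-up; without it you obtain neither the $E^1$-terms $H_t(C^{1,0}(Y^{[s+1]},\Z/n\Z(d-s)))$ nor the abutment $H_{s+t}(C^{1,0}(Y,\Z/n\Z(d)))$.
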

\begin{remark}
	In the following, we need the cases $a=1$ and $a=2$, which will give us an explicit description of $E_2$-terms of certain coniveau spectral sequences.
\end{remark}
\subsection{The $\ell$-primary part}

Let $\ell$ be a prime number and $\ell\neq p$.
The cup product induces the following morphism
\[  R\jmath_*\mu_{\ell^m,U}^{\otimes r} \to R\jmath_*\cHom_U(\mu_{\ell^m,U}^{\otimes d+1-r}, \mu_{\ell^m,U}^{\otimes d+1}). \]
As $\mu_{\ell^m,U}^{\otimes d+1}=\jmath^*\mu_{\ell^m,X}^{\otimes d+1}$ the adjoint pair
$(\jmath_!,\jmath^*)$ gives an isomorphism
\[  R\jmath_*R\cHom_U(\mu_{\ell^m,U}^{\otimes d+1-r}, \mu_{\ell^m,U}^{\otimes d+1})=R\cHom_X(\jmath_!\mu_{\ell^m,U}^{\otimes d+1-r}, \mu_{\ell^m,X}^{\otimes d+1}). \]
Using the adjoint pair $(i_*, Ri^!)$ and these two maps above, we obtain a pairing on $X_{\et}$:
\begin{equation}
i^*R\jmath_*\mu_{\ell^m,U}^{\otimes r}\otimes^L Ri^!\jmath_!\mu_{\ell^m,U}^{\otimes d+1-r} \to Ri^!\mu_{\ell^m,X}^{\otimes d+1}.
\end{equation}
Therefore a pairing of cohomology groups:
\begin{equation}\label{pairing.l.adic}
H^i(U_{\et}, \mu_{\ell^{m},U}^{\otimes r}) \times H^{j}_{X_s}(X_{\et}, \jmath_!\mu_{\ell^{m},U}^{\otimes d+1-r}) \to H^{i+j}_{X_s}(X_{\et}, \mu_{\ell^{m},X}^{\otimes d+1}).
\end{equation}

We have the following duality theorem, see~\cite[Thm.~7.5]{geisserduality}.
\begin{theorem}
	\begin{itemize}
		\item[(i)] There is a canonical isomorphism, so called the trace map,
		\[\normalfont \mathrm{Tr}\colon H^{2d+3}_{X_s}(X_{\et}, \mu_{\ell^{m},X}^{\otimes d+1}) \xrightarrow{\cong}  \Z/\ell^m\Z  \]
		\item[(ii)] The trace map Tr and the pair (\ref{pairing.l.adic}) induce a perfect pairing of finite groups
		\[ \normalfont H^i(U_{\et}, \mu_{\ell^{m},U}^{\otimes r}) \times H^{2d+3-i}_{X_s}(X_{\et}, \jmath_!\mu_{\ell^{m},U}^{\otimes d+1-r}) \to H^{2d+3}_{X_s}(X_{\et}, \mu_{\ell^{m},X}^{\otimes d+1})  \xrightarrow{\mathrm{Tr}} \Z/\ell^m\Z\]
	\end{itemize}
\end{theorem}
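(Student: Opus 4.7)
The plan is to reduce both parts to Geisser's \'etale duality \cite[Thm.~7.5]{geisserduality} for proper regular schemes over a Henselian discrete valuation ring, applied to our $X/R$.

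For part (i), I would construct the trace as the composite of two isomorphisms. First, the localization long exact sequence on $X$ with support along $X_s$ reads
\[
H^{2d+2}(X_{\et}, \mu_{\ell^m,X}^{\otimes d+1}) \to H^{2d+2}(X_{\eta,\et}, \mu_{\ell^m}^{\otimes d+1}) \to H^{2d+3}_{X_s}(X_\et, \mu_{\ell^m,X}^{\otimes d+1}) \to H^{2d+3}(X_\et, \mu_{\ell^m,X}^{\otimes d+1}).
\]
Since $X$ is proper of relative dimension $d$ over a Henselian DVR with finite residue field, standard cohomological dimension bounds (proper base change reduces to the SNC special fiber, with Artin vanishing on the components) force the outer groups to vanish, producing an isomorphism onto $H^{2d+2}(X_{\eta,\et}, \mu_{\ell^m}^{\otimes d+1})$. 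The latter is identified with $\Z/\ell^m\Z$ by Poincar\'e duality on the smooth proper $d$-dimensional variety $X_\eta/K$ together with the local invariant $H^2(K,\mu_{\ell^m})\cong\Z/\ell^m\Z$ of local class field theory.

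For part (ii), I would verify that the pairing assembled at the level of complexes via the cup product together with the adjunctions $(\jmath_!,\jmath^*)$ and $(i_*,Ri^!)$ agrees with the Artin--Verdier pairing of \cite[Thm.~7.5]{geisserduality} applied to the constructible sheaf $\jmath_!\mu_{\ell^m}^{\otimes r}$ on $X$. Perfectness and finiteness then follow directly from that theorem; finiteness is in addition ensured by the finiteness of \'etale cohomology of $X_\eta$ over the local field $K$ combined with properness of $f$. The case $r=0$ serves as a sanity check: the pairing degenerates to the Artin--Verdier pairing between $H^*(U_\et,\Z/\ell^m\Z)$ and local cohomology of the Tate-twisted dualizing sheaf, recovering the classical picture.

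The main technical obstacle will be the compatibility between the explicit cup-product pairing and the Artin--Verdier pairing constructed abstractly via a dualizing complex. One must trace through Gabber's absolute cohomological purity to identify $Ri^!\mu_{\ell^m,X}^{\otimes d+1}$ and verify that the resulting trace matches the one constructed in part (i). Additional care is required when $U$ is strictly smaller than $X_\eta$: the horizontal components of $X\setminus U$ contained in the generic fiber contribute nontrivially to $Ri^!\jmath_!$ via their own purity statements, and these contributions must be reconciled with Geisser's formulation. Once this compatibility is established, both statements follow as immediate corollaries.
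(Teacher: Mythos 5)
Your proposal matches the paper's treatment: the paper states this theorem with no proof beyond the citation to Geisser \cite[Thm.~7.5]{geisserduality}, which is exactly the result you reduce everything to. The additional detail you supply --- constructing the trace via the localization sequence for $(X_s,X_\eta)$, proper base change over the henselian base plus $\mathrm{cd}_\ell(X_s)\leq 2d+1$ to kill the outer terms, the Hochschild--Serre/local-invariant identification of $H^{2d+2}(X_{\eta,\et},\mu_{\ell^m}^{\otimes d+1})$ with $\Z/\ell^m\Z$, and the compatibility of the cup-product pairing with Geisser's --- fills in steps the paper leaves implicit and is sound.
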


For $r=0, i=1$, we obtain
\[  H^1(U_{\et},\Zlm) \cong \Hom(H^{2d+2}_{X_s}(X_{\et}, \jmath_!\mu_{\ell^{m}}^{\otimes d+1}), \Zlm),\]
and by Pontryagin duality
\begin{equation}\label{et.fund=et.coh}
H^{2d+2}_{X_s}(X_{\et}, \jmath_!\mu_{\ell^{m}}^{\otimes d+1})\cong \pi^{\text{ab}}_1(U)/\ell^m. \end{equation}

For any abelian sheaf $\cF$ on $X_{\nis}$ or $X_{\et}$, we have the following two coniveau spectral sequences:
\[  E^{p,q}_{1,\et}(\cF):=\bigoplus_{x\in X^p\cap X_s} H^{p+q}_x(X_{\et}, \cF)\Longrightarrow H^{p+q}_{X_s}(X_{\et}, \cF), \]
\[  E^{p,q}_{1,\nis}(\cF):=\bigoplus_{x\in X^p \cap X_s} H^{p+q}_x(X_{\nis}, \cF)\Longrightarrow H^{p+q}_{X_s}(X_{\nis}, \cF) .\]
\begin{proposition}\label{coniveau.l.adic}
	\mbox{}
	\begin{itemize} 
		\item [(i)]  $\normalfont E^{\bullet,d+2}_{1,\et}(\jmath_!\mu_{\ell^m,U}^{\otimes d+1}) \cong E^{\bullet,d+2}_{1,\et}(\mu_{\ell^m,X}^{\otimes d+1})$.
		\item [(ii)] The local Chern class map induces a surjection
		$ \normalfont E^{\bullet,0}_{1,\nis}(\mathcal{K}^M_{d+1, X|D})/\ell^m
		\twoheadrightarrow
		E^{\bullet,d+1}_{1,\et}(\jmath_!\mu_{\ell^m,U}^{\otimes d+1})$ and an isomorphism $\normalfont E^{d+1,0}_{1,\nis}(\mathcal{K}^M_{d+1, X|D})/\ell^m \cong E^{d+1,d+1}_{1,\et}(\jmath_!\mu_{\ell^m,U}^{\otimes d+1}).$
	\end{itemize}
\end{proposition}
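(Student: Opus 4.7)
The plan is to mirror the strategy of Proposition~\ref{ladic.top}, carrying out the analogous argument in the relative setting over the henselian DVR $R$. For part~(i), I would prove by induction on the codimension $a$ that for every $x \in X^a$ (regardless of whether $x \in X_s$ or $x \in X_\eta$) the canonical morphism
\[
H^{a+d+2}_x(X_\et, \jmath_!\mu_{\ell^m,U}^{\otimes d+1})
\xrightarrow{\cong}
H^{a+d+2}_x(X_\et, \mu_{\ell^m,X}^{\otimes d+1})
\]
is an isomorphism. For the base case $a=1$ with $x \in X^1 \cap X_s$, the henselization $X_x = \Spec(\cO_{X,x}^h)$ is a henselian DVR whose generic point $\eta_{X_x}$ equals the generic point of $X$, hence lies in $U$. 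Thus $\jmath_!\mu^{\otimes d+1}|_{X_x} = j_!\mu^{\otimes d+1}$ for $j\colon \eta_{X_x} \hookrightarrow X_x$. The short exact sequence $0 \to j_!\mu^{\otimes d+1} \to \mu^{\otimes d+1} \to i_{x,*}\mu^{\otimes d+1} \to 0$ on $X_x$, combined with the henselian base change $H^i(X_{x,\et}, \mu^{\otimes d+1}) \cong H^i(x_\et, \mu^{\otimes d+1})$ valid since $\ell \neq p$, forces $H^i(X_{x,\et}, j_!\mu^{\otimes d+1}) = 0$ for every $i$. The localization sequence for $j$ together with the vanishing $H^i(x, \mu^{\otimes d+1}) = 0$ for $i > \mathrm{cd}_\ell(k(x)) = d+1$ then identifies both $H^{d+3}_x(X_{x,\et}, j_!\mu^{\otimes d+1})$ and $H^{d+3}_x(X_{x,\et}, \mu^{\otimes d+1})$ with $H^{d+2}(\eta_{X_x,\et}, \mu^{\otimes d+1})$. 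The case $a=1$, $x \in X^1 \cap X_\eta$ is treated analogously.

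The inductive step for $a \geq 2$ proceeds exactly as in Proposition~\ref{ladic.top}: one exploits the coniveau spectral sequence on $Y_x = X_x \setminus \{x\}$, uses the cohomological dimension bound $\mathrm{cd}_\ell(k(x)) = d+2-a$ to obtain the shift isomorphism $H^{a+d+1}(Y_{x,\et}, \cF) \cong H^{a+d+2}_x(X_{x,\et}, \cF)$ for both $\cF = \jmath_!\mu^{\otimes d+1}$ and $\cF = \mu^{\otimes d+1}$, and applies the induction hypothesis to the finitely many $y \in Y_x$ of codimension $\le a-1$ in $X$ (some of which may lie in $X_\eta$, but these are covered because the induction runs over all of $X^{a-1}$). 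Restricting the resulting isomorphism to $x \in X^p \cap X_s$ yields the desired identification of $E_1$-complexes of part~(i). Unlike the second assertion of Proposition~\ref{ladic.top}, no appeal to the Kato conjecture is needed here since we only claim the isomorphism of $E_1$-complexes and not the vanishing of an $E_2$-term.

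For part~(ii), I would adapt the construction of Sato \cite{satoladiccft} (recalled in Theorem~\ref{Sato.ladic}) to produce localized Chern class maps
\[
\mathrm{cl}^{d+1,\mathrm{loc}}_{X,D,x,\ell^m} \colon
H^a_x(X_\nis, \mathcal{K}^M_{d+1, X|D})/\ell^m \longrightarrow
H^{a+d+1}_x(X_\et, \jmath_!\mu_{\ell^m,U}^{\otimes d+1})
\]
for each $x \in X^a \cap X_s$, built by induction on $a$ via the Galois symbol together with the Gersten/Cousin filtration on both sides. For $a = d+1$ (so $x$ a closed point of $X_s$ with finite residue field $k(x)$), the map reduces via the Gersten resolution of $\mathcal{K}^M_{d+1, X}$ \cite{kerzgersten} and absolute purity on the \'etale side to the Bloch-Kato isomorphism $K^M_{d+1}(k(x))/\ell^m \cong H^{d+1}(k(x), \mu_{\ell^m}^{\otimes d+1})$, giving the claimed bijectivity. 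Surjectivity for $a \leq d$ then follows by a diagram chase comparing the Cousin complex of $\mathcal{K}^M_{d+1, X|D}/\ell^m$ with the coniveau $E_1$-row at $q = d+1$ on the \'etale side, using the bijectivity at the top and the compatibility of symbols with residue maps. The main obstacle I anticipate is the careful construction of these Chern class maps in the arithmetic setting: since $X$ is regular but not smooth over a field, one must verify that Sato's induction-on-codimension argument and his use of Galois symbols go through unchanged; the validity of Gersten's conjecture for Milnor $K$-theory on regular local rings \cite{kerzgersten} should supply the necessary input, and compatibility with the modulus $\mathcal K^M_{d+1, X|D}$ and with $\jmath_!$ should follow formally from the support conditions.
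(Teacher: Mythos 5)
Your proposal follows exactly the route the paper intends: the published proof of Proposition~\ref{coniveau.l.adic} consists of the single remark that part (i) is proved as in Proposition~\ref{ladic.top} and part (ii) is Sato's localized Chern class construction as in Theorem~\ref{Sato.ladic}, and your write-up fills in precisely those two arguments (correctly running the induction in (i) over all of $X^a$ rather than only $X^a\cap X_s$, and correctly observing that no Kato-conjecture input is needed at the $E_1$-level). One small index slip in part (ii): at a closed point $x\in X_s$, of codimension $d+1$, the Gersten and purity identifications give $K^M_0(k(x))/\ell^m\cong H^0(x_{\et},\Z/\ell^m\Z)$ (both $\cong\Z/\ell^m\Z$), not the Bloch--Kato isomorphism for $K^M_{d+1}(k(x))$, which vanishes for a finite residue field once $d\geq 1$.
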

\begin{proof}
	The argument is analogous to that that in Section \ref{l-part.over.finite fields}.
	More precisely, part $(i)$ corresponds to Proposition~\ref{ladic.top} and part
	(ii) corresponds to Theorem~\ref{Sato.ladic}.
\end{proof}
\begin{corollary}
	There are canonical isomorphisms\[ \normalfont H^{d+1}_{X_s}(X_{\nis}, \mathcal{K}^M_{d+1,X|D})/\ell^m\cong E^{d+1,0}_{2,\nis}(\mathcal{K}^M_{d+1, X|D})/\ell^m \cong E^{d+1,d+1}_{2,\et}(\jmath_!\mu_{\ell^m,U}^{\otimes d+1}).\]
\end{corollary}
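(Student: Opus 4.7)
My plan is to establish both isomorphisms by comparing coniveau spectral sequences, in the spirit of the analogous arguments in Section~\ref{sec:cftvarieties} and earlier in this section.

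For the first isomorphism, I would exploit the Gersten-type behavior of the relative Milnor $K$-sheaf $\cRM{d+1}$. Since $\cRM{d+1}$ embeds into $\mathcal{K}^M_{d+1,X}$ by \cite[Corollary~2.9]{ruellingsaito}, and the latter enjoys a Gersten resolution \cite{kerzgersten}, one gets $H^{p+q}_x(X_{\nis}, \cRM{d+1}) = 0$ for $q \neq 0$. Hence $E^{p,q}_{1,\nis}(\cRM{d+1}) = 0$ for $q > 0$, so the Nisnevich coniveau spectral sequence is concentrated in its bottom row; combined with $E^{p,q}_{1,\nis} = 0$ for $p > d+1 = \dim X$, no non-trivial differentials enter or leave $E^{d+1,0}_r$ for $r \geq 2$, and therefore
\[ H^{d+1}_{X_s}(X_{\nis}, \cRM{d+1}) \;\cong\; E^{d+1,0}_{2,\nis}(\cRM{d+1}) \;=\; \mathrm{coker}\bigl( E^{d,0}_{1,\nis}(\cRM{d+1}) \to E^{d+1,0}_{1,\nis}(\cRM{d+1}) \bigr). \]
Right-exactness of $-\otimes_{\Z}\Z/\ell^m\Z$ then yields the first isomorphism of the corollary.

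For the second isomorphism, I would invoke Proposition~\ref{coniveau.l.adic}(ii), which supplies a map of $E_1$-complexes
\[ E^{\bullet,0}_{1,\nis}(\cRM{d+1})/\ell^m \;\twoheadrightarrow\; E^{\bullet,d+1}_{1,\et}(\jmath_!\mu_{\ell^m,U}^{\otimes d+1}) \]
that is componentwise surjective and an isomorphism in degree $d+1$. Since $X^{d+2}=\emptyset$, on both sides the $E_2$-term at position $d+1$ is the cokernel of the preceding $E_1$-differential. A short diagram chase on
\[ \xymatrix{
E^{d,0}_{1,\nis}(\cRM{d+1})/\ell^m \ar@{->>}[d] \ar[r] & E^{d+1,0}_{1,\nis}(\cRM{d+1})/\ell^m \ar[d]^{\cong} \\
E^{d,d+1}_{1,\et}(\jmath_!\mu_{\ell^m,U}^{\otimes d+1}) \ar[r] & E^{d+1,d+1}_{1,\et}(\jmath_!\mu_{\ell^m,U}^{\otimes d+1})
} \]
---with left vertical surjective and right vertical bijective---then produces the claimed isomorphism on cokernels: surjectivity is immediate from the right vertical surjection, while injectivity follows by lifting any boundary witness in the bottom row through the left vertical surjection and using that the right vertical is an isomorphism.

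I do not anticipate any serious obstacle. The Gersten-type resolution invoked in step one is exactly the input already used in Theorem~\ref{idele.cft} and Proposition~\ref{E0nis=et}, and step two is a purely formal consequence of Proposition~\ref{coniveau.l.adic}(ii); the genuine content of the corollary has already been absorbed into that proposition.
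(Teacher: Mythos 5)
Your proposal is correct and follows essentially the same route as the paper: degeneration of the Nisnevich coniveau spectral sequence (with supports in $X_s$) gives the identification of $H^{d+1}_{X_s}(X_{\nis},\cRM{d+1})$ with the cokernel $E^{d+1,0}_{2,\nis}$, and the second isomorphism is exactly the cokernel diagram chase of Corollary~\ref{ladic.nis=et} fed with Proposition~\ref{coniveau.l.adic}(ii). One small caution: justify the vanishing $E^{p,q}_{1,\nis}=0$ for $q>0$ by the general cohomological dimension bound (as the paper does), rather than by a Gersten resolution of $\mathcal{K}^M_{d+1,X}$, since the latter is only cited for regular schemes over a field whereas here $X$ lives over the discrete valuation ring $R$.
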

\begin{proof}
	The degenerating coniveau spectral sequence on $X_{\nis}$ gives the first isomorphism. The
	second isomorphism results from the same argument as in Corollary~\ref{ladic.nis=et} using Proposition~\ref{coniveau.l.adic}(ii).
\end{proof}
By purity
the complex $E^{\bullet,d+2}_{1,\et}(\mu_{\ell^m,X}^{\otimes d+1})$ is isomorphic
to the  Kato complex $C^{1,0}(X_s, \Zlm(d))$ from Subsection \ref{katoconj_snc} (up to a shift), i.e.~to

\begin{multline*}
\bigoplus_{y\in X_s^0}H^{d+1}(y, \Zlm(d)) \to \bigoplus_{y\in X_s^1}H^{d}(y,\Zlm(d-1)) \to \cdots \\
\cdots \to \bigoplus_{y\in X_s^a}H^{d-a+1}(y, \Zlm(d-a))\to \cdots \to \bigoplus_{y\in X_s^d} H^{1}(y, \Zlm) ,
\end{multline*}
where we set the last term in degree $0$ as an object in the derived category.

\begin{theorem}\label{nis.class.to.et.class}
	The canonical morphism
	\[ \normalfont H^{d+1}_{X_s}(X_{\nis}, \mathcal{K}^M_{d+1, X|D})/\ell^m \to H^{2d+2}_{X_s}(X_{\et},  \jmath_!\mu_{\ell^{m}}^{\otimes d+1})\]
	fits into an exact sequence
	\begin{equation}
	\footnotesize \normalfont H_2(C(X_s^{\bullet},\Z/\ell^m\Z)) \to  H^{d+1}_{X_s}(X_{\nis}, \mathcal{K}^M_{d+1, X|D})/\ell^m  \to  H^{2d+2}_{X_s}(X_{\et},  \jmath_!\mu_{\ell^{m}}^{\otimes d+1})\to  H_1(C(X_s^{\bullet},\Z/\ell^m\Z)) \to 0.
	\end{equation}
\end{theorem}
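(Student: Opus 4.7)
The plan is to run the étale coniveau spectral sequence
\[
E^{p,q}_{1,\et}(\jmath_!\mu_{\ell^m,U}^{\otimes d+1}) \Rightarrow H^{p+q}_{X_s}(X_\et, \jmath_!\mu_{\ell^m,U}^{\otimes d+1})
\]
and analyze it in total degree $2d+2$. By étale cohomological dimension arguments analogous to those of Subsection~\ref{l-part.over.finite fields}, only the rows $q = d+1$ and $q = d+2$ are nontrivial, so $d_2$ is the only potentially nonzero higher differential and the spectral sequence collapses at $E_3$. In total degree $2d+2$ the only contributions therefore come from $(p,q) = (d+1, d+1)$ and $(p,q) = (d, d+2)$. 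Since $X$ has dimension $d+1$ we have $E^{d+2, d+1}_2 = 0$, so $E^{d, d+2}_\infty = E^{d, d+2}_2$, while $E^{d+1, d+1}_\infty = \mathrm{coker}(d_2\colon E^{d-1, d+2}_2 \to E^{d+1, d+1}_2)$. Splicing the resulting short exact sequences produces
\[
E^{d-1, d+2}_2 \xrightarrow{d_2} E^{d+1, d+1}_2 \to H^{2d+2}_{X_s}(X_\et, \jmath_!\mu_{\ell^m}^{\otimes d+1}) \to E^{d, d+2}_2 \to 0.
\]

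The next step is to identify each of the four $E_2$ entries with the groups appearing in the theorem. By Proposition~\ref{coniveau.l.adic}(i), the top row $E^{\bullet, d+2}_{1,\et}(\jmath_!\mu_{\ell^m}^{\otimes d+1})$ agrees with $E^{\bullet, d+2}_{1,\et}(\mu_{\ell^m, X}^{\otimes d+1})$, which via purity is the Kato complex $C^{1,0}(X_s, \Z/\ell^m\Z(d))$ up to a degree shift (as recalled in the paragraph preceding the theorem). Under this identification $E^{d, d+2}_2$ corresponds to the first Kato homology of $C^{1,0}(X_s, \Z/\ell^m\Z(d))$ and $E^{d-1, d+2}_2$ to the second. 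Applying Theorem~\ref{katoconjsncv}(iv) in the range $a \in \{1,2\}$, which is covered by the hypothesis $0 \leq a \leq 4$, one obtains canonical isomorphisms
\[
E^{d, d+2}_2 \cong H_1(C(X_s^\bullet, \Z/\ell^m\Z)), \qquad E^{d-1, d+2}_2 \cong H_2(C(X_s^\bullet, \Z/\ell^m\Z)).
\]
For the bottom row, the Corollary immediately preceding the theorem provides a canonical isomorphism $H^{d+1}_{X_s}(X_\nis, \mathcal{K}^M_{d+1, X|D})/\ell^m \cong E^{d+1, d+1}_{2, \et}(\jmath_!\mu_{\ell^m}^{\otimes d+1})$, whose composition with the edge map $E^{d+1, d+1}_2 \twoheadrightarrow E^{d+1, d+1}_\infty \hookrightarrow H^{2d+2}_{X_s}$ is, by construction of the localised Chern class map, exactly the canonical morphism in the statement. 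Substituting these four identifications into the five-term sequence above produces the claimed exact sequence.

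The main obstacle I anticipate is the verification of the cohomological-dimension vanishings that rule out rows of $E_1$ other than $q = d+1, d+2$. For $x \in X^p \cap X_s$ the residue field $k(x)$ has transcendence degree $d+1-p$ over the finite field $k$, hence $\ell$-cohomological dimension $d+2-p$; one then has to trace through the standard purity and dimension bookkeeping for $\jmath_! \mu_{\ell^m,U}^{\otimes d+1}$ on the henselian local rings $\cO_{X,x}^h$ to eliminate local cohomology outside the top two degrees. This essentially transfers the argument of Subsection~\ref{l-part.over.finite fields} from the absolute to the relative situation. A smaller but genuine point is compatibility of the spectral-sequence differential $d_2$ with the identifications coming from Proposition~\ref{coniveau.l.adic}(i) and Theorem~\ref{katoconjsncv}(iv), which follows from naturality of the purity isomorphism and the Kato edge map.
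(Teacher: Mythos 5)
Your proposal is correct and follows essentially the same route as the paper: the four-term exact sequence extracted from the \'etale coniveau spectral sequence in total degree $2d+2$, with the $E_2$-terms identified via Proposition~\ref{coniveau.l.adic} and Theorem~\ref{katoconjsncv}. The only imprecision is your claim that all rows of $E_1$ other than $q=d+1,d+2$ vanish --- lower rows need not vanish for $\jmath_!\mu_{\ell^m}^{\otimes d+1}$ --- but all that is needed is $E_1^{p,q}=0$ for $q>d+2$ or $p>d+1$, which already forces the stated four-term sequence in the top total degree.
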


\begin{proof}
	By the coniveau spectral sequence for $\cF=\jmath_!\mu_{\ell^m,U}^{\otimes d+1}$ on $X_{\et}$, we have an exact sequence:
	\[ E_{2,\et}^{d-1,d+2}(\cF) \to E_{2,\et}^{d+1,d+1}(\cF) \to H^{2d+2}_{X_s}(X_{\et}, \cF) \to E_{2,\et}^{d,d+2}(\cF) \to 0. \]
	Using Proposition \ref{coniveau.l.adic}, we have
	\[ E_{2,\et}^{d+1,d+1}(\cF) =E_{2,\nis}^{d+1,0}(\mathcal{K}^M_{d+1, X|D}/\ell^m )=H^{d+1}_{X_s}(X_{\nis}, \mathcal{K}^M_{d+1, X|D}/\ell^m)=H^{d+1}_{X_s}(X_{\nis}, \mathcal{K}^M_{d+1, X|D})/\ell^m.\]
	Moreover combining with Theorem \ref{katoconjsncv}, we obtain
	\[ E_{2,\et}^{d-1,d+2}(\cF)=E_{2,\et}^{d-1,d+2}(\mu_{\ell^m,X}^{\otimes d+1}) = H_2(C(X_s^{\bullet},\Zlm));   \]
	\[ E_{2,\et}^{d,d+2}(\cF)=E_{2,\et}^{d,d+2}(\mu_{\ell^m,X}^{\otimes d+1}) = H_1(C(X_s^{\bullet},\Zlm)).   \]
\end{proof}
In summary, combining Theorem \ref{nis.class.to.et.class} and Theorem \ref{idele.cft} with the identification (\ref{et.fund=et.coh}), we reformulate the $\ell$-primary part of class field theory in this setting as follows.
\begin{theorem}
	There is a canonical map
	\[ \rho_{X,D}\colon C(X,D;X_s)/\ell^m \to \pi^{\mathrm{ab}}_1(U)/\ell^m, \]
	which fits into an exact sequence of finite groups
	\[ H_2(C(X_s^{\bullet},\Zlm)) \to C(X,D;X_s)/\ell^m \to  \pi^{\mathrm{ab}}_1(U)/\ell^m \to H_1(C(X_s^{\bullet},\Zlm)) \to 0.\]
	Equivalently, there is an exact sequence:
	\begin{equation}\label{SKhat.to.fund}
	H_2(C(X_s^{\bullet},\Zlm)) \to \widehat{SK}_1(U)/\ell^m \to  \pi^{\mathrm{ab}}_1(U)/\ell^m \to H_1(C(X_s^{\bullet},\Zlm)) \to 0.\end{equation}
\end{theorem}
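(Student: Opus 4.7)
The plan is to assemble three ingredients already established in the paper. First, Theorem~\ref{idele.cft}(i) gives a canonical isomorphism $C(X,D;X_s) \cong H^{d+1}_{X_s}(X_{\nis}, \mathcal{K}^M_{d+1, X|D})$, which after tensoring with $\Zlm$ identifies the idele class group with the Nisnevich cohomology term appearing in Theorem~\ref{nis.class.to.et.class}. Second, the identification~(\ref{et.fund=et.coh}) rewrites $H^{2d+2}_{X_s}(X_{\et}, \jmath_!\mu_{\ell^m}^{\otimes d+1})$ as $\pi_1^{\mathrm{ab}}(U)/\ell^m$. The map $\rho_{X,D}$ is then defined as the composition
\[
C(X,D;X_s)/\ell^m \xrightarrow{\cong} H^{d+1}_{X_s}(X_{\nis}, \mathcal{K}^M_{d+1, X|D})/\ell^m \to H^{2d+2}_{X_s}(X_{\et}, \jmath_!\mu_{\ell^m}^{\otimes d+1}) \xrightarrow{\cong} \pi_1^{\mathrm{ab}}(U)/\ell^m,
\]
where the middle arrow is the canonical morphism of Theorem~\ref{nis.class.to.et.class}. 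Substituting the outer isomorphisms into the four-term exact sequence of that theorem yields the first exact sequence of the statement.

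Next I would verify finiteness of each term. Each group in the complex $C(X_s^\bullet,\Zlm)$ is a finite $\Zlm$-module indexed by connected components of intersections of irreducible components of $X_s$, so $H_1$ and $H_2$ are finite. The group $H^{2d+2}_{X_s}(X_{\et}, \jmath_!\mu_{\ell^m}^{\otimes d+1})$ is Pontryagin dual to the finite discrete group $H^1(U_{\et},\Zlm)$ by the duality theorem preceding~(\ref{et.fund=et.coh}), so $\pi_1^{\mathrm{ab}}(U)/\ell^m$ is finite as well, and exactness of the sequence forces finiteness of $C(X,D;X_s)/\ell^m$.

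For the equivalent reformulation, I would pass to the inverse limit over all effective divisors $E$ with support $X\setminus U$. The outer terms $H_1$, $H_2$ and $\pi_1^{\mathrm{ab}}(U)/\ell^m$ do not depend on $E$. The middle terms form a projective system whose transition maps are surjective (via the analogue of Corollary~\ref{transition.surjective}, coming from the fact that on $X_{\nis}$ the quotient $\mathcal{K}^M_{d+1,X|E_2}/\mathcal{K}^M_{d+1,X|E_1}$ for $E_1\geq E_2$ is supported on $X\setminus U$, of codimension one), and whose limit is $\widehat{SK}_1(U)/\ell^m$. Because every group in the system is finite, Mittag-Leffler applies: passage to the limit preserves exactness, and the quotient by $\ell^m$ commutes with the limit. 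This yields the second exact sequence.

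The main obstacle has already been overcome inside Theorem~\ref{nis.class.to.et.class}, whose proof compares the Nisnevich and \'etale coniveau spectral sequences and invokes the Kato conjecture on simple normal crossing varieties (Theorem~\ref{katoconjsncv}) to identify the two potentially non-vanishing $E_2$-terms of the \'etale sequence with the homology of the combinatorial complex $C(X_s^\bullet,\Zlm)$. Given that input together with Theorem~\ref{idele.cft}(i) and the duality~(\ref{et.fund=et.coh}), the present theorem is a packaging exercise, with the finiteness and Mittag-Leffler steps both routine.
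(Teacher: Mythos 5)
Your construction of $\rho_{X,D}$ and the derivation of the first exact sequence are exactly the paper's argument: compose the isomorphism of Theorem~\ref{idele.cft}(i) with the canonical map of Theorem~\ref{nis.class.to.et.class} and the duality identification~(\ref{et.fund=et.coh}); your finiteness observations are correct and harmless. The gap is in your derivation of the second, ``equivalent'' sequence. You pass to the inverse limit over $E$ of the mod-$\ell^m$ sequences and then assert that ``the quotient by $\ell^m$ commutes with the limit'' because every group in the system is finite. But the groups whose inverse limit defines $\widehat{SK}_1(U)$ are the integral groups $C(X,E;X_s)\cong H^{d+1}_{X_s}(X_{\nis},\mathcal{K}^M_{d+1,X|E})$, which are not finite; only their mod-$\ell^m$ quotients are. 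Finiteness of the quotients gives Mittag--Leffler exactness of $\varprojlim_E$ on the four-term sequences and surjectivity of the natural map $(\varprojlim_E C(X,E;X_s))/\ell^m\to\varprojlim_E\bigl(C(X,E;X_s)/\ell^m\bigr)$, but not its injectivity: that would require $\ell^m\varprojlim_E C(X,E;X_s)=\varprojlim_E\,\ell^m C(X,E;X_s)$, which is not a formal consequence of anything you have stated. As written, your argument only yields a surjection from $\widehat{SK}_1(U)/\ell^m$ onto the middle term of an exact sequence, which does not establish exactness of~(\ref{SKhat.to.fund}) at $\widehat{SK}_1(U)/\ell^m$.

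The missing input is arithmetic, not formal. The paper's proof establishes the identification~(\ref{SKhat.mod.l}) via a separate Claim: the kernel of $H^{d+1}_{X_s}(X_{\nis},\mathcal{K}^M_{d+1,X|E})\to H^{d+1}_{X_s}(X_{\nis},\mathcal{K}^M_{d+1,X|D_0})$ receives a surjection from $H^{d}_{X_s}(X_{\nis},\mathcal{K}^M_{d+1,X|D_0}/\mathcal{K}^M_{d+1,X|E})$, which by Proposition~\ref{gr.log.form}(ii) is $p$-primary torsion; hence the kernel is a $\Z_{(p)}$-module and dies after $\otimes_{\Z}\Zlm$ because $\ell\neq p$. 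This shows directly that $\widehat{SK}_1(U)\otimes_{\Z}\Zlm\cong C(X,D;X_s)/\ell^m$ for any $D$ with $\Supp(D)=X\setminus U$, so the second sequence is literally the first one rewritten, and no limit of exact sequences is needed. You should replace your commutation-of-limits step by this $p$-primary torsion argument (or else supply a Mittag--Leffler-type condition on the $\ell^m$-torsion subgroups of the $C(X,E;X_s)$, which you have not established and which is not obviously available).
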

\begin{proof}\renewcommand{\qedsymbol}{}
	The map is defined by the following diagram
	\[ \xymatrix{ C(X,D;X_s)/\ell^m \ar[r]^-{\cong} \ar@{-->}[d]_{\rho_{X,D}}& H^{d+1}_{X_s}(X_{\nis}, \mathcal{K}^M_{d+1, X|D})/\ell^m  \ar[d]\\
		\pi^{\mathrm{ab}}_1(U)/\ell^m & H^{2d+2}_{X_s}(X_{\et}, \jmath_!\mu_{\ell^{m}}^{\otimes d+1}). \ar[l]_-{\cong}
	}\]
	So the first exact sequence is a direct consequence of Theorem~\ref{nis.class.to.et.class}.
	The second exact sequence results from the fact that
	\begin{equation}\label{SKhat.mod.l}
	\widehat{SK}_1(U)/\ell^m =H^{d+1}_{X_s}(X_{\nis}, \mathcal{K}^M_{d+1, X|D})/\ell^m
	\end{equation}
	for any $D$ with $\Supp(D)=X\setminus U$. Indeed, we denote $D_0=X\setminus U$ the reduced divisor, it suffices to show the following claim.\end{proof}
\begin{claim} We have
	\begin{equation*}
\normalfont	\big(\varprojlim\limits_DH^{d+1}_{X_s}(X_{\nis}, \cRM{d+1})\big) \otimes_{\Z}\Z/\ell^m\Z \xrightarrow{\cong} H^{d+1}_{X_s}(X_{\nis}, \mathcal{K}^M_{d+1,X|D_0})/\ell^m.
	\end{equation*}
\end{claim}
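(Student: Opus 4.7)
Write $A_D := H^{d+1}_{X_s}(X_{\nis}, \mathcal K^M_{d+1, X|D})$ for each effective divisor $D$ with $\Supp(D)=D_0 := X\setminus U$, and set $A := \varprojlim_D A_D$. The plan is to combine surjectivity of the transitions in the system $\{A_D\}_D$ with the observation that, for $\ell\neq p$, the mod-$\ell^m$ reduction of this system is essentially constant.

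First I would verify that for $D_1\geq D_2$ both supported on $D_0$ the transition $A_{D_1}\twoheadrightarrow A_{D_2}$ is surjective. This is the exact analogue of Corollary~\ref{transition.surjective}: applying $H^*_{X_s}(X_{\nis},-)$ to
\[ 0\to \mathcal K^M_{d+1,X|D_1}\to \mathcal K^M_{d+1,X|D_2}\to \mathcal Q\to 0,\]
one notes that $\mathcal Q$ is supported on $D_0$; since $\dim D_0=d$ and $X_s\subset D_0$, the coniveau spectral sequence on $D_0$ forces $H^{d+1}_{X_s}(X_{\nis},\mathcal Q)=0$ (there are no codim-$(d+1)$ points of $D_0$). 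By Mittag-Leffler this yields a surjection $A\twoheadrightarrow A_D$ for every $D$, whence $A/\ell^m\twoheadrightarrow A_{D_0}/\ell^m$ is surjective.

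Second I would exploit the key input of the preceding subsection: the Corollary after Proposition~\ref{coniveau.l.adic} identifies $A_D/\ell^m$ with $H^{2d+2}_{X_s}(X_{\et},\jmath_!\mu_{\ell^m,U}^{\otimes d+1})$, an object depending only on $U$. Consequently every transition $A_{D_1}/\ell^m\xrightarrow{\cong} A_{D_2}/\ell^m$ is an isomorphism, so $\varprojlim_D (A_D/\ell^m)\cong A_{D_0}/\ell^m$ and the inverse system $\{A_D/\ell^m\}$ satisfies Mittag-Leffler trivially.

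Finally, to upgrade this to the asserted isomorphism $A/\ell^m\cong A_{D_0}/\ell^m$, I would apply $R\varprojlim$ to the two-term complex $[A_D\xrightarrow{\ell^m} A_D]$ (in degrees $-1,0$). Step~1 gives $R\varprojlim A_D=A$, and the hyper-derived-limit spectral sequence produces a four-term exact sequence
\[ 0\to \varprojlim{}^1 A_D[\ell^m]\to A/\ell^m\to \varprojlim_D(A_D/\ell^m)\to 0,\]
which by step~2 reduces the claim to the vanishing $\varprojlim^1 A_D[\ell^m]=0$. The main obstacle is this last vanishing. A snake-lemma analysis of $0\to K_D^{D_0}\to A_D\to A_{D_0}\to 0$ together with the isomorphism of step~2 shows $K_D^{D_0}\subset\ell^m A_D$ and controls the cokernel of each transition $A_{D_1}[\ell^m]\to A_{D_2}[\ell^m]$ by a quotient of $K_{D_1}^{D_2}/\ell^m$. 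The finiteness of $A_{D_0}[\ell^m]$ (coming from the identification with a quotient of $\pi^{\mathrm{ab}}_1(U)/\ell^m$) should then force stabilization of these images and yield the required Mittag-Leffler property.
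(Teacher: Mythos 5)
Your route is genuinely different from the paper's, and it has a gap at the decisive last step. The paper's own argument is much shorter: it writes down the short exact sequences $0\to \ker(\varphi_D)\to H^{d+1}_{X_s}(X_{\nis},\mathcal K^M_{d+1,X|D})\to H^{d+1}_{X_s}(X_{\nis},\mathcal K^M_{d+1,X|D_0})\to 0$, passes to $\varprojlim_D$ (using surjectivity of the transitions, essentially your Step 1), and then observes that each $\ker(\varphi_D)$ is a quotient of $H^{d}_{X_s}$ of the sheaf $\mathcal K^M_{d+1,X|D_0}/\mathcal K^M_{d+1,X|D}$, which by Proposition \ref{gr.log.form}(ii) is a successive extension of coherent sheaves in characteristic $p$; hence $\ker(\varphi_D)$ is a $p$-primary torsion group, i.e.\ a $\Z_{(p)}$-module, and so is $\varprojlim_D\ker(\varphi_D)$. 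Tensoring with $\Z/\ell^m\Z$ then kills the kernel term and the claim follows from right-exactness of the tensor product. This $p$-primary input is exactly what your argument never uses, and it is what renders all $\varprojlim^1$ considerations unnecessary: since $\ell^m$ acts invertibly on $\ker(\varphi_D)$, the snake lemma gives $A_D[\ell^m]\cong A_{D_0}[\ell^m]$ for every $D$, so the torsion system is constant and trivially Mittag--Leffler.

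The concrete gap in your version is the vanishing of $\varprojlim^1 A_D[\ell^m]$. You propose to deduce it from ``the finiteness of $A_{D_0}[\ell^m]$ coming from the identification with a quotient of $\pi_1^{\mathrm{ab}}(U)/\ell^m$''. But that identification (your Step 2, via the corollary following Proposition \ref{coniveau.l.adic}) controls the cotorsion $A_{D_0}/\ell^m$, not the torsion subgroup $A_{D_0}[\ell^m]$; these are genuinely different (for instance $\bigoplus_{\mathbb N}\mathbb Q_\ell/\Z_\ell$ has trivial cotorsion and infinite $\ell^m$-torsion), and $A_{D_0}$ is a large idelic-type group with no a priori finiteness of its torsion. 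Moreover, even granting that finiteness, Mittag--Leffler must be verified for the images in $A_D[\ell^m]$ for \emph{every} $D$, where the possibly large torsion of $K_D^{D_0}=\ker(A_D\to A_{D_0})$ intervenes; your snake-lemma analysis only yields $K_D^{D_0}\subset\ell^m A_D$, which bounds neither $K_D^{D_0}[\ell^m]$ nor the failure of stabilization. The repair is to import Proposition \ref{gr.log.form}(ii) as above; once the kernels are known to be uniquely $\ell$-divisible, your Steps 2 and 3 (and the appeal to the $\ell$-adic Chern class comparison) become superfluous.
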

\begin{proof}[Proof of Claim]
	The canonical surjective map \[ \varphi_{D}\colon H^{d+1}_{X_s}(X_{\nis}, \cRM{d+1}) \to H^{d+1}_{X_s}(X_{\nis}, \mathcal{K}^M_{d+1,X|D_0})\]
	fits into the exact sequence
	\begin{equation}\label{ker.mod.l}
	\xymatrix{  0\ar[r] &\ker(\varphi_D) \ar[r]&H^{d+1}_{X_s}(X_{\nis}, \cRM{d+1}) \ar[r]^-{\varphi_D} &H^{d+1}_{X_s}(X_{\nis}, \mathcal{K}^M_{d+1,X|D_0}) \ar[r]&0
	}
	\end{equation}
	Applying $\varprojlim_D$ to the above exact sequence, we obtain an exact sequence
	\begin{equation}\label{exact.limit.l}
	\xymatrix@C=0.5cm{  0\ar[r]& \varprojlim\limits_D\ker(\varphi_D) \ar[r]&\varprojlim\limits_DH^{d+1}_{X_s}(X_{\nis}, \cRM{d+1}) \ar[r] &H^{d+1}_{X_s}(X_{\nis}, \mathcal{K}^M_{d+1,X|D_0}) \ar[r]&0.
	}
	\end{equation}

	By the long exact sequence associated to the short exact sequence
	$$0\to \cRM{d+1} \to  \mathcal{K}^M_{d+1,X|D_0} \to \mathcal{K}^M_{d+1,X|D_0}/\cRM{d+1}\to 0,$$ we see that $H^d_{X_s}(X_{\nis},\mathcal{K}^M_{d+1,X|D_0}/\cRM{d+1} ) \twoheadrightarrow \ker(\varphi_D) $ is surjective. Proposition \ref{gr.log.form}(ii) tells us that $H^d_{X_s}(X_{\nis},\cRM{d+1}/\mathcal{K}^M_{d+1,X|D_0} )$ is $p$-primary torsion group, therefore  in particular $\ker(\varphi_D)$ is a $\Z_{(p)}$-module, so is the inverse limit $\varprojlim_D\ker(\varphi_D)$. It follows that
	\[ \Z/\ell^m\Z\otimes_{\Z}\varprojlim_D\ker(\varphi_D)=0.\]
	Tensoring the exact sequence (\ref{exact.limit.l}) with $\Z/\ell^m\Z$, we obtain the claim.
\end{proof}

In the case that $\Supp(D)=X_s$, we have the following diagram:
\[\xymatrix{ & SK_1(X_{\eta})/\ell^m \ar[d]^{\phi} \ar[r]^{\rho_{X_{\eta}}}&\pi^{\mathrm{ab}}_1(X_{\eta})/\ell^m \ar@{=}[d] &&\\
	H_2(C(X_s^{\bullet},\Zlm)) \ar[r] &\widehat{SK}_1(X_{\eta})/\ell^m \ar[r]^{\rho_{X,X_s}}&  \pi^{\mathrm{ab}}_1(X_{\eta})/\ell^m \ar[r]& H_1(C(X_s^{\bullet},\Zlm)) \ar[r] &0,
}\]
where the last row is the exact sequence (\ref{SKhat.to.fund}), the morphism $\rho_{X_{\eta}}$ is the reciprocity map of variety over the local field $K$ (cf.\cite{katosaitounramified}), and the  map $\phi$ is induced by the connection map $H^{d}(X_{\eta}, \mathcal{K}^M_{d+1, X_{\eta}}) \to H^{d+1}_{X_s}(X_{\nis}, \mathcal{K}^M_{d+1, X|D})$.

In the remainder of this subsection, we explain why our new approach recovers the known result for varieties over local fields (cf.\cite{forre15}) in the good reduction case.
\begin{theorem}\label{compare.l.adic.SK}
	If $\Supp(D)=X_s$ is smooth, then the map $\phi\colon SK_1(X_{\eta})/\ell^m \to \widehat{SK}_1(X_{\eta})/\ell^m$ is an isomorphism.
\end{theorem}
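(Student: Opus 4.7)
The plan is to extract the theorem from the commutative diagram displayed immediately before the statement, which sandwiches $\phi$ between two reciprocity maps with common target $\pi^{\mathrm{ab}}_1(X_\eta)/\ell^m$.

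First I would verify that in the good reduction case the higher Kato homology of the special fibre vanishes. Since $X_s$ is smooth, its components are disjoint smooth divisors, so $X_s^{[1]} = X_s$ while $X_s^{[s]}=\emptyset$ for all $s \geq 2$. Hence the complex $C(X_s^\bullet, \Z/\ell^m\Z)$ is concentrated in degree $0$ and equals $(\Z/\ell^m\Z)^{\pi_0(X_s)}$, so $H_i(C(X_s^\bullet, \Z/\ell^m\Z)) = 0$ for every $i \geq 1$. Feeding this vanishing into the four-term exact sequence~(\ref{SKhat.to.fund}) of the preceding theorem collapses it to an isomorphism $\rho_{X,X_s}\colon \widehat{SK}_1(X_\eta)/\ell^m \xrightarrow{\cong} \pi^{\mathrm{ab}}_1(X_\eta)/\ell^m$.

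On the other hand, by Forr\'e's theorem~\cite{forre15} (extending the case $d=1$ of Bloch and Saito~\cite{saito85cft}) applied to the smooth proper variety $X_\eta$ over the local field $K$ with smooth reduction model $X$, the classical unramified reciprocity map $\rho_{X_\eta}\colon SK_1(X_\eta)/\ell^m \xrightarrow{\cong} \pi^{\mathrm{ab}}_1(X_\eta)/\ell^m$ is an isomorphism for $\ell \neq p$. Combining this with the previous step, commutativity of the pre-theorem diagram forces $\phi = \rho_{X,X_s}^{-1}\circ \rho_{X_\eta}$ to be an isomorphism, which is exactly the desired conclusion.

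The one substantive point to verify is the commutativity of that diagram, i.e.\ the identity $\rho_{X_\eta} = \rho_{X,X_s}\circ \phi$. Both reciprocity maps are constructed from Galois symbols at closed points of $X_\eta$, so the compatibility should follow from naturality of the local symbol with respect to the connecting morphism $H^d(X_{\eta,\nis}, \mathcal{K}^M_{d+1, X_\eta}) \to H^{d+1}_{X_s}(X_{\nis}, \mathcal{K}^M_{d+1, X|D})$, together with compatibility of the various \'etale duality identifications constructed in the preceding subsection with those used by Forr\'e. Pinning down this naturality statement at the level of residues is where the real work lies; once it is in place the remainder of the proof is a formal diagram chase.
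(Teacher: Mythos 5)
Your route is genuinely different from the paper's, and it has a real gap at exactly the point you yourself flag. The paper's proof of Theorem~\ref{compare.l.adic.SK} never touches the reciprocity maps or the displayed diagram: it reduces to $D=X_s$, identifies $\mathcal{K}^M_{d+1,X|X_s}/\ell^m$ with the top cohomology sheaf of the relative motivic complex $\Lambda(d+1)_{X|X_s}$ via rigidity (Proposition~\ref{motivic.class.group}), proves the vanishing $H^{2d+1}(X_{\nis},\Lambda(d+1)_{X|X_s})=0$ by comparing higher Chow groups of $X$ and $X_s$ with \'etale cohomology and invoking proper base change (Proposition~\ref{vanishing.motivic.2d+1}), and then reads off that $\phi$ is an isomorphism from the localization sequence. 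The only external input is the Kato conjecture in the form already used throughout the paper.

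Your argument instead sandwiches $\phi$ between $\rho_{X_\eta}$ and $\rho_{X,X_s}$. The first half is fine: when $X_s$ is smooth one has $X_s^{[s]}=\emptyset$ for $s\geq 2$, so $H_i(C(X_s^\bullet,\Z/\ell^m\Z))=0$ for $i\geq 1$ and the sequence \eqref{SKhat.to.fund} collapses to show $\rho_{X,X_s}$ is an isomorphism; this uses only the paper's own results. The problems are in the second half. First, the commutativity $\rho_{X_\eta}=\rho_{X,X_s}\circ\phi$ is not a formality: $\rho_{X,X_s}$ is manufactured from the coniveau spectral sequence, Sato's localized Chern classes and Geisser's duality trace, whereas $\rho_{X_\eta}$ is the classical symbol map of Kato--Saito; matching the two trace normalizations and checking compatibility with the connecting map is precisely the verification the paper's motivic argument is designed to avoid, and you leave it unproven. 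Second, quoting \cite{forre15} for the statement that $\rho_{X_\eta}$ is an isomorphism mod $\ell^m$ in the good reduction case inverts the logic of this subsection: the paper states that Theorem~\ref{compare.l.adic.SK} is the step by which the new approach \emph{recovers} Forr\'e's result, so assuming that result here is circular relative to the paper's aims; moreover Forr\'e's theorem describes the kernel and cokernel of $\rho_{X_\eta}$ in terms of Kato homology of a chosen regular model, so even used as a black box it would require checking that his hypotheses and his model match the $X$ fixed here. As written, the proposal does not give an independent proof of the statement.
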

To proof this theorem, we may further assume that $D=X_s$, since the multiplicity of $D$
has no contribution to $\widehat{SK}_1(X_{\eta})/\ell^m$.  To simplify our notations, we
denote $\Lambda(i)_Y\coloneqq\Z/\ell^m\Z\otimes \Z(i)_Y$ for a scheme $Y$ and $i\in \Z$,
where $\Z(i)$ is Bloch's cycle complex on the small Nisnevich site
(cf. \cite{geissermotivic}).

We can define the restriction map $r_i\colon \Lambda(i)_X\to i_*\Lambda(i)_{X_s}$ as the composition
\[ \Lambda(i)_{X}\to j_*\Lambda(i)_{X_{\eta}} \xrightarrow{\cdot \pi} j_*\Lambda(i+1)_{X_{\eta}}[1] \to  i_*\Lambda(i)_{X_s}, \]
where the middle arrow is given by multiplication by $\pi$, and the last arrow is the localization map.

Let
\[ \Lambda(i)_{X|X_s}\coloneqq \mathrm{hofib}(r_i\colon \Lambda(i)_X\to i_*\Lambda(i)_{X_s}) \]
be the homotopy fiber of $r_i$.
By rigidity~\cite[Thm.~1.2.(3)]{geissermotivic} we get an isomorphism $j_! \Lambda
(i)_{X_\eta} \cong \Lambda(i)_{X|X_s}$. Notice that we also have an analogous isomorphism
$j_! \mathcal K^M_{i,X_\eta}/\ell^m \cong  \mathcal K^M_{i,X|X_s}/\ell^m$. So we conclude:

\begin{proposition} \label{motivic.class.group}
	There is a canonical isomorphism $$\mathcal K^M_{i,X|X_s}/\ell^m \cong \mathcal
	H^i(\Lambda(i)_{X|X_s})$$ and $ \mathcal H^j(\Lambda(i)_{X|X_s})=0 $ for $j>i$.
\end{proposition}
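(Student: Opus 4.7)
The plan is to reduce both assertions to classical properties of Bloch's cycle complex on the smooth $K$-scheme $X_\eta$, exploiting the already-established rigidity isomorphism $\Lambda(i)_{X|X_s} \cong j_!\Lambda(i)_{X_\eta}$ together with the parallel Milnor $K$-theoretic isomorphism $j_!\mathcal K^M_{i,X_\eta}/\ell^m \cong \mathcal K^M_{i,X|X_s}/\ell^m$ noted just before the proposition.

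First I would observe that, since $j\colon X_\eta \hookrightarrow X$ is an open immersion, the extension-by-zero functor $j_!$ is exact on Nisnevich abelian sheaves and hence commutes with taking cohomology sheaves of complexes. Combined with the rigidity identification this gives
\[
\mathcal H^j(\Lambda(i)_{X|X_s}) \;\cong\; \mathcal H^j(j_!\Lambda(i)_{X_\eta}) \;\cong\; j_!\,\mathcal H^j(\Lambda(i)_{X_\eta})
\]
for every integer $j$, reducing everything to a computation on the generic fibre.

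Next, $X_\eta$ is smooth over $K$, whose residue characteristic is $p\neq \ell$, so $\ell$ is invertible on $X_\eta$. By the Beilinson-Lichtenbaum conjecture (now a theorem, via Voevodsky), one has $\Lambda(i)_{X_\eta} \cong \tau_{\leq i} R\epsilon_*\mu_{\ell^m}^{\otimes i}$ where $\epsilon\colon (X_\eta)_{\et}\to (X_\eta)_{\nis}$, and in particular $\mathcal H^j_{\nis}(\Lambda(i)_{X_\eta}) = 0$ for $j>i$. Together with the display above, this yields the vanishing part of the claim. For the top cohomology sheaf, the Nesterenko-Suslin-Totaro theorem supplies a canonical isomorphism $\mathcal H^i_{\nis}(\Lambda(i)_{X_\eta}) \cong \mathcal K^M_{i,X_\eta}/\ell^m$. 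Applying $j_!$ and using the cited isomorphism $j_!\mathcal K^M_{i,X_\eta}/\ell^m \cong \mathcal K^M_{i,X|X_s}/\ell^m$ then produces the desired identification $\mathcal H^i(\Lambda(i)_{X|X_s}) \cong \mathcal K^M_{i,X|X_s}/\ell^m$.

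I do not expect a real obstacle here: once rigidity has been invoked, both claims are immediate consequences of well-established properties of motivic cohomology on smooth varieties over a field in which $\ell$ is invertible. The only point requiring a moment of care is checking that the canonical isomorphism produced in this way agrees with the natural map $\mathcal K^M_{i,X|X_s}/\ell^m \to \mathcal H^i(\Lambda(i)_{X|X_s})$, but this is a matter of tracing through the functoriality of the $j_!$-identification and of the Nesterenko-Suslin-Totaro comparison map, both of which are canonical.
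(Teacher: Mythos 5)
Your proposal is correct and follows essentially the same route as the paper, which simply records the rigidity isomorphism $j_!\Lambda(i)_{X_\eta}\cong\Lambda(i)_{X|X_s}$ and its Milnor $K$-theoretic analogue and then ``concludes''; the details you supply (exactness of $j_!$ so that it commutes with cohomology sheaves, vanishing of $\mathcal H^j(\Lambda(i)_{X_\eta})$ for $j>i$ via Beilinson--Lichtenbaum, and the Nesterenko--Suslin--Totaro identification $\mathcal H^i(\Lambda(i)_{X_\eta})\cong\mathcal K^M_{i,X_\eta}/\ell^m$) are exactly the intended, standard ingredients left implicit there.
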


Note that Proposition~\ref{motivic.class.group} implies that the canonical map
\begin{equation}\label{iso.relmot}
H^{2d+2}_{X_s}(X_{\nis}, \Lambda(d+1)_{X|X_s}) \xrightarrow{\cong} H^{d+1}_{X_s}(X_{\nis}, \mathcal{K}^M_{d+1, X|X_s})/\ell^m
\end{equation}
is an isomorphism.

To finish the proof of Theorem \ref{compare.l.adic.SK}, we also need the following result:
\begin{proposition}\label{vanishing.motivic.2d+1}
	The group $\normalfont H^{2d+1}(X_{\nis}, \Lambda(d+1)_{X|X_s})=0$.
\end{proposition}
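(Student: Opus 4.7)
My plan is to reduce the vanishing to a computation in étale cohomology via Beilinson--Lichtenbaum and proper base change for the henselian base.

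First, I would establish the quasi-isomorphism
\begin{equation*}
\Lambda(d+1)_{X|X_s} \simeq \tau_{\leq d+1}R\epsilon_*(j_{\et,!}\mu_{\ell^m}^{\otimes d+1})
\end{equation*}
in the derived category of Nisnevich sheaves on $X$, where $\epsilon\colon X_\et\to X_\nis$ is the change-of-topology morphism and $j_{\et,!}$ denotes extension by zero along $j\colon X_\eta\hookrightarrow X$ on the étale sites. By Proposition~\ref{motivic.class.group} together with the rigidity identification $\Lambda(d+1)_{X|X_s}\cong j_!\Lambda(d+1)_{X_\eta}$, both complexes have cohomology sheaves concentrated in degrees $[0,d+1]$, and on $X_\eta$ Beilinson--Lichtenbaum (Voevodsky--Rost) gives $\Lambda(d+1)_{X_\eta}\simeq \tau_{\leq d+1}R\epsilon_{\eta,*}\mu_{\ell^m}^{\otimes d+1}$. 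The only additional point to check is that the Nisnevich stalks of $R^q\epsilon_*(j_{\et,!}\mu_{\ell^m}^{\otimes d+1})$ vanish on $X_s$; for $x\in X_s$ the stalk is $H^q(V_\et, j'_!\mu^{\otimes d+1})$ with $V=\mathrm{Spec}(\mathcal O^h_{X,x})$ henselian and $j'\colon V_\eta\hookrightarrow V$, and the henselian-local property forces $H^*(V_\et, j'_!\mu^{\otimes d+1})=H^*(V_s,0)=0$.

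Second, applying proper base change to $X$ proper over $\mathrm{Spec}\,R$ (with $R$ henselian) and using $(j_{\et,!}\mu_{\ell^m}^{\otimes d+1})|_{X_s}=0$, I obtain
\begin{equation*}
H^j(X_\et, j_{\et,!}\mu_{\ell^m}^{\otimes d+1}) = H^j(X_{s,\et}, i^* j_{\et,!}\mu_{\ell^m}^{\otimes d+1}) = 0 \quad \text{for every } j.
\end{equation*}
The truncation triangle $\tau_{\leq d+1}R\epsilon_*\to R\epsilon_*\to \tau_{>d+1}R\epsilon_*$ then yields
\begin{equation*}
H^{2d+1}(X_\nis, \Lambda(d+1)_{X|X_s}) \cong H^{2d}(X_\nis, \tau_{>d+1}R\epsilon_*(j_{\et,!}\mu_{\ell^m}^{\otimes d+1})).
\end{equation*}

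Third, I analyze $R^q\epsilon_*(j_{\et,!}\mu_{\ell^m}^{\otimes d+1})$ for $q>d+1$. By Gabber rigidity the Nisnevich stalk at $x\in X_\eta^a$ equals $H^q(\kappa(x),\mu_{\ell^m}^{\otimes d+1})$, and the bound $\mathrm{cd}_\ell(\kappa(x))\le d-a+2$ (exploiting that $K$ is a local field of cohomological dimension $2$) shows that these stalks vanish for $q>d+2$ and, for $q=d+2$, only the generic point $\eta_X$ of $X$ contributes. Thus $\tau_{>d+1}R\epsilon_*(j_{\et,!}\mu_{\ell^m}^{\otimes d+1}) \simeq R^{d+2}\epsilon_*(j_{\et,!}\mu_{\ell^m}^{\otimes d+1})[-d-2]$, and the proposition reduces to
\begin{equation*}
H^{d-2}(X_\nis, R^{d+2}\epsilon_*(j_{\et,!}\mu_{\ell^m}^{\otimes d+1})) = 0.
\end{equation*}
The main obstacle will be this last vanishing: since the sheaf is supported at the single point $\eta_X$ and has trivial global sections over proper $X$ (again by $H^{d+2}(X_\et, j_{\et,!}\mu^{\otimes d+1})=0$ from proper base change), I expect it to be handled by a coniveau spectral sequence on $X$ combined with the Kato-conjecture inputs from~\cite{kerzsaitoIHES, jskatohomology} already used in Proposition~\ref{ladic.top} and Proposition~\ref{coniveau.l.adic}(i). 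The delicate step is verifying that the coniveau differentials conspire to kill the relevant $E_\infty$-term, using that higher Galois cohomology of the residue fields in the generic row satisfies the required Kato-type vanishing.
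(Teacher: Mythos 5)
Your steps 1--3 are correct: the identification $\Lambda(d+1)_{X|X_s}\simeq\tau_{\leq d+1}R\epsilon_*(j_{\et,!}\mu_{\ell^m}^{\otimes d+1})$ (via rigidity, Beilinson--Lichtenbaum on $X_\eta$, and the vanishing of the stalks at points of $X_s$ over the henselian local rings), the vanishing $H^*(X_{\et},j_{\et,!}\mu_{\ell^m}^{\otimes d+1})=0$ by proper base change over the henselian base, and the cohomological-dimension analysis showing that $\tau_{>d+1}R\epsilon_*(j_{\et,!}\mu_{\ell^m}^{\otimes d+1})$ is a single sheaf $\mathcal G$ in degree $d+2$ supported at the generic point. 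But the proof is not complete: everything has been traded for the assertion $H^{d-2}(X_{\nis},\mathcal G)=0$, which you explicitly leave as ``the main obstacle'' and only expect to follow from Kato-conjecture inputs. This is a genuine gap, not a routine verification. A Nisnevich sheaf whose stalks vanish at all non-generic points is \emph{not} automatically acyclic (it is only a subsheaf of the pushforward from the generic point), and none of the results you cite applies to it directly: Proposition~\ref{ladic.top} concerns smooth varieties over finite fields, Proposition~\ref{coniveau.l.adic}(i) concerns the coniveau row with supports in $X_s$ (whose homology gives the groups $H_1$ and $H_2$ of the Kato complex of $X_s$, which are obstruction terms in this paper, not known vanishing). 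What you would actually need is (a) an identification of $H^{d-2}(X_{\nis},\mathcal G)$ with a homology group of the ``generic row'' $q=d+2$ of the coniveau spectral sequence for $j_{\et,!}\mu_{\ell^m}^{\otimes d+1}$ on the arithmetic scheme $X$ itself, i.e.\ with a Kato homology group of the total space $X$ over the discrete valuation ring, and (b) the exactness of that Kato complex in the relevant spot. Both are nontrivial; (b) is the subject of \cite{jskatohomology} and is a strictly different (and in general stronger) input than anything invoked elsewhere in this section. Also, your parenthetical claim that $\mathcal G$ has trivial global sections does not follow from $H^{d+2}(X_{\et},j_{\et,!}\mu_{\ell^m}^{\otimes d+1})=0$: in the Leray spectral sequence this only shows that the differentials out of $E_2^{0,d+2}$ are jointly injective.

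For comparison, the paper's proof never isolates this generic-row term. It uses the defining fiber sequence $\Lambda(d+1)_{X|X_s}\to\Lambda(d+1)_X\to i_*\Lambda(d+1)_{X_s}$ and shows that the restriction maps $H^{2d}(X_{\nis},\Lambda(d+1)_X)\to H^{2d}(X_{s,\nis},\Lambda(d+1)_{X_s})$ and $H^{2d+1}(X_{\nis},\Lambda(d+1)_X)\to H^{2d+1}(X_{s,\nis},\Lambda(d+1)_{X_s})$ are isomorphisms, by identifying both sides with \'etale cohomology via the cycle class maps of \cite[Theorem 9.3]{kerzsaitoIHES} (for $X$ and for $X_s$) and then applying proper base change for $\mu_{\ell^m}^{\otimes d+1}$; the long exact sequence then gives the vanishing. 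This stays entirely within the Beilinson--Lichtenbaum range, where the comparison with \'etale cohomology is already available, and is why the delicate term you are left with simply does not appear. If you want to salvage your route, you should either prove (a) and (b) above, or observe that your steps 1--3 combined with the paper's argument show \emph{a posteriori} that $H^{d-2}(X_{\nis},\mathcal G)=0$ --- but as written the argument is circular at exactly the point that matters.
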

\begin{proof}
	By the definition of $\Lambda(d+1)_{X|X_s}$, there is a long exact sequence
	\begin{multline*}
	H^{2d}(X_{\nis}, \Lambda(d+1)_{X}) \xrightarrow{\alpha} H^{2d}(X_{s,\nis}, \Lambda(d+1)_{X_s}) \to  H^{2d+1}(X_{\nis}, \Lambda(d+1)_{X|X_s})\\ \to H^{2d+1}(X_{\nis}, \Lambda(d+1)_{X}) \xrightarrow{\beta} H^{2d+1}(X_{s,\nis}, \Lambda(d+1)_{X_s}).
	\end{multline*}
	It suffices to show that $\alpha$ is surjective and $\beta$ is injective. In fact, using the relation between motivic cohomology and higher Chow groups, we will show that both $\alpha$ and $\beta$ are isomorphisms. More precisely, the fact that $\alpha$ is an isomorphism follows from the diagram:
	\[ \xymatrix{ H^{2d}(X_{\nis},\Lambda(d+1)_{X}) \ar@{=}[r]\ar[d]^-{\alpha} &CH^{d+1}(X,2;\Z/\ell^m\Z) \ar[r]^-{\cong} & H^{2d}(X_{\et}, \mu_{\ell^m}^{\otimes d+1}) \ar[d]^-{\cong}\\
		H^{2d}(X_{s,\nis}, \Lambda(d+1)_{X_s})\ar@{=}[r] &CH^{d+1}(X_s,2;\Z/\ell^m\Z) \ar[r]^-{\cong} & H^{2d}(X_{s,\et}, \mu_{\ell^m}^{\otimes d+1}),
	}\]
	where the equalities in the rows are the definitions of higher Chow groups with coefficients in $\Z/\ell^m\Z$ (cf. \cite{geisserlevineBlochKato}), the two horizontal arrows are isomorphisms by the known Kato conjecture (\cite[Theorem 9.3]{kerzsaitoIHES}), and the right vertical is the proper base change theorem (SGA4$\frac{1}{2}$,\cite[Arcata IV]{SGA41/2}). The assertion for $\beta$ are similar:
	\[ \xymatrix{ H^{2d+1}(X_{\nis},\Lambda(d+1)_{X}) \ar@{=}[r]\ar[d]^-{\beta} &CH^{d+1}(X,1;\Z/\ell^m\Z) \ar[r]^-{\cong} & H^{2d+1}(X_{\et}, \mu_{\ell^m}^{\otimes d+1}) \ar[d]^-{\cong}\\
		H^{2d+1}(X_{s,\nis}, \Lambda(d+1)_{X_s})\ar@{=}[r] &CH^{d+1}(X_s,2;\Z/\ell^m\Z) \ar[r]^-{\cong} & H^{2d+1}(X_{s,\et}, \mu_{\ell^m}^{\otimes d+1}).
	}\]
\end{proof}
\begin{proof}[Proof of Theorem \ref{compare.l.adic.SK}]
	The assertion  follows directly from the diagram:
	\[ \xymatrix@C=5mm{
		H^{2d+1}(X_{\nis}, \Lambda(d+1)_{X|X_s}) \ar[r] \ar@{=}[d]^-{Prop. \ref{vanishing.motivic.2d+1}}& H^{2d+1}(X_{\eta,\nis}, \Lambda(d+1)_{X_{\eta}}) \ar[r]\ar[d]^-{\cong}& H^{2d+2}_{X_s}(X_{\nis}, \Lambda(d+1)_{X|X_s}) \ar[r]\ar[d]^-{\cong}&0\\
		0 &SK_1(X_\eta)/\ell^m \ar[r]^{\phi}& \widehat{SK}_1(X_{\eta})/\ell^m &
	}\]
	where the first row is the exact localization sequence, note that
	$j^*\Lambda(d+1)_{X|X_s}=\Lambda(d+1)_{X_{\eta}}$. The first vertical isomorphism is given by~\eqref{iso.relmot}
	and the second vertical isomorphism is given by Proposition \ref{motivic.class.group} and~\eqref{SKhat.mod.l}.
\end{proof}

\yigengrem{}
\subsection{The $p$-primary part: equi-characteristic}
Due to the lack of ramified duality in the mixed characteristic case for $p$-primary sheaves, we only treat the case that $R=\Fq[[t]]$ in this subsection and assume $X_s$ is reduced.
In \cite{zhaoduality}, we proved the following duality theorem for the relative logarithmic de Rham-Witt sheaves in this setting.

\begin{theorem}[{\cite[Theorem 3.4.2]{zhaoduality}}]\label{semistable.ramified.duality} Let $R=\Fq[[t]]$.
	There is a perfect pairing of topological abelian groups
	\[ \normalfont H^i(U_{\et},\DRWlog{m}{r}{U}) \times \varprojlim\limits_{E} H^{d+2-i}_{X_s}(X_{\et}, \DRWlog m {d+1-r} {X|E}) \to H^{d+2}_{X_s}(X_{\et}, \DRWlog m {d+1} X) \xrightarrow{\mathrm{Tr}} \Z/p^m\Z ,\]
	where the inverse limit runs over the set of effective divisors $D$ such that $\mathrm{Supp}(D) \subset X-U$. The first group is endowed with the discrete topology, and the second is with profinite topology.
\end{theorem}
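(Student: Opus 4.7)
The strategy follows the same blueprint as the proof of Theorem~\ref{ramified.duality} but now in the semistable setting over $R = \mathbb{F}_q[[t]]$; the paper explicitly refers to \cite{zhaoduality}, so the plan is to reconstruct that argument. First I would construct the pairing: cup product gives
\[
\DRWlog{m}{r}{U} \otimes^L \jmath^* \DRWlog{m}{d+1-r}{X|E} \to \DRWlog{m}{d+1}{U},
\]
and adjunction for $(\jmath_!, \jmath^*)$ together with the trace isomorphism $\mathrm{Tr}\colon H^{d+2}_{X_s}(X_{\et}, \DRWlog{m}{d+1}{X}) \isom \Z/p^m\Z$ (purity for the logarithmic de Rham--Witt sheaf on the regular scheme $X$ with smooth reduced special fiber $X_s$) yield the continuous pairing to $\Z/p^m\Z$. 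The resulting map
\[
\Phi^{i,r}_m\colon H^i(U_{\et}, \DRWlog{m}{r}{U}) \to \varinjlim_E \Hom_{\Zpm}\bigl(H^{d+2-i}_{X_s}(X_{\et}, \DRWlog{m}{d+1-r}{X|E}),\, \Zpm\bigr)
\]
is what must be shown to be an isomorphism.

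Next, I would dévissage to $m=1$. The short exact sequences
\[
0 \to \DRWlog{m-1}{r}{U} \xrightarrow{p} \DRWlog{m}{r}{U} \xrightarrow{R} \DRlog{r}{U} \to 0
\]
and the $[E/p]$-version
\[
0 \to \DRWlog{m-1}{d+1-r}{X|[E/p]} \xrightarrow{p} \DRWlog{m}{d+1-r}{X|E} \xrightarrow{R} \DRlog{d+1-r}{X|E} \to 0
\]
(exactness from the relative analogue of \cite[Theorem 1.1.6]{jszduality}) reduce the statement by induction on $m$ to the case $m=1$, exactly as in Theorem~\ref{ramified.duality}.

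For $m=1$ I would pass from logarithmic to coherent differentials via the Cartier exact sequences relating $\DRlog{r}{U}$ to $Z\DR{r}{U}$ and $\DR{r}{U}$, and on the other side relating $\DRlog{d+1-r}{X|E}$ to $\DR{d+1-r}{X|E}/d\DR{d-r}{X|E}$ and $\DR{d+1-r}{X|E}$, where $\DR{d+1-r}{X|E} = \DR{d+1-r}{X}(\log E_{\mathrm{red}}) \otimes \cO_X(-E)$. Because $H^i_{X_s}(X,\cdot)$ of a coherent sheaf vanishes outside the ``Cohen--Macaulay range'' (and by projecting $U$ is affine, so $H^{\geq 1}$ vanishes on the other side), the only interesting indices are $i=0,1$. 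Writing $H^0(U, \DR{r}{U}) = \varinjlim_E H^0(X, \DR{r}{X}(\log E_{\mathrm{red}}) \otimes \cO_X(E))$, the proof then reduces to a $4$-term commutative diagram of coherent dualities, and the key input is a Grothendieck--Serre local duality statement with support in $X_s$ applied to the coherent sheaves $\DR{d+1-r}{X}(\log E_{\mathrm{red}}) \otimes \cO_X(-E)$, identifying $\Omega^{d+1}_X$ as a dualizing sheaf for the pair $(X, X_s)$ since $R = \Fq[[t]]$ is a complete regular local ring and $X \to \Spec R$ is proper of relative dimension $d$.

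The hard part is this last step: in the ``absolute'' setting of Theorem~\ref{ramified.duality} one invokes Grothendieck local duality over the complete local base $A$ (Theorem~\ref{grothendieck.local}) at the closed point. In the relative setting one needs the analogue for $X$ proper over $\Spec(\Fq[[t]])$ with support $X_s$, for which $\Omega^{d+1}_X$ must be shown to be dualizing and the trace map $H^{d+2}_{X_s}(X,\Omega^{d+1}_X) \to \Z/p\Z$ must be constructed compatibly with the residue map and $\mathrm{Tr}_{k/\Fp}$. Once this coherent duality is in hand, chasing the $4$-term diagram termwise (the inner two vertical maps being instances of coherent duality and hence isomorphisms) forces the outer vertical arrows—namely $\Phi^{0,r}_1$ and $\Phi^{1,r}_1$—to be isomorphisms as well, completing the reduction.
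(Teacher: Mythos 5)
The paper gives no proof of this statement at all---it is imported verbatim from \cite[Theorem 3.4.2]{zhaoduality}---and your reconstruction follows exactly the strategy of that reference and of the parallel Theorem~\ref{ramified.duality} proved in this paper: cup-product pairing plus trace, d\'evissage to $m=1$ via the $[E/p]$-twisted exact sequences, Cartier-type reduction to coherent sheaves, and a coherent (Grothendieck--Serre) duality with supports over the base $\Fq[[t]]$. The one caveat is that in this subsection $X_s$ is only assumed reduced with simple normal crossing support, not smooth, so the trace isomorphism $H^{d+2}_{X_s}(X_{\et},W_m\Omega^{d+1}_{X,\log})\cong\Z/p^m\Z$ rests on purity for logarithmic Hodge--Witt sheaves on normal crossing varieties (Sato) rather than the smooth-fiber purity you invoke; this point is handled in the cited reference.
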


For $r=0, i=1$, we get
\[  H^1(U_{\et},\Z/p^m\Z) \cong \varinjlim_E\Hom(H^{d+1}_{X_s}(X_{\et}, \DRWlog m {d+1} {X|E}), \Z/p^m\Z).\]

Similar to Corollary \ref{transition.surjective}, the transition maps are surjective in the projective limit, for our divisor $D$ we define

\[ \Fil_D H^1(U_{\et},\Z/p^m\Z) :=\Hom(H^{d+1}_{X_s}(X_{\et}, \cRFr{d+1}), \Z/p^m\Z);\]
by Pontryagin duality, we also define
\[   \pi^{\text{ab}}_1(X,D)/p^m:= \Hom(\Fil_D H^1(U_{\et},\Z/p^m\Z), \Z/p^m\Z).\]

Therefore Theorem \ref{semistable.ramified.duality} gives us an isomorphism
\[  H^{d+1}_{X_s}(X_{\et}, \cRFr{d+1}) \xrightarrow{\cong} \pi^{\text{ab}}_1(X,D)/p^m.\]

As before we want to compare the group $H^{d+1}_{X_s}(X_{\nis}, \cRFr{d+1})$ with  $H^{d+1}_{X_s}(X_{\et}, \cRFr{d+1})$, by using the coniveau spectral sequence.

For any abelian sheaf $\cF$ on $X_{\nis}$ or $X_{\et}$, we have the following two coniveau spectral sequences:
\[  E^{p,q}_{1,\et}(\cF):=\bigoplus_{x\in X^p\cap X_s} H^{p+q}_x(X_{\et}, \cF)\Longrightarrow H^{p+q}_{X_s}(X_{\et}, \cF) \]
\[  E^{p,q}_{1,\nis}(\cF):=\bigoplus_{x\in X^p \cap X_s} H^{p+q}_x(X_{\nis}, \cF)\Longrightarrow H^{p+q}_{X_s}(X_{\nis}, \cF) .\]

\begin{proposition}\label{etandnis}
	We have the following isomorphisms:
	\begin{itemize}
		\item [(i)] $\normalfont E^{\bullet,1}_{1,\et}(\cRFr{d+1}) \cong E^{\bullet,1}_{1,\et}(\DRWlog{m}{d+1}{X})$;
		\item [(ii)] $ \normalfont E^{\bullet,0}_{1,\nis}(\cRFr{d+1}) \cong E^{\bullet,0}_{1,\et}(\cRFr{d+1})$.
	\end{itemize}
\end{proposition}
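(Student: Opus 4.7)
The argument follows the template of Propositions~\ref{E1acyclic} and~\ref{E0nis=et}, adapted to the present setting in which we restrict to points $x \in X^a \cap X_s$. Since we are in equi-characteristic, for each such $x$ the henselization $\mathcal{O}_{X,x}^h$ is a henselian regular local ring of characteristic $p$, so the local results of Section~1 (in particular Proposition~\ref{gr.log.form}) apply at $x$.

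For part~(i), I would argue by induction on the codimension $a \geq 1$ that the inclusion $\cRFr{d+1} \hookrightarrow \DRWlog{m}{d+1}{X}$ induces an isomorphism
\[
H^{a+1}_x(X_{\et}, \cRFr{d+1}) \xrightarrow{\cong} H^{a+1}_x(X_{\et}, \DRWlog{m}{d+1}{X}).
\]
For the base case $a=1$, any divisor on $X_x := \Spec(\mathcal{O}_{X,x}^h)$ supported in $\{x\}$ is trivial, so the two sheaves agree on $Y_x := X_x \setminus \{x\}$; a double application of the localization exact sequence then reduces the claim to the vanishing $H^1(X_{x,\et}, \DRWlog{m}{d+1}{X}/\cRFr{d+1}) = 0$. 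This vanishing is a consequence of Proposition~\ref{gr.log.form}(ii): the quotient admits a filtration whose graded pieces are coherent $\mathcal{O}_{D_\nu}^{p^e}$-modules, whose higher cohomology on the henselian local scheme $X_x$ vanishes. For the inductive step, I would combine the coniveau spectral sequence on $Y_x$ with the localization isomorphism $H^a(Y_{x,\et},-) \cong H^{a+1}_x(X_{x,\et},-)$ (using the vanishing of $H^\ast(X_{x,\et},-)$ and $H^\ast(x_{\et},-)$ in the relevant degrees), then apply the induction hypothesis together with the five lemma to conclude.

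For part~(ii), I would first reduce to the case that $D$ is reduced: by Proposition~\ref{gr.log.form}(ii), the quotient $\cRFr{d+1}/W_m\Omega^{d+1}_{X|D_{\mathrm{red}},\log}$ is a successive extension of coherent sheaves, for which local Nisnevich and \'etale cohomology agree. Since $U$ lies inside the generic fibre, we have $\mathrm{Supp}(D) \supseteq X_s$; the analog of Proposition~\ref{gr.log.form}(i) in top degree (with the same dimension argument, since $(d+1)$-forms restricted to the $d$-dimensional strata $D_\nu$ are zero) identifies $W_m\Omega^{d+1}_{X|D_{\mathrm{red}},\log}$ with the absolute sheaf $\DRWlog{m}{d+1}{X}$. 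It remains to show
\[
H^a_x(X_{\nis}, \DRWlog{m}{d+1}{X}) \xrightarrow{\cong} H^a_x(X_{\et}, \DRWlog{m}{d+1}{X}),
\]
which follows because both sides compute $K^M_{d+1-a}(k(x))/p^m \cong W_m\Omega^{d+1-a}_{k(x),\log}$, using the Gersten resolution of \cite{kerzgersten} on the Nisnevich side, the purity theorem of \cite{shihopurity,zhaoduality} on the \'etale side, and the Bloch--Gabber--Kato theorem to compare.

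The main obstacle is arranging the correct local purity statement at points $x \in X^a \cap X_s$ in the semistable setting, where $\mathcal{O}_{X,x}^h$ is regular but not smooth over $\Fq$. This is precisely the technical input supplied by the duality machinery of \cite{zhaoduality} invoked throughout Section~\ref{sec:cftschemeslocal}; given that input, both parts of the proposition reduce to essentially formal diagram chases modelled on Section~\ref{sec:cftvarieties}.
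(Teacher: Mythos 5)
Your proposal is correct and follows essentially the same route as the paper, which simply remarks that the question is local and that (i) and (ii) follow by the same arguments as Propositions~\ref{E1acyclic} and~\ref{E0nis=et} respectively; you have merely spelled out those arguments, including the correct substitution of Shiho's purity for regular schemes in place of Milne's purity for smooth varieties.
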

\begin{proof}
	This is a local question. The first claim follows by the same argument as in Proposition
	\ref{E1acyclic}, and the second as in Proposition \ref{E0nis=et}.
\end{proof}

By purity~\cite[Theorem 3.2]{shihopurity}
the complex $E^{\bullet,1}_{1,\et}(\DRWlog{m}{d+1}{X})$ is isomorphic to the Kato complex
$C^{1,0}(X_s, \Zpm(d))$ (up to a shift), i.e.~to
\begin{multline*}
\bigoplus_{y\in X_s^0}H^{d+1}_y(X_{s,\et}, \Zpm(d)) \to \bigoplus_{y\in X_s^1}H^{d+2}_y(X_{s,\et},\Zpm(d)) \to \cdots \\
\cdots \to \bigoplus_{y\in X_s^a}H^{d+a+1}_y(X_{s,\et}, \Zpm(d))\to \cdots \to \bigoplus_{y\in X_s^d} H^{2d+1}_y(X_{s,\et}, \Zpm(d)) ,
\end{multline*}
where $\Zpm(d)=\DRWlognc{m}{d}{X_s}[-d]$ and where the last term is placed in degree $0$.

\begin{theorem}\label{nis.to.et}
	The canonical map  \[ H^{d+1}_{X_s}(X_{\nis}, \cRFr{d+1})  \to  H^{d+1}_{X_s}(X_{\et}, \cRFr{d+1}) \] fits into an exact sequence of finite groups
	\begin{equation}\label{kernel.cokernel.rec}
	\normalfont \footnotesize H_2(C(X_s^{\bullet},\Zpm)) \to \footnotesize H^{d+1}_{X_s}(X_{\nis}, \cRFr{d+1}) \to  \footnotesize H^{d+1}_{X_s}(X_{\et}, \cRFr{d+1}) \to \footnotesize H_1(C(X_s^{\bullet},\Zpm)) \to 0
	\end{equation}
\end{theorem}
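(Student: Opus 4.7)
The plan is to run a variant of the argument of Theorem \ref{nis.class.to.et.class}, replacing the $\ell$-adic inputs (Proposition \ref{coniveau.l.adic} and the Kato conjecture with $\mu_{\ell^m}$-coefficients) by their $p$-primary analogs (Proposition \ref{etandnis} together with Shiho's purity and the Kato conjecture for the Kato complex on $X_s$). I would start with the étale coniveau spectral sequence
$$E^{p,q}_{1,\et}(\cRFr{d+1}) \Longrightarrow H^{p+q}_{X_s}(X_{\et}, \cRFr{d+1}),$$
in which the only potentially nonzero rows are $q=0$ and $q=1$, and the column $p$ ranges over $[1,d+1]$ since points of $X_s$ have codimension between $1$ and $d+1$ in $X$.

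A direct differential chase in this bounded range shows that, for total degree $p+q=d+1$, the only relevant differential is $d_2 : E^{d-1,1}_{2,\et} \to E^{d+1,0}_{2,\et}$, while $d_2 : E^{d,1}_{2,\et} \to E^{d+2,0}_{2,\et}$ is zero because $E^{d+2,0}_{2,\et}$ vanishes (columns $p>d+1$ are empty), and all higher $d_r$ with $r\geq 3$ vanish for bidegree reasons. Combining this with the two-step abutment filtration on $H^{d+1}_{X_s}(X_{\et}, \cRFr{d+1})$ produces a four-term exact sequence
$$E^{d-1,1}_{2,\et} \to E^{d+1,0}_{2,\et} \to H^{d+1}_{X_s}(X_{\et}, \cRFr{d+1}) \to E^{d,1}_{2,\et} \to 0.$$

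It then remains to identify the three $E_2$-terms. By Proposition \ref{etandnis}(ii) the $q=0$ row coincides on the étale and Nisnevich sites, so $E^{d+1,0}_{2,\et}(\cRFr{d+1}) \cong E^{d+1,0}_{2,\nis}(\cRFr{d+1})$; since the Nisnevich coniveau spectral sequence has only the $q=0$ row it degenerates, and this group equals $H^{d+1}_{X_s}(X_{\nis}, \cRFr{d+1})$. For the two flanking terms, Proposition \ref{etandnis}(i) identifies the complex $E^{\bullet,1}_{1,\et}(\cRFr{d+1})$ with $E^{\bullet,1}_{1,\et}(\DRWlog{m}{d+1}{X})$, and Shiho's purity \cite[Theorem 3.2]{shihopurity} identifies the latter, up to a shift, with the Kato complex $C^{1,0}(X_s, \Zpm(d))$ recalled immediately above the theorem; matching column indices shows that $E^{d-1,1}_{2,\et}$ and $E^{d,1}_{2,\et}$ compute the Kato homologies $H_2(C^{1,0}(X_s, \Zpm(d)))$ and $H_1(C^{1,0}(X_s, \Zpm(d)))$ respectively. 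Finally, Theorem \ref{katoconjsncv}(iv) at $a=1,2$ replaces these by the purely combinatorial $H_2(C(X_s^{\bullet},\Zpm))$ and $H_1(C(X_s^{\bullet},\Zpm))$; finiteness of all terms then follows from finiteness of the Kato homologies of the proper simple-normal-crossings variety $X_s$ over the finite field $k$.

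The main obstacle will be pinning down the shift and sign conventions so that the purity identification of $E^{\bullet,1}_{1,\et}(\DRWlog{m}{d+1}{X})$ with $C^{1,0}(X_s, \Zpm(d))$ sends column $d-1$ (resp.\ $d$) in the coniveau complex to the homological degree $2$ (resp.\ $1$) in the Kato complex. Once this dictionary is fixed, the rest of the argument is a formal transposition of the $\ell$-primary proof of Theorem \ref{nis.class.to.et.class}.
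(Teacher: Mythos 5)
Your proposal is correct and follows essentially the same route as the paper's proof: the two-row \'etale coniveau spectral sequence with supports in $X_s$ yields the four-term exact sequence, Proposition \ref{etandnis}(ii) identifies the $E_2^{d+1,0}$-term with the Nisnevich group, and Proposition \ref{etandnis}(i) together with Shiho's purity and Theorem \ref{katoconjsncv} identifies the flanking terms with the Kato homology groups $H_2$ and $H_1$ of $C(X_s^{\bullet},\Zpm)$. The only difference is that you spell out the differential chase and the degree-matching that the paper leaves implicit.
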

\begin{proof}
	By the coniveau spectral sequence for $\cF=\cRFr{d+1}$ on $X_{\et}$, we have the following exact sequence
	\[  E_{2,\et}^{d-1,1}(\cF) \to E_{2,\et}^{d+1,0}(\cF) \to H^{d+1}_{X_s}(X_{\et}, \cF) \to E_{2,\et}^{d,1}(\cF) \to 0.\]
	By Proposition \ref{etandnis}, we have \[ E_{2,\et}^{d+1,0}(\cF) =E_{2,\nis}^{d+1,0}(\cF)=H^{d+1}_{X_s}(X_{\nis}, \cF).\]
	Moreover combining with Theorem \ref{katoconjsncv}, we obtain
	\[ E_{2,\et}^{d-1,1}(\cRFr{d+1})=E_{2,\et}^{d-1,1}(\DRWlog{m}{d+1}{X}) = H_2(C(X_s^{\bullet},\Zpm));   \]
	\[ E_{2,\et}^{d,1}(\cRFr{d+1})=E_{2,\et}^{d,1}(\DRWlog{m}{d+1}{X}) = H_1(C(X_s^{\bullet},\Zpm)).   \]
\end{proof}

\begin{remark}
	In particular, if $X$ has good reduction, then
	\[ H^{d+1}_{X_s}(X_{\nis}, \cRFr{d+1})  \cong  H^{d+1}_{X_s}(X_{\et}, \cRFr{d+1}). \]
\end{remark}

The $p$-primary part of class field theory in this setting can be reformulated as follows:
\begin{theorem}\label{thm.cftlocalmodp}
	There is a canonical map
	\[ \rho_{X,D}\colon C(X,D;X_s)/p^m \to \pi^{\mathrm{ab}}_1(X,D)/p^m, \]
	which fits into an exact sequence of finite groups
	\[ H_2(C(X_s^{\bullet},\Zpm)) \to C(X,D;X_s)/p^m \to  \pi^{\mathrm{ab}}_1(X,D)/p^m \to H_1(C(X_s^{\bullet},\Zpm)) \to 0.\]
	In particular, we have
	\[ H_2(C(X_s^{\bullet},\Zpm)) \to \varprojlim_D (C(X,D;X_s)/p^m) \to  \pi^{\mathrm{ab}}_1(U)/p^m \to H_1(C(X_s^{\bullet},\Zpm)) \to 0.\]
\end{theorem}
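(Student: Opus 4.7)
The plan is to construct $\rho_{X,D}$ as the composition of four identifications and then extract the exact sequence from the one in Theorem~\ref{nis.to.et}.  The skeleton is
\[
C(X,D;X_s)/p^m \;\stackrel{(\mathrm{i})}{\cong}\; H^{d+1}_{X_s}(X_{\nis}, \mathcal{K}^M_{d+1, X|D})/p^m \;\stackrel{(\mathrm{ii})}{\cong}\; H^{d+1}_{X_s}(X_{\nis}, \cRFr{d+1}) \;\stackrel{(\mathrm{iii})}{\to}\; H^{d+1}_{X_s}(X_{\et}, \cRFr{d+1}) \;\stackrel{(\mathrm{iv})}{\cong}\; \pi^{\mathrm{ab}}_1(X,D)/p^m,
\]
where (i) is Theorem~\ref{idele.cft}(i), (iii) is the middle map of Theorem~\ref{nis.to.et}, and (iv) is the isomorphism coming out of Theorem~\ref{semistable.ramified.duality}.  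Step (ii) is the semi-local analogue of Corollary~\ref{mod.pm}, and I expect this to be the main technical point.

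For (ii) I would proceed exactly as in Corollary~\ref{mod.pm}.  The long exact sequence attached to multiplication by $p^m$ on $\mathcal{K}^M_{d+1, X|D}$ gives
\[
H^{d+1}_{X_s}(X_{\nis}, \mathcal{K}^M_{d+1, X|D})/p^m \cong H^{d+1}_{X_s}(X_{\nis}, \mathcal{K}^M_{d+1, X|D}/p^m),
\]
because $H^{d+2}_{X_s}(X_{\nis},-)$ vanishes on Nisnevich sheaves: the support spectral sequence $E_1^{p,q}=\bigoplus_{x\in X^p\cap X_s} H^{p+q}_x(X_{\nis},-)$ is concentrated in $q=0$, and $X_s$ contributes only points of codimension $\leq d+1$ in $X$.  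Next the surjection $\mathcal{K}^M_{d+1,X|D}/p^m \twoheadrightarrow \mathcal{K}^M_{d+1,X|D}/(p^m\mathcal{K}^M_{d+1,X}\cap\mathcal{K}^M_{d+1,X|D})$ has kernel supported on $D$, which is of dimension $d$, so the associated $H^{d+1}_{X_s}$ vanishes (since a sheaf supported on $D\supset X_s$ has local cohomology with support in $X_s$ computed on $D$, whose Krull dimension is $d$).  Finally Theorem~\ref{relative.blochkato.general}, which is local on $X_\nis$ and extends to our setting because the Gersten conjecture applies, identifies the target with $\cRFr{d+1}$.

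Having (i)--(iv), I would plug the Nisnevich--to--\'etale map of Theorem~\ref{nis.to.et} into the composition.  Since both endpoints of (iii) are replaced by the groups $C(X,D;X_s)/p^m$ and $\pi_1^{\mathrm{ab}}(X,D)/p^m$ via isomorphisms, the four-term exact sequence of Theorem~\ref{nis.to.et} transports verbatim to
\[
H_2(C(X_s^{\bullet},\Zpm)) \to C(X,D;X_s)/p^m \to \pi^{\mathrm{ab}}_1(X,D)/p^m \to H_1(C(X_s^{\bullet},\Zpm)) \to 0,
\]
which is the required sequence and also defines $\rho_{X,D}$.

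For the ``in particular'' claim one passes to the inverse limit over all effective divisors $D$ with $\Supp(D)=X\setminus U$.  The Kato-complex homology groups $H_i(C(X_s^{\bullet},\Zpm))$ are independent of $D$, so it suffices to check Mittag-Leffler for the middle terms: the transition maps $C(X,D_1;X_s)/p^m \to C(X,D_2;X_s)/p^m$ for $D_1\geq D_2$ are surjective by the same argument as in Corollary~\ref{transition.surjective} applied to $\cRFr{d+1}$ and its quotients (whose supports lie in divisors of dimension $d$).  On the target side, Pontryagin duality turns the definition $\Fil_D H^1(U_\et,\Z/p^m\Z)$ into $\pi^{\mathrm{ab}}_1(U)/p^m = \varprojlim_D \pi^{\mathrm{ab}}_1(X,D)/p^m$ since $H^1(U_\et,\Z/p^m\Z)=\varinjlim_D \Fil_DH^1(U_\et,\Z/p^m\Z)$.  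Taking the limit then yields the announced exact sequence.
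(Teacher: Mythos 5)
Your proposal is correct and follows essentially the same route as the paper: the paper defines $\rho_{X,D}$ by precisely the composition (i)--(iv) you describe, citing Theorem~\ref{idele.cft}, an analogue of Corollary~\ref{mod.pm} for step (ii), Theorem~\ref{nis.to.et}, and Theorem~\ref{semistable.ramified.duality}. Your elaboration of step (ii) (vanishing of $H^{d+2}_{X_s}$ by codimension, killing the kernel supported on the $d$-dimensional divisor $D$, then Theorem~\ref{relative.blochkato.general}) and of the limit argument for the ``in particular'' claim fills in details the paper leaves implicit, and both are sound.
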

\begin{proof}
	The map is defined by the following composition:
	\[ \xymatrix{ C(X,D;X_s)/p^m \ar[r]^-{\cong} \ar@{-->}[dr]_{\rho_{X,D}}& H^{d+1}_{X_s}(X_{\nis}, \mathcal{K}^M_{d+1, X|D})/p^m \ar[r]^-{\cong} & H^{d+1}_{X_s}(X_{\nis}, \cRFr{d+1}) \ar[d]\\
		&\pi^{\mathrm{ab}}_1(X,D)/p^m & H^{d+1}_{X_s}(X_{\et}, \cRFr{d+1}) \ar[l]_-{\cong}
	}\]
	where the second isomorphism in the upper row is obtained in analogy to the proof of
	Corollary~\ref{mod.pm}. Theorem~\ref{thm.cftlocalmodp} now is a consequence of Theorem \ref{nis.to.et}, Theorem \ref{idele.cft} and Theorem \ref{semistable.ramified.duality}.
\end{proof}

\providecommand{\bysame}{\leavevmode\hbox to3em{\hrulefill}\thinspace}
\providecommand{\MR}{\relax\ifhmode\unskip\space\fi MR }
\providecommand{\MRhref}[2]{
	\href{http://www.ams.org/mathscinet-getitem?mr=#1}{#2}
}
\providecommand{\href}[2]{#2}

\end{document}